\newcommand{\nc}{\newcommand}
\numberwithin{equation}{section}
\newtheorem{thm}{Theorem}[section]
\newtheorem*{thm*}{Theorem}
\newtheorem*{thma}{Theorem A}
\newtheorem*{thmb}{Theorem B}
\newtheorem*{thmc}{Theorem C}
\newtheorem{prop}[thm]{Proposition}
\newtheorem{lem}[thm]{Lemma}
\newtheorem{cor}[thm]{Corollary}
\theoremstyle{remark}
\newtheorem{rem}[thm]{Remark}
\newtheorem{definition}[thm]{Definition}
\newtheorem{example}[thm]{Example}
\newtheorem{dfn}[thm]{Definition}
\nc{\gl}{\mathfrak{gl}}
\nc{\GL}{\mathsf{GL}}
\nc{\g}{\mathfrak{g}}
\nc{\gh}{\widehat\g}
\nc{\h}{\mathfrak{h}}
\nc{\la}{\lambda}
\nc{\al}{\alpha }
\nc{\be}{\beta }
\nc{\ve}{\varepsilon }
\nc{\om}{\omega }
\nc{\ta}{\theta}
\nc{\ch}{{\mathop {\rm ch}}}
\nc{\Tr}{{\mathop {\rm Tr}\,}}
\nc{\Id}{{\mathop {\rm Id}}}
\nc{\ad}{{\mathop {\rm ad}}}
\nc{\bra}{\langle}
\nc{\ket}{\rangle}
\nc{\pa}{\partial}
\nc{\ld}{\ldots}
\nc{\cd}{\cdots}
\nc{\hk}{\hookrightarrow}
\nc{\T}{\otimes}
\nc{\gr}{\mathrm{gr}}
\nc{\ov}{\overline}
\nc{\cO}{\mathcal O}
\nc{\msl}{\mathfrak{sl}}
\nc{\mgl}{\mathfrak{gl}}
\nc{\U}{\mathrm U}
\nc{\V}{\EuScript V}
\nc{\cL}{\mathcal{L}}
\newcommand{\bC}{{\mathbb C}}
\newcommand{\bK}{{\mathbb C}}
\newcommand{\bZ}{{\mathbb Z}}
\newcommand{\bP}{{\mathbb P}}
\newcommand{\bW}{{\mathbb W}}
\newcommand{\fh}{{\mathfrak h}}
\newcommand{\fa}{{\mathfrak a}}
\newcommand{\fg}{{\mathfrak g}}
\newcommand{\fb}{{\mathfrak b}}
\newcommand{\fn}{{\mathfrak n}}
\newcommand{\A}{\EuScript{A}}
\newcommand{\bx}{{\bf x}}
\newcommand{\by}{{\bf y}}
\newcommand{\eO}{\EuScript{O}}
\newcommand{\Hom}{\mathrm{Hom}}
\nc{\cat}{\mathcal{C}}
\nc{\RepGL}[2]{{\mathsf{Rep}}(\GL_{#1})^{(#2)}}
\nc{\RepGLt}[2]{{\mathsf{Rep}}(\mgl_{#1}[t])^{(#2)}}
\nc{\RepSL}[2]{{\mathsf{Rep}}(\SL_{#1})^{(#2)}}
\nc{\RepSLt}[2]{{\mathsf{Rep}}(\msl_{#1}[t])^{(#2)}}
\nc{\Rep}{\mathsf{Rep}}
\nc{\de}{{\text -}}
\begin{document}

\title[Peter-Weyl, Howe and Schur-Weyl theorems for current groups]
{Peter-Weyl, Howe and Schur-Weyl theorems for current groups}

\author{Evgeny Feigin}
\address{Evgeny Feigin:\newline
Department of Mathematics,\newline
HSE University, Moscow, Usacheva str. 6, 119048, Russia,\newline
{\it and }\newline
Skolkovo Institute of Science and Technology, Skolkovo Innovation Center, Building 3,
Moscow 143026, Russia
}
\email{evgfeig@gmail.com}

\author{Anton Khoroshkin}
\address{Anton Khoroshkin:\newline
Department of Mathematics,\newline
HSE University, Moscow, Usacheva str. 6, 119048, Russia,\newline
{\it and }\newline
Institute for Theoretical and Experimental Physics, Moscow 117259, Russia;
}
\email{akhoroshkin@hse.ru}

\author{Ievgen Makedonskyi}
\address{Ievgen Makedonskyi:\newline
 JSPS International Research Fellow;
Department of Mathematics, Kyoto University, Oiwake,
Kita-Shirakawa, Sakyo Kyoto 606--8502, Japan}
\email{makedonskii\_e@mail.ru}

\begin{abstract}
The classical Peter-Weyl theorem describes the structure of the space of functions on a semi-simple
algebraic group. On the level of characters (in type A) this boils down to the Cauchy identity for
the products of Schur polynomials. We formulate and prove the analogue of the Peter-Weyl theorem
for the current groups. In particular, in type A the corresponding characters identity is governed by
the Cauchy identity for the products of q-Whittaker functions. We also formulate and prove a version of the
Schur-Weyl theorem for current groups. The link between the Peter-Weyl and Schur-Weyl theorems is provided
by the (current version of) Howe duality.
\end{abstract}

\maketitle

\section*{Introduction}
In this paper we formulate and prove current versions of three classical theorems from representation theory:
the (algebraic version of the) Peter-Weyl theorem, the Howe duality and the Schur-Weyl theorem. Let us recall the setup.

Let $G$ be an algebraic semi-simple simply-connected group over $\bC$ and let $\bC[G]$ be the space of algebraic functions on $G$.
Then $\bC[G]$ is naturally endowed with the commuting $G \times G$ action coming from the bimodule structure. The celebrated
Peter-Weyl theorem (see e.g. \cite{GW,TY}) states that $\bC[G]$ is isomorphic as $G\times G$ module to the direct sum
$\bigoplus_{\la\in P_+} V_\la\T V_\la^*$, where the sum is taken over the set $P_+$ of dominant integral weights $\lambda$ and
$V_\la$ is the irreducible $G$-module of highest weight $\la$.

The Howe duality \cite{Howe} describes the structure of the space of functions on the tensor product $V\T U$ with $\dim V=m$,
$\dim U=n$. More precisely, the Howe duality says that $S(V\T U)$ is isomorphic to the direct sum $V_\la\T U_\la$
of irreducible $\mgl_m$ and $\mgl_n$ modules ($\la$ runs over the set of partitions of length at most $\min(m,n)$).
Hence the Howe duality can be seen as an analogue of the Peter-Weyl theorem in the special type $A$ situation
(note however that $m, n$ might be different).

The celebrated Schur-Weyl duality states that the tensor power $V^{\T n}$ of the tautological (vector) representation of
$\mgl_m$ enjoys the decomposition into the direct sum of the tensor products $V_\lambda \otimes \mathbb{S}_\lambda$, where
$\mathbb{S}_\lambda$ is the Specht module over the symmetric group $\mathfrak{S}_n$ and $\la$ runs over the set of partitions
of length no greater than $\dim V$. It was shown by Howe that the Schur-Weyl theorem can be derived from the Howe duality.

In order to state our theorems let us introduce some notation.
Let $\eO=\bC[[t]]$ and let $G(\eO)$ be the corresponding current group over the  ring of formal series in one variable (see e.g.
\cite{Kum1}, section 13.2). Then the space of functions $\bC[G(\eO)]$ carries natural $G(\eO)\de G(\eO)$ bimodule structure.
It is natural to ask what is the structure of the space of functions on the current group as $G(\eO)\de G(\eO)$ bimodule.
In order to state the answer we recall the notion of the global and local Weyl modules.

Let $\fg$ be the Lie algebra of $G$ and let $\fg[t]=\fg\T\bK[t]$ be the corresponding current Lie algebra.
Let $\la$ be a dominant integral weight of $\fg$. Then the global Weyl module $\bW_\la$ (see \cite{CFK,FMO}) is a cyclic $\fg[t]$ module with
cyclic vector $w_\la$ such that $\fn_+[t] w_\la=0$ (where $\fn_+\subset\fg$ is the positive nilpotent subalgebra of $\fg$) and
$\U(\fg\T 1)w_\la\simeq V_\la$ (where $\U(\fg)$ denotes the universal enveloping algebra). The module $\bW_\la$ is
known to be infinite dimensional. However, its "size" is controlled by the local Weyl module $W_\la$, which is the quotient of
$\bW_\la$ by the relations $\fh\T t\bC[t] w_\la=0$, where $\fh\subset\fg$ is the Cartan subalgebra.  The modules $W_\la$ are finite
dimensional; in particular, if $\fg$ is of type $A$, then $W_\la$ is isomorphic as a $\fg\T 1$-module to a tensor product of
several fundamental $\fg$-modules. The connection between the local and global Weyl modules is governed by certain
commutative (infinite-dimensional) algebra $\A_\la$, which is isomorphic (as a vector space) to the weight $\la$ subspace of
$\bW_\la$ (see e.g. \cite{FL2}). More precisely, the algebra $\A_\la$ acts freely on $\bW_\la$, commutes with the action of the
current algebra $\fg[t]$ and the quotient is isomorphic to
$W_\la$; in particular, the ratio of the (graded) characters of the global and local Weyl modules is equal to
the graded character of $\A_\la$. It is worth mentioning that global/local Weyl modules as well as the algebras $\A_\la$ have a very clear categorical meaning in terms of a highest weight category (see recollections in Section~\S\ref{sec::HWC} and references therein).

Now we are in position to state our first main theorem.
\begin{thma}{\rm (Theorem~\ref{mainth} below)}
The dual space of functions on the group $G(\eO)$ admits a filtration such that the associated graded space is isomorphic
to the direct sum
\[
\bigoplus_{\la\in P_+}  \bW_\la\T_{\A_\la} \bW_{\la}^o.
\]
\end{thma}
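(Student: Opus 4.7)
The plan is to construct the filtration via matrix coefficients of global Weyl modules, and to identify the associated graded using the highest weight category formalism recalled in \S\ref{sec::HWC}. I would first realize $\bC[G(\eO)] = \varinjlim_N \bC[G_N]$ with $G_N = G(\bC[t]/t^N)$, so that the dual $\bC[G(\eO)]^* = \varprojlim_N \bC[G_N]^*$ carries a natural $\fg[t]$-bimodule structure coming from the distribution algebras. For each $\la \in P_+$ there is a matrix coefficient map
\[
m_\la \colon \bW_\la \T \bW_\la^o \to \bC[G(\eO)]^*, \qquad v \T f \longmapsto \bigl(g \mapsto f(g \cdot v)\bigr),
\]
which descends to $\bW_\la \T_{\A_\la} \bW_\la^o$ because $\A_\la$ acts on $\bW_\la$ commuting with $\fg[t]$ while the opposite action is built into $\bW_\la^o$.

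Next, I would fix a linear refinement of the dominance order on $P_+$ and define $F_{\le\mu} \subset \bC[G(\eO)]^*$ to be the span of the images of $m_\la$ for all $\la \le \mu$; this yields an ascending $\fg[t]$-bimodule filtration. By construction $F_{\le\mu}/F_{<\mu}$ is a quotient of $\bW_\mu \T_{\A_\mu} \bW_\mu^o$. The theorem then amounts to the two statements: (i) this quotient map is an isomorphism, and (ii) $\bigcup_\mu F_{\le\mu} = \bC[G(\eO)]^*$, understood in the appropriate graded/topological sense. For (ii) I would pass to the direct limit presentation: on each truncation $G_N$ one is reduced to the finite-dimensional statement that every distribution is a combination of matrix coefficients of standardly filtered $\fg[t]/t^N\fg[t]$-modules, which follows from the fact that the Weyl modules generate the appropriate highest weight subcategory.

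\textbf{The main obstacle} is the injectivity in (i). In the classical Peter-Weyl setting this step is Schur orthogonality; here the global Weyl modules are not simple and $\mathrm{End}_{\fg[t]}(\bW_\la) = \A_\la$. The tensor product over $\A_\la$ is precisely designed to absorb this endomorphism algebra, so that distinct elements of $\bW_\mu \T_{\A_\mu} \bW_\mu^o$ yield genuinely distinct matrix coefficients modulo those of strictly smaller weights. My strategy would be to exploit the highest weight category axioms --- the Ext-orthogonality between standards and costandards, together with the appropriate $\Hom$ computation --- and to deduce, in a BGG-reciprocity style argument, that the regular $\fg[t]$-bimodule has a filtration with associated graded as claimed. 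The essentially nontrivial input is the translation of these abstract categorical statements into a concrete statement about the distribution algebra $\bC[G(\eO)]^*$, which in turn requires a careful identification of $\bC[G(\eO)]^*$ with the ``group-algebra'' object of the highest weight category.
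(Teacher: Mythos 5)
Your proposal rests on a step that the paper explicitly shows cannot work: the matrix coefficient map for global Weyl modules. Classically, a matrix coefficient $v\otimes\xi\mapsto (g\mapsto \xi(gv))$ requires pairing a module $M$ with its dual $M^*$, and it lands in $\bC[G]$, not in $\bC[G]^*$. Here $\bW_\la^o$ is \emph{not} the restricted dual of $\bW_\la$ — the paper remarks right after Lemma~\ref{Wopp} that $\bW_{\la^*}\simeq\bW_\la^o$ is cyclic while $\bW_\la^*$ is cocyclic, so the two are very different. Hence $f(g\cdot v)$ for $f\in\bW_\la^o$ does not define a function, and your map $m_\la$ is not well posed. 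Even setting this aside, the introduction explicitly states that \emph{``the matrix coefficients of the global Weyl modules do not span the whole space of functions on the group $G(\eO)$''} — the approach that makes the exhaustion claim (ii) easy is precisely the one that fails here.

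The paper's actual route is quite different. One embeds $\bC[G]^*\hookrightarrow\bC[G(\eO)]^*$ (dual to the restriction map from constant loops) and uses the \emph{classical} Peter-Weyl decomposition $\bC[G]^*=\bigoplus_\la V_\la\T V_\la^o$ to seed the filtration: $F_\la=\U(\fn_-[t])\,\bC[G]^*_{\ge\la}\,\U(\fb[t])$, a decreasing filtration by the dominance order. The surjection $T_\la\twoheadrightarrow\gr F_\la$ is verified directly by checking the cyclic-bimodule relations of Lemma~\ref{relationsT}, the crucial one being $h t^k\bar w_\la=\bar w_\la h t^k$, proved by a small computation with the loop-rotation grading. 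The exhaustion statement (your step (ii)) is genuinely nontrivial and is not a soft direct-limit observation: it is a geometric argument exploiting that $\exp(t\fn_-[t])\,G\,\exp(t\fb_+[t])$ is open dense in $G(\eO)$ (over the open Bruhat cell of $G$), which forces any function annihilated by this two-sided action to have loop degree zero. Finally, the isomorphism (your step (i)) is not obtained via Ext-orthogonality applied to the distribution algebra; instead the paper compares graded $G\times G\times\bC^*$ characters: $\bC[G(\eO)]\simeq\bigoplus_\la P_\la\T V_\la^o$ as a $G\times G$-module (Proposition~\ref{func}), and BGG reciprocity converts this to $\bigoplus_\la\bW_\la\T W_\la^o$ (Lemma~\ref{PW}), whose character equals $\sum_\la\ch_q T_\la$ by the freeness of $\A_\la$. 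Since one already has a surjection $T_\la\to\gr F_\la$ at every step and the total characters agree, those surjections are isomorphisms. Your intuition that BGG reciprocity is essential is correct, but it enters as a character count, not as a structural identification of $\bC[G(\eO)]^*$ with a regular object in a highest weight category — the latter identification, which you flag as the hard part, is circumvented entirely.
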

Here $\bW_{\la}^o$ is the right Weyl module (see Chapter \ref{WeylModules}).
It is isomorphic to the left Weyl module $\bW_{\la^*}$ as the Hopf algebra module, where
$\la^*=-w_0\la$. So
there is the natural vector space isomorphism:
\[
\bigoplus_{\la\in P_+}  \bW_\la\T_{\A_\la} \bW_{\la}^o\simeq \bigoplus_{\la\in P_+}  \bW_\la\T_{\A_\la} \bW_{\la^*}.
\]
We note that in contrast with the classical situation the matrix coefficients of the global Weyl modules do
not span the whole space of functions on the group $G(\eO)$.

% and $A_\la$ acts on $\bW_{\la}^o$ via the identification of
%$A_\la$ with the lowest weight space of $\bW_\la^*$.

%Recall that the Peter-Weyl theorem has an analogue for the algebra of functions on the space of matrices -- the Howe duality \cite{Howe}.
%Namely, the group $GL_n\times GL_m$ naturally acts  on the space of $n\times m$ matrices ${\rm Mat}_{n,m}$.
%Then the space of functions $\bC[{\rm Mat}_{n,m}]$ is isomorphic
%as $GL_n\times GL_m$-module to the direct sum $\bigoplus V_\la\T V_\la^*$, where the sum is taken over all weights $\la$
%of the form $\la=(\la_1\ge\dots\ge \la_n\ge 0)$ (we assume $n\le m$).

Now let us describe the current version of the Howe duality.
Recall that the character of $V_\la$ is given by the
Schur function $s_\la$. Hence the Howe duality on the level of characters boils down to the celebrated Cauchy identity:
\[
\prod_{i=1}^n\prod_{j=1}^m (1-x_iy_j)^{-1}=\sum_{\la=(\la_1\ge\dots\ge\la_n\ge 0)} s_\la(x_1,\dots,x_n)s_\la(y_1,\dots,y_m).
\]
Recall that the $q$-Whittaker functions $p_{\la}(\bx,q)$, $\bx=(x_1,\dots,x_n)$ are certain polynomials in $x_i$ and $q$
(see \cite{BF1,E,I,GLO1,GLO2})).
These polynomials are $t=0$ specializations of the Macdonald polynomials. The $q$-Whittaker functions
enjoy many nice
properties; in particular, they satisfy the following generalized Cauchy identity (which is the special case of the
similar identity for Macdonald polynomials, see \cite{M}):
\[
\prod_{i=1}^n\prod_{j=1}^m \prod_{k\ge 0}(1-x_iy_jq^k)^{-1}=
\sum_{\la=(\la_1\ge\dots\ge\la_n\ge 0)} p_\la(\bx,q)p_\la(\by,q)\prod_{a=1}^n (q)^{-1}_{\la_a-\la_{a+1}},
\]
where $(q)_r=\prod_{l=1}^r (1-q^l)$ and we assume $n\le m$,  $\la_{n+1}=0$ (see \cite{BP,BC} for the description
and properties of the corresponding measure
on the set of partitions, which generalizes the Schur measure of \cite{Ok,OR}). The left hand side of the Cauchy identity for the
$q$-Whittaker functions can be interpreted as the character of the space of algebraic functions on the space $V\T U\T\bC[[t]]$.
We prove the following theorem.
\begin{thmb}{\rm (Theorem~\ref{HoweCurrent} below)}
There exists a filtration on the dual space of functions on the space $V\T U\T\bC[[t]]$ such that the associated graded space is isomorphic
as $\mgl_V[t]\de \mgl_U[t]$ bimodule to the direct sum
\[
\bigoplus_{\la=(\la_1\ge\dots\ge \la_n\ge 0)}  \bW_\la(V)\T_{\A_\la} \bW_{\la}^o(U).
\]
\end{thmb}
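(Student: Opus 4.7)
The plan is to parallel Theorem~A in the concrete geometric setting of $V\T U\T\eO$; write $W=V\T U$ for brevity, so that $\bC[W\T\eO]$ is a polynomial algebra bigraded by $(\mgl_V\oplus\mgl_U)$-weight and $t$-degree, and $\bC[W\T\eO]^*$ carries a natural $\mgl_V[t]\de\mgl_U[t]$-bimodule structure. The filtration I would use is dual to ``order of vanishing along $W\T t\eO$'': that is, I filter $\bC[W\T\eO]^*$ by the subspaces of distributions that factor through the finite-dimensional jet quotients $W\T(\eO/t^k\eO)$. Both current Lie algebras preserve this filtration because each element $X\T t^j$ sends $W\T t^i\eO$ into $W\T t^{i+j}\eO$, and the associated graded picks up the classical Howe action on the lowest piece $\bC[W]$.

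Next, for each partition $\la$ of length at most $n$ I would construct a bimodule morphism
\[
\Phi_\la\colon \bW_\la(V)\T_{\A_\la}\bW_\la^o(U)\longrightarrow \gr\bC[W\T\eO]^*
\]
by declaring that the cyclic bivector $w_\la\T w_\la^o$ maps to the distribution at the origin of $W\T\eO$ given by the classical Howe highest vector of $V_\la\T U_\la\hookrightarrow S(V^*\T U^*)=\bC[W]$. This distribution is annihilated by $\fn_+[t]$ on both sides and has $\fh\T 1$-weight $(\la,\la)$, so the universal property of the global Weyl modules extends the assignment uniquely across $\bW_\la(V)\T\bW_\la^o(U)$ once one checks that the induced $\A_\la$-actions from the left and right factors coincide on the image. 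By cyclicity, this $\A_\la$-compatibility reduces to a single computation on $w_\la\T w_\la^o$, where both actions should be governed by the same ring of ``$\eO$-insertion parameters'' of the Howe highest vector.

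To finish, I would compare graded characters. The graded character of $\bW_\la\T_{\A_\la}\bW_\la^o$ equals $p_\la(\bx,q)p_\la(\by,q)\prod_a(q)^{-1}_{\la_a-\la_{a+1}}$, since $p_\la$ is the graded character of the local Weyl module $\bW_\la/\A_\la^+\bW_\la$ and $\A_\la$ has Hilbert series $\prod_a(q)^{-1}_{\la_a-\la_{a+1}}$. Summing over $\la$ and invoking the generalised $q$-Whittaker Cauchy identity recalled above yields exactly the Hilbert series of $\gr\bC[W\T\eO]^*$. Together with the cyclicity of $\Phi_\la(w_\la\T w_\la^o)$ under $\GL_V(\eO)\times\GL_U(\eO)$, this forces $\bigoplus_\la\Phi_\la$ to be an isomorphism. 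The main obstacle is the $\A_\la$-compatibility step in the previous paragraph: identifying the two commuting $\A_\la$-actions on the distribution image is where the current-group refinement genuinely goes beyond classical Howe duality, and I expect this to require the highest weight category framework recalled in Section~\ref{sec::HWC}, together with the matching facts about $\bW_\la^o$ from the Weyl modules chapter.
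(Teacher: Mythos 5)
Your proposal shares the right overall strategy (transport the Theorem~A mechanism to the concrete model $V\T U\T\eO$, seed everything with the classical Howe highest vectors, finish by comparing graded characters against the $q$-Whittaker Cauchy identity), but two points go wrong, one of which is fatal.

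\textbf{The filtration is the wrong one.} You filter $\bC[W\T\eO]^*$ by jet order, i.e.\ by $t$-degree. Since $S^N(V\T U\T\bC[t])$ is already graded by $t$-degree and the current algebras act homogeneously, this ``filtration'' is just the grading itself: passing to $\gr$ changes nothing, and in particular you do not obtain a partition-indexed decomposition. The theorem asserts a filtration $F_\la$ indexed by the poset of partitions, and the paper constructs precisely that: using the classical Howe decomposition $S^N(V\T U)=\bigoplus_{\la\vdash N} V_\la\T U_\la^o$ of the degree-zero part, one sets $\bar v_\la=v_\la\T u_\la^o$ and $F_\la=\sum_{\mu\geq\la}\U(\mgl_m[t])\,\bar v_\mu\,\U(\mgl_n[t])$. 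Only after taking $F_\la/F_{>\la}$ does a single cyclic module $\bW_\la\T_{\A_\la}\bW_\la^o$ emerge, because the lower-triangular contributions get killed. Note that $S^N(V\T U\T\bC[t])$ itself does \emph{not} split as a $\mgl_V[t]\de\mgl_U[t]$-bimodule into $\bigoplus_\la\bW_\la\T_{\A_\la}\bW_\la^o$; your maps $\Phi_\la$ would have overlapping images, and no character count can force a direct sum that does not exist on the nose.

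\textbf{The spanning step is missing.} Even with the correct filtration, a surjection from $T_\la=\bW_\la\T_{\A_\la}\bW_\la^o$ onto $F_\la/F_{>\la}$ requires knowing that the full space is generated from the $t$-degree-zero piece $S^N(V\T U)$ by the current-algebra action; your invocation of ``cyclicity under $\GL_V(\eO)\times\GL_U(\eO)$'' does not establish this. The paper proves it as an explicit combinatorial statement (Lemma~\ref{zerolevelgeneratedbimodule}: $S^N(V\T U\T\bC[t])$ is generated by $S^N(V\T U)$ as an $\fn_-[t]\de\fn_+[t]$-bimodule, with $\fh[t]$ helping), via a three-step induction on monomial bases, reducing to the $\msl_2$ global Weyl module $\bW_{N\omega}\cong S^N(B[t])$. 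This is the non-trivial input you would have to supply before the character comparison closes the argument. Once the correct filtration is in place, the $\A_\la$-compatibility you flag is actually routine (it holds because $v_\la$ and $u_\la^o$ carry the same $\fh$-weight, exactly as in Lemma~\ref{surjection} of the Peter--Weyl case), so you have put the emphasis in the wrong place: the real work is the generation lemma, not the $\A_\la$-matching.
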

Here $\bW_\la$ is a certain $\mgl$-analogue of the global type $A$ Weyl module and
$\A_\la$ is an analogue of the highest weight algebra. We note that the $q$-Whittaker functions are equal to the
characters of the local Weyl modules in type $A$ and the character of the algebra $\A_\la$ is given by
the product $\prod_{a=1}^n (q)^{-1}_{\la_a-\la_{a+1}}$.

Classical Schur-Weyl duality gives an equivalence of categories of modules over the Lie algebra $\msl_m$ of weight $\leq n \omega_1$
and the category of representations of symmetric group $\mathfrak{S}_n$ whenever $m>n$. This equivalence is given by the bimodule
$V_{\omega_1}^{\T n}\simeq \bigoplus_{\lambda \vdash n}V_{\lambda} \otimes\mathbb{S}_{\lambda}$.
Note that $V_{\omega_1}$ is the tautological representation $V$ of the Lie algebra $\msl_V$.

 We prove the current analogue of this theorem, where the Lie algebra $\msl_m$ is replaced by the current Lie algebra $\msl_m[t]$, the tautological representation $V_{\omega_1}\simeq V$ is replaced by the global Weyl module $\bW_{\omega_1}\simeq V[t]$ and the symmetric group $\mathfrak{S}_n$ is replaced by  the algebra $\bC[\mathfrak{S}_n]\ltimes \bC[t_1,\ldots,t_n]$. The
 category of its representations is no more a semisimple category but admits a structure of a highest weight category (see Section~\S\ref{sec::HWC} for definition). Let $\mathbb{K}_{\lambda}$ be the standard modules in this category
(called global Kato modules). In particular, the automorphism algebra of $\mathbb{K}_{\lambda}$ is equal to $\A_{\lambda}$.

 \begin{thmc}{\rm (Theorems~\ref{thm::SW::currents} and~\ref{Schur-Weylcurrent} below)}	
 The bimodule $(\mathbb{W}_{\omega_1})^{\otimes n}$ gives an equivalence of the Serre subcategory $\Rep(\msl_m[t])^{\leq n\omega_1}$
 and the category $\Rep(\mathfrak{S}_n\ltimes \bK[t_1,\ldots,t_n])$ whenever $m>n$. Moreover $(\mathbb{W}_{\omega_1})^{\otimes n}$ admits a filtration
 with subquotients isomorphic to $\mathbb{W}_{\lambda}\otimes_{\A_{\lambda}}\mathbb{K}_{\lambda}$.
  \end{thmc}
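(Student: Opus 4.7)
The plan is to bootstrap from the classical Schur--Weyl duality by exploiting the concrete realization $\bW_{\omega_1}\simeq V[t]$. First I would identify the $n$-th tensor power as a bimodule:
\[
(\bW_{\omega_1})^{\otimes n}\;\simeq\;V^{\otimes n}\otimes\bK[t_1,\ldots,t_n],
\]
where $t_i$ acts on the $i$-th tensor factor through the $\bK[t]$-structure on $\bW_{\omega_1}$. Under this identification the right action of $\bK[\mathfrak{S}_n]\ltimes\bK[t_1,\ldots,t_n]$ is realized by simultaneously permuting the tensor factors of $V^{\otimes n}$ and the variables $t_i$; the left $\msl_m[t]$-action is the diagonal evaluated action, extended $\bK[t_1,\ldots,t_n]$-linearly. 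At this level the classical Schur--Weyl decomposition $V^{\otimes n}=\bigoplus_{\la}V_\la\otimes\mathbb{S}_\la$ already governs the ``value at $t=0$''.

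Next I would build a filtration on $(\bW_{\omega_1})^{\otimes n}$ whose subquotients are $\bW_\la\otimes_{\A_\la}\mathbb{K}_\la$. Each global Weyl module $\bW_\la$ is cyclic, generated by a highest weight vector with explicit annihilator, and its endomorphism algebra over $\msl_m[t]$ is $\A_\la$. This produces a canonical bimodule map
\[
\bW_\la\otimes_{\A_\la}\mathbb{K}_\la\;\longrightarrow\;(\bW_{\omega_1})^{\otimes n}
\]
sending the cyclic generator of $\bW_\la$ to a distinguished highest weight vector in $V^{\otimes n}\otimes\bK[t_1,\ldots,t_n]$ tensored with a generator of $\mathbb{K}_\la$. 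Iterating this construction over partitions of $n$ of length at most $m$, ordered in a refinement of the dominance order, yields the required filtration; comparing graded characters with the Cauchy identity for $q$-Whittaker functions (in the spirit of Theorem~B) verifies that no standard module is missing.

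For the equivalence of categories, the key input is the highest weight category structure on $\Rep(\msl_m[t])^{\leq n\omega_1}$ and on $\Rep(\mathfrak{S}_n\ltimes\bK[t_1,\ldots,t_n])$ recalled in Section~\ref{sec::HWC}. The filtration just constructed exhibits $(\bW_{\omega_1})^{\otimes n}$ as a tilting bimodule: a standard filtration by $\bW_\la$'s on the left and by $\mathbb{K}_\la$'s on the right. When $m>n$ all partitions of $n$ fit inside the $\msl_m$-weight lattice, so every standard object on both sides is realized. The functor $\Hom_{\msl_m[t]}\bigl((\bW_{\omega_1})^{\otimes n},-\bigr)$ is then exact on the Serre subcategory, sends standards to standards, and by the usual tilting-Morita formalism yields the claimed equivalence.

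The main obstacle, I expect, is showing that the subquotients are genuinely $\bW_\la\otimes_{\A_\la}\mathbb{K}_\la$ as bimodules, not just on the level of characters. This amounts to matching the $\A_\la$-action arising on one side from $\mathrm{End}_{\msl_m[t]}(\bW_\la)$ with the $\A_\la$-action present on the other side through the automorphism algebra of $\mathbb{K}_\la$. I would handle this by appealing to the universal property of $\bW_\la$: once a highest weight vector is fixed on both sides, the map extends uniquely, and the two $\A_\la$-actions must coincide because each is determined by the unique family of endomorphisms shifting this fixed vector consistently with the polynomial structure induced by the variables $t_1,\ldots,t_n$.
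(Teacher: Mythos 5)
Your route diverges from the paper's, and the divergence is exactly where a gap opens up. The paper proves that $V[t]^{\otimes n}\simeq(\bW_{\omega_1})^{\otimes n}$ is a \emph{projective generator} of $\Rep(\msl_m[t])^{\leq n\omega_1}$ with endomorphism ring $\bK[\mathfrak{S}_n]\ltimes\bK[t_1,\ldots,t_n]$, and then invokes the classical Morita theorem. Projectivity is the hard step; it is obtained by introducing an auxiliary space $U$ with $\dim U>n$, using the current Howe duality (Theorem~\ref{HoweCurrent}) to show $S^n(V\otimes U\otimes\bK[t])$ is generated by its degree-zero part, matching its graded character to a sum of projective covers via BGG reciprocity, concluding it is projective, and extracting $V[t]^{\otimes n}$ as a direct summand (the $(1^n,0^{\dim U-n})$-weight space for the $\gl_U$-action). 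You never establish projectivity. Instead you assert that $\Hom_{\msl_m[t]}\bigl((\bW_{\omega_1})^{\otimes n},-\bigr)$ is exact on the Serre subcategory, which is \emph{literally} the statement of projectivity, and you invoke a ``tilting--Morita formalism.'' But a tilting object in a highest weight category gives Ringel duality, an equivalence with modules over the Ringel dual, not an abelian Morita equivalence with $\mathrm{End}(T)$-modules; and a bimodule with a $\Delta$-filtration as a left module and a $\Delta$-filtration as a right module is not what ``tilting'' means in any case (that requires both a $\Delta$- and a $\nabla$-filtration on the same side). The load-bearing fact here is projectivity, and your sketch leaves it unproven.

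There is also a circularity in the filtration part. You propose to construct maps $\bW_\la\otimes_{\A_\la}\mathbb{K}_\la\to(\bW_{\omega_1})^{\otimes n}$ and then use the $q$-Whittaker Cauchy identity to see that nothing is missing. But $\mathbb{K}_\la$ is defined as the standard module of the highest weight category $\Rep(\mathfrak{S}_n\ltimes\bK[t_1,\ldots,t_n])$, and that highest weight structure (hence the very notion of ``global Kato module'') is derived in the paper \emph{as a corollary of} the Morita equivalence you are still trying to establish. The paper avoids this by proving the equivalence first, and then transferring the Howe filtration of $S^n(V\otimes U\otimes\bK[t])$ (from Theorem~\ref{HoweCurrent}) across the equivalence, which replaces $\bW_\la^o(U)$ by $\mathbb{K}_\la$. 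Your direct plan would need an independent construction of $\mathbb{K}_\la$, a proof that the two $\A_\la$-actions agree (you flag this as the main obstacle but the resolution you offer is only a heuristic appeal to universal properties), and a proof that the images of your maps nest into a filtration with exactly the claimed subquotients; none of these is supplied in the sketch.
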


Finally, Theorem~{\bf{C}} can be used to deduce a numerical information on local/global Kato modules. In particular, we are able to index a basis of the local Weyl module $K_{\lambda}$ by fillings of Young diagrams in Section~\S\ref{sec::Kato::fillings} and find   a filtration by global Weyl modules of the wedge powers $\Lambda^n \bW_{\omega_1}$ (see Theorem~\ref{thm::Lambda::SW} below).

Our paper is organized as follows. In Section~\S\ref{Notation} we introduce the main algebraic and geometric objects we use in the paper.
We also recall basic facts and constructions to be used in the next sections.
In Section~\S\ref{PWC} we formulate and prove the Peter-Weyl theorem for current groups (Theorem {\bf{A}}).
In Section~\S\ref{TypeA} we deal with the type A case (mostly with $\fg=\mgl_n$)  and prove Theorem {\bf{B}}.
In Section~\S\ref{SW} we prove Theorem {\bf{C}}, deduce some properties of Kato modules from \cite{Kat2} and finish with the description of the wedge power $\Lambda^n \bW_{\omega_1}$ and related identities for characters.

\section{Notation and background}\label{Notation}
\subsection{Notation}
Let $\fg$ be a simple finite-dimensional Lie algebra over $\bC$ .
\begin{rem}
Throughout the paper we work over the complex numbers. However, we expect that most of our results hold over
an arbitrary algebraically closed field of characteristic zero.
\end{rem}

\begin{rem}
We note that many of the results and constructions below are valid for arbitrary reductive Lie algebras.
However, when passing to the theory of Weyl modules over the current algebra, the (semi)simplicity
becomes crucial. In particular, the $\mgl_n$-case requires separate definitions.
\end{rem}

Let $G$ be the corresponding simple simply-connected algebraic group.
Let $\Delta=\Delta_+\cup \Delta_-$ be the root system, $\fg=\fn_-\oplus\fh\oplus \fn_+$ be the triangular decomposition,
$W$ be the Weyl group. We denote by $r=\dim\fh$ the rank of $\fg$ and let $\langle\cdot,\cdot,\rangle$ be the Killing
form on $\fh^*$. Let $e_{\alpha}$ be the root vector for
$\alpha \in \Delta_+$, $f_{\alpha}$ be the root vector for $\alpha \in \Delta_-$.
We denote by $\{\alpha_i\}_{i=1}^r$, $\{\omega_i\}_{i=1}^r$ the sets of simple roots and fundamental weights.
Let $P$ be the weight lattice, $P_+$ the semigroup of dominant integral weights, i.e.
$P=\bigoplus_{i=1}^r \bZ\om_i$, $P_+=\bigoplus_{i=1}^r \bZ_{\ge 0}\om_i$. For $\lambda \in P_+$ we denote by
$V_{\lambda}$ the (left) irreducible $\fg$-module with the highest weight $\lambda$. This module is cyclic
with the cyclic vector $v_{\lambda}$ and the following relations:
\begin{equation}
e_{\alpha}v_{\lambda}=0, \alpha \in \Delta_+,\ (f_{-\alpha_i})^{\langle \alpha_i^\vee,\lambda \rangle+1}v_{\lambda}=0,\
hv_{\lambda}=\lambda(h) v_{\lambda}, h\in\fh
\end{equation}
(here $\alpha_i^\vee$ are simple coroots; in particular, if $\la=\sum_{i=1}^r m_i\om_i$, then $\langle \alpha_i^\vee,\lambda \rangle=m_i$).

Similarly one defines the right module $V^o_{\lambda}$ over $\fg$ (i.e. for any $x,y\in\fg$ the commutator
$[x,y]$ acts as $yx-xy$) with the highest weight $\lambda$:
\begin{equation}
v_{\lambda}^o f_{-\alpha}=0, \alpha \in \Delta_+,\ v_{\lambda}^o (e_{\alpha_i})^{\langle \alpha_i^\vee,\lambda \rangle+1}=0,\
v_{\lambda}^o h=\lambda(h)v_{\lambda}, h\in\fh.
\end{equation}
\begin{rem}\label{opp1}
We note that a (left) module $\pi:\fg\to\mgl(V)$ induces the right module ${\rm opp}\circ\pi:\fg\to \mgl(V)$
by the formula $x({\rm opp}\circ\pi)=-\pi(x)$.
The right module thus obtained will be denoted by ${\rm opp} (V)$ (in particular,
${\rm opp} (V)$ coincides with $V$ as a vector space). Then one easily sees that
$$V_\la^o\simeq {\rm opp} (V_\la^*),$$ i.e.
to obtain the right module $V_\la^o$ one has to take the dual (left) module $V_\la^*$ and to negate all the Lie algebra operators.
\end{rem}

\begin{rem}\label{opp2}
Below we will also consider the right representations of the Lie groups (i.e. the map $\pi:G\to {\rm GL}(V)$
such that $\pi(g_1g_2)=\pi(g_2)\pi(g_1)$). Clearly, given a (left) $G$-module $V$ one constructs the right
$G$ module ${\rm opp}(V)$ by inverting all the  operators corresponding to the Lie group elements. Obviously, the procedure sends
$V_\la^*$ to $V_\la^o$.
\end{rem}

\subsection{Peter-Weyl theorem}
The group $G\times G$ acts on $G$ by the multiplication from the left and from the right. More precisely, we set
$(g_1,g_2).g=g_1gg_2$. So one copy of $G$ acts by the (usual) left action, and the other acts via the right action.
Let $\bC[G]$ be the space of algebraic functions on $G$. The natural $G\de G$-bimodule structure of $\bC[G]$ is explicitly
given by $((g_1,g_2).\Psi)(g)=\Psi(g_1^{-1}gg_2^{-1})$. The celebrated Peter-Weyl theorem (see e.g. \cite{GW}, Theorem 4.2.7.)
describes the structure of this bimodule .
\begin{thm}\label{PeterWeylThm}
\[\bC[G]\simeq\bigoplus_{\lambda\in P_+}V_{\lambda}\otimes V^o_{\lambda}.\]
\end{thm}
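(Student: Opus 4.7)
My plan is to establish the decomposition via the natural matrix-coefficient map. Using the identification $V_\lambda^o \simeq V_\lambda^*$ of Remark~\ref{opp1}, I define $\Phi_\lambda\colon V_\lambda\otimes V_\lambda^o \to \bC[G]$ by $\Phi_\lambda(v\otimes \xi)(g) := \xi(g^{-1}v)$ and set $\Phi = \bigoplus_{\lambda\in P_+}\Phi_\lambda$. A direct check against the formula $((g_1,g_2).\Psi)(g)=\Psi(g_1^{-1}gg_2^{-1})$ verifies that $\Phi$ intertwines the $G\times G$-bimodule structures on source and target. For injectivity, each $V_\lambda\otimes V_\lambda^o$ is irreducible as a $G\times G$-module---the external tensor product of two irreducibles of simple factors is irreducible---and distinct $\lambda\ne\mu$ yield pairwise non-isomorphic simples; since $\Phi_\lambda$ is manifestly nonzero, it is injective on each summand, and the sum of non-isomorphic simple submodules is automatically direct.

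The remaining task, surjectivity, I would establish by an algebra-generation argument rather than by a Frobenius-reciprocity multiplicity count, which has the pleasant feature of avoiding the bookkeeping with left/right and opp-module conventions. Choose any faithful finite-dimensional $G$-module $V$; the induced embedding $G\hookrightarrow \GL(V)$ is closed, so $\bC[G]$ is generated as a $\bC$-algebra by the restrictions of the matrix entries of $V$ together with the inverse determinant. After decomposing $V$ and $V^*$ into irreducibles, all such generators lie in the image of $\Phi$. The key observation is that this image is closed under pointwise multiplication: a product $\Phi_\lambda(v\otimes\xi)\cdot \Phi_\mu(v'\otimes\xi')$ is precisely the matrix coefficient function $g\mapsto(\xi\otimes\xi')(g^{-1}(v\otimes v'))$ of the tensor representation $V_\lambda\otimes V_\mu$; by complete reducibility this decomposes as $\bigoplus_\nu V_\nu^{\oplus c_{\lambda\mu}^{\nu}}$ and the corresponding matrix coefficients land in $\Phi_\nu(V_\nu\otimes V_\nu^o)$. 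Hence the image of $\Phi$ is a subalgebra of $\bC[G]$ containing a generating set and must therefore equal $\bC[G]$.

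The principal nonformal ingredient---and thus the main potential obstacle---is the algebra-generation claim itself: that the matrix entries of a faithful representation, together with $\det^{-1}$, truly generate $\bC[G]$ as a $\bC$-algebra. This is the classical geometric fact that any closed subgroup $G\subset\GL(V)$ has coordinate ring expressible as a quotient of $\bC[\operatorname{End}(V)][\det^{-1}]$, and it is the only significant nonformal input to the proof. Once this is granted, everything else is a formal consequence of complete reducibility of finite-dimensional $G$-modules and the Schur-lemma identification of the isotypic components of $G\times G$-irreducibles.
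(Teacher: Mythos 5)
The paper does not prove Theorem~\ref{PeterWeylThm}; it is cited as a classical fact (Goodman--Wallach, Theorem 4.2.7), so there is no ``paper's proof'' to compare against. Your proof is correct and complete. The $G\times G$-equivariance of $\Phi_\lambda(v\otimes\xi)(g)=\xi(g^{-1}v)$ does hold with the paper's left-right convention: the right action of $g_2$ on $V_\lambda^o\simeq V_\lambda^*$ is the precomposition $\xi\mapsto\xi\circ\pi_\lambda(g_2)$, so both $\Phi_\lambda\bigl((g_1,g_2).(v\otimes\xi)\bigr)(g)$ and $\bigl((g_1,g_2).\Phi_\lambda(v\otimes\xi)\bigr)(g)$ evaluate to $\xi(g_2 g^{-1} g_1 v)$. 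The injectivity argument via simplicity of the external tensor product $V_\lambda\otimes V_\lambda^o$ and pairwise non-isomorphism of the summands is standard and fine. Your surjectivity argument --- that the span of matrix coefficients is a subalgebra of $\bC[G]$ (products of coefficients being coefficients of tensor products, which decompose by complete reducibility), and contains the matrix entries of a faithful representation --- is a clean alternative to the route most commonly seen in textbooks, which instead shows that the translation action on $\bC[G]$ is locally finite, decomposes each finite-dimensional $G$-stable subspace into irreducibles, and recognizes its elements as matrix coefficients. Both need one structural input: yours requires that a faithful representation of an affine algebraic group over $\bC$ is a closed embedding into $\GL(V)$, so that the restricted entries and $\det^{-1}$ (here identically $1$, since $G$ is simple simply-connected and has no nontrivial characters) generate $\bC[G]$; the local-finiteness route requires the Hopf-algebraic fact that the coaction $\bC[G]\to\bC[G]\otimes\bC[G]$ is finite on each element. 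Your argument has the merit of exhibiting explicitly where each irreducible summand comes from.
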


\begin{rem}
The Peter-Weyl theorem is sometimes stated in the following form: $\bC[G]\simeq \bigoplus_{\la\in P_+} V_\la\T V_\la^*$.
This form of the theorem is valid if one considers the genuine (left-left) action of the group $G\times G$ on the algebra of functions
$\bC[G]$:  $((g_1,g_2).\Psi)(g)=\Psi(g_1^{-1}gg_2)$. The two formulations are related by the simple observation from Remarks \ref{opp1},
\ref{opp2}. The reason we prefer the left-right action is that in order to formulate the Peter-Weyl theorem for current algebras
we need to tensor two infinite-dimensional modules over the current algebra. In such a situation we prefer not to work with dual spaces.
\end{rem}

\subsection{Howe duality}
Howe duality \cite{Howe} can be regarded as a version of the Peter-Weyl theorem for the space of matrices ${\rm Mat}_{m,n}$.
Let us recall some details.
The weight lattice of $\mathfrak{gl}_n$
is equal to $P=\mathbb{Z}^n=\bigoplus_{i=1}^n \bZ \epsilon_i$. Let
\[
P_+=\lbrace \la_1\epsilon_1+ \dots +\la_n\epsilon_n |\la_1\geq \la_2 \geq \dots \geq \la_n \geq 0\rbrace\subset P
\]
be the set of dominant integral weight. We denote by $V_{\lambda}$ the irreducible highest weight $\gl_n$ module with
highest weight $\lambda$.

Denote the weight $\sum_i \la_i\epsilon_i$ by $\lambda=(\la_1, \dots, \la_n)$.
Note that the restriction of the $\mgl_n$ module $V_{\lambda}$ to $\msl_n$ is isomorphic to the irreducible representation
$V_{\sum_{i=1}^{n-1} (\la_i-\la_{i+1})\om_i}.$ Each dominant integral weight of $\gl_n$ corresponds to the Young
diagram with at most $n$ rows in the following way. The diagram $Y_\la$ attached to $\la$ contains $\la_1$ boxes in the first row,
$\la_2$ boxes in the second row, etc.
For example, for  $\lambda=(4,3,1)$, the diagram $Y_\la$ is of the following form:
\[
\begin{picture}(140.00,70.00)(-5,00)

 \put(0.00,10.00){\line(1,0){20}}
 \put(0.00,30.00){\line(1,0){60}}
 \put(0.00,50.00){\line(1,0){80}}
 \put(0.00,70.00){\line(1,0){80}}

 \put(0.00,10.00){\line(0,1){60}}
 \put(20.00,10.00){\line(0,1){60}}
 \put(40.00,30.00){\line(0,1){40}}
 \put(60.00,30.00){\line(0,1){40}}
 \put(80.00,50.00){\line(0,1){20}}

\end{picture}\]
For a weight $\la$ we denote by $|\lambda|$ the number of boxes in the Young diagram $Y_\la$.
Now let $V$ be an $m$-dimensional vector space and let $U$ be an $n$-dimensional vector space.
Then $V \otimes U^o$ has a natural structure
of $\gl_m\de \gl_n$-bimodule (i.e. $\mgl_m$ acts from the left and $\mgl_n$ acts from the right).
We recall that $U^o$ is isomorphic to the dual space $U^*$ as a vector space. Thus the tensor product
$V\T U^o$ is naturally identified with the space of homomorphisms ${\rm Hom}(U,V)\simeq {\rm Mat}_{m,n}$.

We consider the $N$-th symmetric power of the $\gl_m\de \gl_n$-bimodule $S^N(V \otimes U^o)$.
\begin{thm}\label{Howeduality}
One has the isomorphism of $\mgl_n\de\mgl_m$ bimodules:
\[
S^N(V \T U^o)\simeq \bigoplus_{\la\vdash N} V_\lambda \otimes U^o_{\lambda},
\]
where $\la_{n+1}=\la_{m+1}=0$ in the right hand side.
\end{thm}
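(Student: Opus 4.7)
The plan is to combine the Cauchy identity with the fact that the bimodule in question is semisimple, so that the entire statement reduces to a character computation.

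First, since both $\mgl_m$ and $\mgl_n$ are reductive and only polynomial weights appear on $V\otimes U^o$, each finite-dimensional graded piece $S^N(V\otimes U^o)$ is a semisimple $\mgl_m\times\mgl_n$-bimodule (for each factor, the center acts by a scalar on $S^N$ and the semisimple part acts semisimply). Hence it decomposes as
\[
S^N(V\otimes U^o)\simeq \bigoplus_{\mu,\nu} m^N_{\mu,\nu}\,V_\mu\otimes U^o_\nu
\]
with non-negative multiplicities $m^N_{\mu,\nu}$. Since the bigraded characters of the distinct irreducibles $V_\mu\otimes U^o_\nu$ are linearly independent, the multiplicities are completely determined by the bigraded character of $S^\bullet(V\otimes U^o)$.

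Second, compute that character. Let $x_i$ and $y_j$ be the formal torus variables for $\mgl_m$ acting on $V$ and for $\mgl_n$ acting from the right on $U^o$. By Remark~\ref{opp1} one has $U^o_\lambda\simeq \mathrm{opp}(U^*_\lambda)$, so the two sign flips (linear dual and opposite action) cancel, and in particular the right-weight character of $U^o$ equals $\sum_j y_j$, matching the left-weight character of $U$. Therefore
\[
\mathrm{ch}\,S^\bullet(V\otimes U^o)\;=\;\prod_{i=1}^m\prod_{j=1}^n\frac{1}{1-x_iy_j}.
\]

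Third, apply the classical Cauchy identity
\[
\prod_{i=1}^m\prod_{j=1}^n\frac{1}{1-x_iy_j}\;=\;\sum_{\lambda}s_\lambda(x_1,\ldots,x_m)\,s_\lambda(y_1,\ldots,y_n),
\]
where the sum runs over all partitions $\lambda$ (with $s_\lambda$ automatically vanishing whenever $\ell(\lambda)>\min(m,n)$). Since $s_\lambda(x)=\mathrm{ch}(V_\lambda)$ and $s_\lambda(y)=\mathrm{ch}(U^o_\lambda)$, the right-hand side is exactly $\sum_\lambda\mathrm{ch}(V_\lambda)\,\mathrm{ch}(U^o_\lambda)$, which forces $m^N_{\mu,\nu}=\delta_{\mu,\nu}$ for all partitions $\mu\vdash N$ with at most $\min(m,n)$ parts. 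The only delicate point is the bookkeeping of right-module characters via Remark~\ref{opp1}; once this is in place, the theorem reduces to Cauchy's identity combined with semisimplicity.
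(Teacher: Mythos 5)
Your argument is correct, and it is worth noting that the paper itself states Theorem~\ref{Howeduality} without proof, simply citing it as Howe's classical duality; so there is no in-text proof to compare against. Your route — establish semisimplicity of $S^N(V\otimes U^o)$ as a $\mgl_m\times\mgl_n$-bimodule, compute its bicharacter, and invoke the Cauchy identity to read off multiplicities — is a standard and complete character-theoretic proof. The two points you flag as delicate are indeed the ones to watch and you handle both: (a) the identity of $\mgl_m$ (resp.\ $\mgl_n$) acts by the scalar $N$ (resp.\ $-N$, suitably normalized through the opp-dual twist) on $S^N$, so the reductive-but-not-semisimple issue disappears and the decomposition into irreducible bimodules $V_\mu\otimes U^o_\nu$ is valid; and (b) the chain $U^o_\lambda\simeq\mathrm{opp}(U^*_\lambda)$ (Remark~\ref{opp1}) flips the sign twice, so $\mathrm{ch}(U^o_\lambda)=s_\lambda(y)$, exactly parallel to $\mathrm{ch}(V_\lambda)=s_\lambda(x)$. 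With those in place, linear independence of the products $s_\mu(x)s_\nu(y)$ forces $m^N_{\mu,\nu}=\delta_{\mu,\nu}$ for $\mu\vdash N$ with $\ell(\mu)\leq\min(m,n)$.

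Two small remarks. First, the character argument by itself only determines the decomposition type, not a canonical isomorphism; Howe's original proof constructs the two commuting actions and the explicit joint-highest-weight vectors, which gives slightly more structural information (e.g.\ that the two commutants are each other's full commutant). For the purposes of this paper — and in particular for Theorem~\ref{HoweCurrent}, which only needs the graded character of $S^N(V\otimes U\otimes\bC[t])$ — the decomposition type is all that is used, so your proof is fully adequate. Second, one should be mindful that deriving the Cauchy identity from Howe duality and then using it to prove Howe duality would be circular, but the Cauchy identity has independent combinatorial proofs (e.g.\ via RSK), so your argument is not circular.
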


\subsection{Highest weight categories}
\label{sec::HWC}
In this section we recall the notion of a \emph{highest weight category} introduced in~\cite{CPS}.

Let $\cat$ be a $\bK$-linear abelian category whose simple objects are indexed by elements of a given set $\Upsilon$. Let $\pi:\Upsilon\twoheadrightarrow \Lambda$ be a surjection on a partially ordered set $(\Lambda,\leq)$.
\footnote{In \cite{CPS} one deals with finite posets $(\Lambda,\leq)$. However, as it is shown in~\cite{Kh} it is enough to have
slightly more general finiteness conditions. For example, one can deal with the poset $(\Lambda,\leq)$ whose
subposet $(\Lambda^{\leq\lambda},\leq)$ is finite for all $\lambda\in\Lambda$.}
Denote by $\cat^{\leq \lambda}$ the Serre subcategory generated by simples $L(\mu)$ with $\pi(\mu)\leq \lambda$.
Respectively, by $\cat^{=\lambda}$ we denote the quotient category $\cat^{\leq \lambda}/\cat^{<\lambda}$. Thus, for each $\lambda$ we have a pair of functors between corresponding categories.
\begin{dfn}
	A category $\cat$ is called a \emph{Highest Weight Category} iff $\forall \lambda\in \Lambda$ the embedding $\cat^{\leq\lambda}\hookrightarrow \cat$ is fully faithful on the level on derived categories:
	\[\forall M,N\in\cat^{\leq\lambda} \text{ there is an isomorphism } {\mathrm{RHom}}_{\cat^{\leq\lambda}}(M,N)  \simeq  {\mathrm{RHom}}_{\cat}(M,N). \]
\end{dfn}
We suppose that the category $\cat$ contains enough projectives (injectives) and we denote by $P(\lambda)$ a projective cover of $L(\lambda)$\footnote{All projective covers of an irreducible object are isomorphic but not up to a canonical isomorphism.}.
Denote by $\Delta(\lambda)$ a projective cover of $L(\lambda)$ in the subcategory $\cat^{\leq\lambda}$ which is called a \emph{standard object}. Respectively, we denote by $\nabla_{\lambda}$ (called costandard object) an injective hull of $L(\lambda)$ in $\cat^{\leq\lambda}$. By $\A_{\lambda}$ we denote the algebra of endomorphisms of $\Delta(\lambda)$.
We end up with the following diagram of (adjoint) functors of abelian categories:
\[
\begin{tikzcd}
\cat
\arrow[rrr,shift left=1ex,"\imath_{\lambda}^{!}(M):=M/((M)^{\not\leq\lambda})"]
&&&
\cat^{\leq\lambda}
\arrow[lll,shift left=1ex,"\imath_{\lambda}","\perp"']
\arrow[rrr,dotted,"{r_{\lambda}}" description,"\perp" near start,"\perp"' near start]
&&&
\cat^{=\lambda} \simeq \A_{\lambda}\text{-}mod
\arrow[lll,shift right=2ex,"r_\lambda^!(\text{-}):=\Delta(\lambda)\otimes_{\A_\lambda}\text{-}"']
\arrow[lll,shift left=2ex,"r_{\lambda}^{*}(\text{-}):= Hom_{\A_\lambda}\left(\Delta(\lambda) \text{,-}\right)"]
%{\A_\lambda}(\Delta(\lambda),\text{-})"]
\end{tikzcd}
\]
\emph{A proper standard} object $\bar{\Delta}(\lambda)$ is defined to be the image of $L(\lambda)$ after applying the left
adjoint functor $r_{\lambda}^{!}$. (Respectively, a proper costandard $\bar\nabla(\lambda)$ is defined to be the image of the
right adjoint $r_\lambda^{*}(L(\lambda))$).
Finally we are able to recall several equivalent definitions of a highest weight category:
\begin{thm}(\cite{CPS})
The following is equivalent for a $\bK$-linear abelian category $\cat$ whenever the (finite) set of irreducible objects $\Upsilon$ is partially ordered:
\begin{itemize}
	\item[(S1)] $\cat$ is a highest weight category;
	\item[(S2)] for all $\lambda\in \Upsilon$ a projective cover $P(\lambda)$ admits a filtration whose successive subquotients are isomorphic to standard modules $\Delta(\mu)$ with $\mu\ge\lambda$;
	\item[(S3)] For all $\lambda,\mu\in \Upsilon$ we have the following vanishing conditions
	%(s4)
	\begin{equation}
	\dim Ext_{\cat}^{i}(\Delta(\lambda),\bar{\nabla}(\mu)) = \left\{
	\begin{array}{l}
	1, \text{ if } \lambda=\mu \ \& \ i=0, \\
	0, \text{ otherwise. }
	\end{array}
	\right.
	\end{equation}
	\item[(S4)] The second extension group $Ext_{\cat}^{2}(\Delta(\lambda),\bar{\nabla}(\mu))=0$ vanishes for all $\lambda,\mu\in\Upsilon$.
	%(s3)
\end{itemize}
\end{thm}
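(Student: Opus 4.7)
The plan is to prove the cycle (S1)$\Rightarrow$(S2)$\Rightarrow$(S3)$\Rightarrow$(S4)$\Rightarrow$(S1). The implication (S3)$\Rightarrow$(S4) is immediate by specialization of $i$ to $2$, so the real content is in the three remaining arrows. The main obstacle will be the last one, (S4)$\Rightarrow$(S1), where a single degree-two vanishing must be bootstrapped into full derived fully-faithfulness of all Serre subcategories $\cat^{\leq\lambda}\hookrightarrow\cat$.

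For (S1)$\Rightarrow$(S2), I would fix $\lambda\in\Upsilon$ and argue by induction on the size of the support of $P(\lambda)$ in the poset. The projective cover $P(\lambda)$ in $\cat$ admits a canonical surjection onto $\Delta(\lambda)$, the projective cover of $L(\lambda)$ in $\cat^{\leq\lambda}$, and by construction of $\Delta(\lambda)$ no composition factor $L(\mu)$ with $\mu\leq\lambda$ appears in the kernel. Choosing a minimal element $\mu$ in the support of this kernel and using that the embedding $\cat^{\leq\mu}\hookrightarrow\cat$ is derived fully-faithful, I lift the map onto $L(\mu)$ to a map $\Delta(\mu)\to P(\lambda)$ whose image contributes the next layer of a $\Delta$-filtration. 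Iterating produces the desired filtration with subquotients $\Delta(\mu_i)$, each $\mu_i\geq\lambda$ because the kernel lives entirely above $\lambda$.

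For (S2)$\Rightarrow$(S3), the standard filtrations on projectives give, for every $\lambda$, a projective resolution of $\Delta(\lambda)$ whose syzygies are filtered by $\Delta(\nu)$ with $\nu>\lambda$. Computing $\operatorname{Ext}^{i}_{\cat}(\Delta(\lambda),\bar\nabla(\mu))$ then reduces to $\operatorname{Hom}_{\cat}(\Delta(\nu),\bar\nabla(\mu))$ for various $\nu\geq\lambda$. Since $\bar\nabla(\mu)=r^{*}_{\mu}(L(\mu))$ is the right adjoint image of the simple in the quotient category, adjunction gives an identification $\operatorname{Hom}_{\cat}(\Delta(\nu),\bar\nabla(\mu))\simeq\operatorname{Hom}_{\cat^{=\mu}}(r^{!}_{\mu}\Delta(\nu),L(\mu))$, which is one-dimensional when $\nu=\mu$ and zero otherwise. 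Combining with the shape of the resolution yields both the nonvanishing at $(i,\lambda)=(0,\mu)$ and the vanishing elsewhere.

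The hardest step (S4)$\Rightarrow$(S1) I would attack by noetherian induction on order ideals of $\Upsilon$. To establish $\mathrm{RHom}_{\cat^{\leq\lambda}}(M,N)\simeq\mathrm{RHom}_{\cat}(M,N)$ for $M,N\in\cat^{\leq\lambda}$, long exact sequences in both arguments and the inductive hypothesis on $\cat^{<\lambda}$ reduce the problem to showing $\operatorname{Ext}^{i}_{\cat}(\Delta(\lambda),\bar\nabla(\mu))=0$ for all $i\geq 1$ and all $\mu$. Induction on $i$ starts from the hypothesis (S4) at $i=2$; the step from degree $k\geq 2$ to degree $k+1$ uses a short exact sequence $0\to\bar\nabla(\mu)\to I\to C\to 0$ where $I$ is an injective hull in a suitable Serre subcategory $\cat^{\leq\nu}$ with $\nu\geq\max(\lambda,\mu)$ and $C$ lies in that subcategory, so that the associated long exact sequence trades degree-$(k+1)$ vanishing against degree-$k$ vanishing. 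The subtle point, and the genuine obstacle, is to arrange these short exact sequences so that both $I$ and $C$ remain in a Serre subcategory small enough that the inductive hypothesis (together with the already established implications (S1)$\Rightarrow$(S2)$\Rightarrow$(S3) applied to the smaller subcategory) gives the vanishing of the intermediate Ext groups needed to close the induction.
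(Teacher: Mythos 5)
The paper does not prove this theorem: it is stated as a recollection and attributed directly to \cite{CPS} (and the paper cites \cite{Kh} for the infinite-poset refinement in a footnote). There is therefore no internal proof to compare your attempt against, and the authors clearly intend the reader to consult Cline--Parshall--Scott for the argument.

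That said, a few comments on the sketch itself. The cycle (S1)$\Rightarrow$(S2)$\Rightarrow$(S3)$\Rightarrow$(S4)$\Rightarrow$(S1) with the trivial third arrow is the right global plan, and the first two implications are the standard ones. But the step (S2)$\Rightarrow$(S3) as written has a circularity you have not addressed: to compute $\Hom_{\cat}(P_i,\bar\nabla(\mu))$ layer by layer from a $\Delta$-filtration of $P_i$ you already need the vanishing $\operatorname{Ext}^1_{\cat}(\Delta(\nu),\bar\nabla(\mu))=0$ along the filtration, which is part of what (S3) asserts; the usual fix is to first establish $\operatorname{Ext}^{>0}_{\cat}(P,\bar\nabla(\mu))=0$ for $P$ projective directly and then run an induction on the poset, not just read it off a resolution. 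More seriously, the implication (S4)$\Rightarrow$(S1) is the genuinely hard content of the CPS theorem, and your noetherian-induction sketch does not close: the short exact sequence $0\to\bar\nabla(\mu)\to I\to C\to 0$ with $I$ injective only converts $\operatorname{Ext}^{k+1}(\Delta(\lambda),\bar\nabla(\mu))$ into $\operatorname{Ext}^{k}(\Delta(\lambda),C)$ if $I$ is injective in the ambient category $\cat$; if $I$ is only injective in some $\cat^{\leq\nu}$, then you are implicitly invoking the derived fully-faithfulness of $\cat^{\leq\nu}\hookrightarrow\cat$, which is exactly (S1). The CPS argument avoids this circle by a more careful d\'evissage (and some sources prove (S4)$\Rightarrow$(S2) directly, controlling $\operatorname{Ext}^1$ and then bootstrapping via Ext$^2$-vanishing to build the $\Delta$-filtrations on projectives), and any complete write-up would have to surface and resolve precisely the dependency your sketch acknowledges but leaves open.
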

The notion of a highest weight category $\cat$ was initiated by the following equality of multiplicities:
\begin{cor}
\label{cor::BGG}	
	The BGG (Bernstein-Gelfand-Gelfand) reciprocity holds in a highest weight category:
\[ \forall \lambda\leq\mu \text{ one has equality of multiplicities }[P(\lambda): \Delta(\mu)] = [\bar{\nabla}(\mu):L(\lambda)].\]
\end{cor}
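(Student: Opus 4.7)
The plan is to compute $\dim \mathrm{Hom}_{\cat}(P(\lambda), \bar\nabla(\mu))$ in two ways and match the results. Both computations rely on the Ext-vanishing statement (S3) and the standard-filtration statement (S2) just recalled.

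For the first computation, I would use the universal property of a projective cover: since $P(\lambda)$ is the projective cover of $L(\lambda)$ and the category has enough projectives, for any object $M$ of finite length one has
\[
\dim \mathrm{Hom}_{\cat}(P(\lambda), M) \;=\; [M : L(\lambda)].
\]
Applied with $M = \bar\nabla(\mu)$ this immediately produces the right-hand side $[\bar\nabla(\mu) : L(\lambda)]$ of the desired identity. For the second computation, I would fix a filtration $0 = F_0 \subset F_1 \subset \dots \subset F_N = P(\lambda)$ whose subquotients $F_i/F_{i-1} \simeq \Delta(\mu_i)$ with $\mu_i \geq \lambda$, furnished by (S2). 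Applying $\mathrm{Hom}_{\cat}(-,\bar\nabla(\mu))$ to each short exact sequence $0 \to F_{i-1} \to F_i \to \Delta(\mu_i) \to 0$ and using the vanishing $\mathrm{Ext}^1_{\cat}(\Delta(\mu_i),\bar\nabla(\mu))=0$ from (S3), the long exact sequence collapses at each step to a short exact sequence of Hom-spaces. Induction on $i$ then yields
\[
\dim \mathrm{Hom}_{\cat}(P(\lambda), \bar\nabla(\mu)) \;=\; \sum_{i=1}^{N} \dim \mathrm{Hom}_{\cat}(\Delta(\mu_i), \bar\nabla(\mu)).
\]
By (S3) the summand on the right is $1$ precisely when $\mu_i = \mu$ and zero otherwise, so the sum counts the number of indices $i$ with $\mu_i = \mu$, that is, $[P(\lambda):\Delta(\mu)]$. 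Equating the two computations of $\dim \mathrm{Hom}_{\cat}(P(\lambda), \bar\nabla(\mu))$ yields the claimed reciprocity.

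The central algebraic input is transparent, so I expect the main care to be about finiteness and well-definedness: one has to know that $\bar\nabla(\mu)$ has finite length (so that $[\bar\nabla(\mu):L(\lambda)]$ makes sense), that the filtration of $P(\lambda)$ is finite (so that the sum is finite and $[P(\lambda):\Delta(\mu)]$ makes sense), and that the multiplicity $[P(\lambda):\Delta(\mu)]$ is independent of the chosen standard filtration. All three follow from the generalized finiteness condition on $(\Lambda,\leq)$ recalled in the footnote, which forces each subcategory $\cat^{\leq\nu}$ to be of finite length; independence of the filtration then follows a posteriori from the identity we have just proved, since its right-hand side is intrinsic.
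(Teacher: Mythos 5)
Your argument is correct and is precisely the standard CPS-style proof: compute $\dim\mathrm{Hom}_{\cat}(P(\lambda),\bar\nabla(\mu))$ once via the projective-cover property (giving $[\bar\nabla(\mu):L(\lambda)]$) and once via a $\Delta$-filtration of $P(\lambda)$ together with the Ext-vanishing (S3) (giving $[P(\lambda):\Delta(\mu)]$). The paper states the corollary without proof, citing \cite{CPS}, so there is nothing to contrast; your route is the canonical one.

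One caveat on the finiteness remarks: in the graded setting actually used in this paper (Example~\ref{ex::SL::HW}, with $\Upsilon=P_+\times\bZ$), neither $P(\lambda)$ nor its standard filtration is finite, and $\cat^{\leq\nu}$ is \emph{not} of finite length. The argument still goes through because all the relevant Hom-spaces and multiplicities are finite in each graded degree; one interprets $[P(\lambda):\Delta(\mu)]$ and $[\bar\nabla(\mu):L(\lambda)]$ as graded ($q$-)multiplicities and runs your computation degree by degree, exactly as the paper warns at the end of Example~\ref{ex::SL::HW}. Your appeal to the footnote's finiteness condition on $(\Lambda,\leq)$ alone does not force finite length, so that sentence should be replaced by the degree-by-degree finiteness observation.
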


\begin{cor}
\label{cor::HW::leq}	
	A subcategory $\cat^{\leq \lambda}$ of a highest weight category $\cat$ is a highest weight category.
\end{cor}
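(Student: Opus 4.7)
The plan is to check the definition of a highest weight category directly for $\cat^{\leq\lambda}$, indexed by the induced subposet $\Lambda^{\leq\lambda}\subset\Lambda$ (with $\pi$ restricted to the preimage of $\Lambda^{\leq\lambda}$). The required finiteness condition from the footnote is automatic, since $(\Lambda^{\leq\lambda})^{\leq\mu}=\Lambda^{\leq\mu}$ for $\mu\leq\lambda$ is finite by hypothesis on $\cat$.

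The first observation I would make is that for any $\mu\leq\lambda$, the Serre subcategory $(\cat^{\leq\lambda})^{\leq\mu}$ of $\cat^{\leq\lambda}$ coincides with the Serre subcategory $\cat^{\leq\mu}$ of $\cat$: both are generated inside $\cat$ by the simples $L(\nu)$ with $\pi(\nu)\leq\mu$. In particular, the standard and proper standard objects $\Delta(\mu)$, $\bar{\Delta}(\mu)$ and their costandard counterparts, defined intrinsically as projective covers and injective hulls of $L(\mu)$ in $\cat^{\leq\mu}$, are the same whether we view them as objects of $\cat$ or of $\cat^{\leq\lambda}$. Likewise the endomorphism algebra $\A_\mu$ is unchanged.

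Next I would verify the defining fully faithful condition on derived categories. For $M,N\in(\cat^{\leq\lambda})^{\leq\mu}=\cat^{\leq\mu}$ the inclusions fit into a diagram
\[
\mathrm{RHom}_{\cat^{\leq\mu}}(M,N)\;\longrightarrow\;\mathrm{RHom}_{\cat^{\leq\lambda}}(M,N)\;\longrightarrow\;\mathrm{RHom}_{\cat}(M,N),
\]
whose composition is an isomorphism because $\cat$ is a highest weight category (applied to $\mu$), and whose second arrow is an isomorphism for the same reason (applied to $\lambda$, since $M,N$ lie in $\cat^{\leq\lambda}$). By the two-out-of-three property, the first arrow is an isomorphism, which is precisely the requirement that $(\cat^{\leq\lambda})^{\leq\mu}\hookrightarrow\cat^{\leq\lambda}$ be fully faithful on derived categories.

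The argument is essentially formal, and I do not expect any serious obstacle; the only thing to be careful about is making sure we are really applying the defining property of HWC to the correct pair of inclusions. As a sanity check, one may equivalently verify condition (S4): for $\mu,\nu\leq\lambda$,
\[
\mathrm{Ext}^2_{\cat^{\leq\lambda}}(\Delta(\mu),\bar{\nabla}(\nu))\;\simeq\;\mathrm{Ext}^2_{\cat}(\Delta(\mu),\bar{\nabla}(\nu))\;=\;0,
\]
where the first isomorphism follows from the fully faithful embedding $\cat^{\leq\lambda}\hookrightarrow\cat$ on derived categories, and the vanishing on the right is (S4) for $\cat$. This reconfirms the result via a criterion that is often easier to apply in examples.
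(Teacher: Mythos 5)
Your proof is correct. The paper gives no explicit argument for this corollary, treating it as immediate from the definition; your verification that $(\cat^{\leq\lambda})^{\leq\mu}=\cat^{\leq\mu}$ and the two-out-of-three argument on the chain $\mathrm{RHom}_{\cat^{\leq\mu}}\to\mathrm{RHom}_{\cat^{\leq\lambda}}\to\mathrm{RHom}_{\cat}$ is exactly the standard formal argument that the authors leave implicit.
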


All aforementioned definitions were recollected in order to be able to formulate the following well known result~\cite{BBCKL,CI,Kh}.
\begin{example}
\label{ex::SL::HW}	
The category of finitely-generated graded $\fg[t]$-modules with finite-dimensional graded components is a highest weight
category with respect to the standard partial ordering of dominant weights $P_+$ that index the set of irreducible objects
	in this category:
	\begin{equation}
	\label{eq::def::ordering}
	\lambda \geq_{\fg} \mu \stackrel{def}{\Longleftrightarrow} \lambda -\mu \in P_+.
	\end{equation}
	\\
	A representation-theoretic description of standard  and proper standard  modules in these categories is given in the next subsection,
	where we use more common names and notation: \emph{global}  \emph{Weyl modules} $\bW_{\lambda}$ stand for standard modules
	$\Delta(\lambda)$ and \emph{local} \emph{Weyl modules} $W_{\lambda}$ are used instead of proper standard modules $\bar{\Delta}(\lambda)$.
Note also that an irreducible $\fg$ module $V_\la$ can show up as a subquotient of a graded $\fg[t]$ module in different homogeneous
components. Hence, strictly speaking, the set of irreducibles $\Upsilon$ in our category consists of pairs
$(\la,k)$, $\la\in P_+$, $k\in \bZ$.
One has an obvious forgetful map  $\pi:\Upsilon=P_+\times\mathbb{Z}\twoheadrightarrow P_+$ to the partially ordered set $(P_+,\leq_{\fg})$ that defines a structure of the Highest Weight Category.
In particular, in order to control the characters of the graded $\fg[t]$ modules (in particular, when using the BGG reciprocity) one has
to keep track of the graded component where an irreducible module $V_\la$ shows up.
\end{example}

\subsection{Weyl modules}\label{WeylModules}
In this section we consider representations of the current Lie algebra $\fg[t]=\fg\T\bC[t]$,
where $\fg$ is a simple finite-dimensional Lie algebra (we will also consider separately the case of $\fg=\mgl_n$).
For $x\in\fg$, $k\ge 0$ we denote by $xt^k$ the element $x\T t^k\in\fg[t]$.
For a dominant integral weight $\la$ we denote by $W_\la$ the corresponding
local Weyl module of highest weight $\la$ and by $\mathbb{W}_\la$ the global Weyl module (see e.g. \cite{CFK}).
The global Weyl module $\mathbb{W}_\la$ is cyclic $\fg[t]$ module with cyclic vector $w_\la$ of $\fh\T 1$ weight $\la$
and defining relations
\[
\fn_+\T\bC[t]. w_\la=0,\ (f_{-\al}\T 1)^{\langle \la,\al^\vee\rangle+1}w_\la=0.
\]
The defining relations for the local Weyl module $W_\la$ differ by the additional relation $\fh\T t\bC[t]. w_\la=0$.
Thus there is a natural surjective homomorphism of $\fg[t]$-modules $\bW_\la\to W_\la$.
We note that both local and global Weyl modules are graded by the action of the Cartan subalgebra.
Apart from that, Weyl modules enjoy an additional $q$-grading, defined by
${\rm deg}_q w_\la=0$ and ${\rm deg}_q xt^i=i$ (i.e. $xt^i$ increases the degree by $i$). We note that the subspace
of vectors of a fixed $q$-degree in a Weyl module is naturally a $\fg$-module.
For a graded $\fg$-module $M=\bigoplus_{k\in\mathbb{Z}} M[k]$ with
finite-dimensional graded components we denote $\ch_q M=\sum_{k\in \mathbb{Z}} q^k \ch M[k]$, where $\ch M[k]$ is the standard
character of the $\fg$-module $M[k]$.

The right global Weyl module $\bW_\la^o$ is defined in a similar fashion: this is a cyclic right $\fg[t]$-module with a cyclic vector
$w^o_\la$ of $\fh\T 1$ weight $\la$ subject to the relations:
\[
w^o_\la.\fn_-\T\bC[t]=0,\ w^o_\la.(e_{\al}\T 1)^{\langle \la,\al^\vee\rangle+1}=0.
\]
Similarly one defines local right Weyl modules just adding additional relation $w_\la^o.\fh\T t\bC[t]=0$.
One has the following lemma (compare with Remarks \ref{opp1} and \ref{opp2}).
\begin{lem}\label{Wopp}
One has the isomorphisms of vector spaces $\bW_\la^o\simeq \bW_{\la^*}$, $W_\la^o\simeq W_{\la^*}$. The right module structure
on the Weyl modules $\bW_{\la^*}$ and $W_{\la^*}$ is obtained by  negating all the Lie algebra operators.
\end{lem}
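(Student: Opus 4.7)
The plan is to prove the stated identification by constructing an explicit isomorphism of right $\fg[t]$-modules $\Phi \colon \bW_\la^o \to \mathrm{opp}(\bW_{\la^*})$, where $\mathrm{opp}(\bW_{\la^*})$ denotes $\bW_{\la^*}$ endowed with the right action $v \cdot x := -xv$. The geometric input is the choice of cyclic generator in $\mathrm{opp}(\bW_{\la^*})$: since the defining property of the global Weyl module gives $\U(\fg\T 1) \cdot w_{\la^*} \simeq V_{\la^*}$, I take $v_0$ to be a nonzero lowest-weight vector of this copy of $V_{\la^*}$, so that $v_0$ has $\fh$-weight $w_0 \la^* = -\la$, i.e.\ right weight $\la$ under the opp action.

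To verify that $v_0$ satisfies the defining relations of $\bW_\la^o$ in $\mathrm{opp}(\bW_{\la^*})$, I would rely on a weight-support argument. As a $\fg$-module, $\bW_{\la^*}$ is integrable and decomposes into irreducibles $V_\mu$ with $\mu \le \la^*$; for each such $\mu$ one has $\mathrm{Conv}(W\mu) \subseteq \mathrm{Conv}(W\la^*)$ and $\mu \equiv \la^*$ modulo the root lattice, so the weight support of $V_\mu$ is contained in that of $V_{\la^*}$. Hence every $\fh$-weight of $\bW_{\la^*}$ lies in the weight polytope of $V_{\la^*}$, bounded below by $-\la$ and above by $\la^*$. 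The vectors $(\fn_-\T t^k) v_0$ and $(e_{\al_i})^{\la(\al_i^\vee)+1} v_0$ would carry $\fh$-weights strictly outside this polytope and must vanish; combined with $v_0 \cdot h = \la(h) v_0$, these are exactly the defining relations of $\bW_\la^o$. The universal property then yields a right-module homomorphism $\Phi$ with $\Phi(w_\la^o) = v_0$. Surjectivity is clear since $v_0 \in V_{\la^*}$ can be raised to a nonzero multiple of $w_{\la^*}$ by some element of $\U(\fn_+)$, and $w_{\la^*}$ generates $\bW_{\la^*}$.

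The main obstacle is injectivity of $\Phi$, and I would handle it by matching bigraded (weight plus $q$-degree) dimensions $\dim_q \bW_\la^o = \dim_q \bW_{\la^*}$ via two independent twist constructions. First, the Chevalley anti-involution $\sigma\colon \fg \to \fg$ fixing $\fh$ and swapping $e_{\al_i} \leftrightarrow f_{\al_i}$, extended to $\fg[t]$ by $\sigma(x\T t^k) := \sigma(x)\T t^k$, turns any left module into a right module via $v \cdot x := \sigma(x) v$; the generator $w_\la \in \bW_\la^\sigma$ satisfies the $\bW_\la^o$ relations, and applying $\sigma$ symmetrically produces surjections in both directions, giving $\bW_\la^o \simeq \bW_\la^\sigma = \bW_\la$ as bigraded vector spaces. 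Second, the diagram automorphism $\eta$ associated to $-w_0$ (trivial on the $t$-direction) twists $\bW_\la$ to a left module whose cyclic vector satisfies the defining relations of $\bW_{\la^*}$, and the analogous universal-property argument yields $\bW_\la \simeq \bW_{\la^*}$ as bigraded vector spaces. Combining the two gives the required equality of dimensions, so the surjection $\Phi$ is an isomorphism. The statement $W_\la^o \simeq W_{\la^*}$ is obtained verbatim after imposing the additional local relations $\fh\T t\bC[t]\cdot w = 0$: the vector $v_0$ descends to $W_{\la^*}$, the weight support only shrinks under the quotient, and both twist arguments restrict to the local setting.
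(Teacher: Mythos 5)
Your argument is correct and is built around the same key idea as the paper's: the lowest-weight, $q$-degree-zero vector $u\in\bW_{\la^*}$ of weight $-\la$ together with the antipode twist (which in the paper shows up as the sign $(-1)^s$ and the reversal $y_s\cdots y_1$). The paper's proof is only a sketch: it writes down the correspondence $y_1\cdots y_s u\mapsto(-1)^s w_\la^o y_s\cdots y_1$ and asserts that it is an isomorphism, with no discussion of well-definedness or bijectivity, whereas you fill in both gaps. Your weight-polytope verification that $v_0$ is annihilated by all of $\fn_-\T t\bC[t]$ (not just by $\fn_-\T 1$) is exactly what is needed to invoke the universal property of $\bW_\la^o$, and your dimension count via the Chevalley anti-involution and the $-w_0$ diagram automorphism supplies the injectivity argument that the paper omits entirely. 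Two small remarks. First, the diagram-automorphism twist matches the weight-$\mu$ component of $\bW_{\la^*}$ with the weight-$w_0\mu$ component of $\bW_\la$, so closing the bigraded dimension count quietly uses the $W$-invariance of the $q$-graded $\fg$-character of a global Weyl module; you should state this. Second, the double twist is avoidable: running the very same universal-property construction symmetrically (the lowest-weight vector of $\bW_\la^o$ satisfies the $\bW_{\la^*}$-relations inside $\mathrm{opp}(\bW_\la^o)$) already produces a graded surjection in the opposite direction, and two graded surjections between spaces with finite-dimensional bigraded components force the isomorphism without appealing to the twists.
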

\begin{proof}
Let $w_0$ be the longest element in the Weyl group; recall $\la^*=-w_0\la$. Hence $\bW_{\la^*}$ contains a vector $u$ of weight
$-\la$ (the space of such vectors is one dimensional and of $q$-degree zero).
Now the Weyl module $\bW_{\la^*}$ is generated
from $u$ by the action of the universal enveloping algebra $\U(\fb_+)$. We also note that the map
\[
y_1\dots y_s u\mapsto (-1)^s w_\la^o y_s\dots y_1
\]
produces the isomorphism of vector spaces $\bW_{\la^*}\simeq \bW_\la^o$ with the desired link between the left-right actions.
The same argument works for the local Weyl modules and their opposite analogues.
\end{proof}

\begin{rem}
We note that $\bW_{\la^*}$ is very different from the restricted dual module $\bW^*_{\la}$ (for example, the former is cyclic module and the
latter is cocyclic).
\end{rem}

Recall the standard notation $(q)_n=\prod_{i=1}^n (1-q^i)$.
Let $\A=\U(\fh\T t\bC[t])$ be the universal enveloping algebra of the Lie algebra of currents over Cartan subalgebra with
trivial constant term. One has $\A=\bC[h_{\alpha_i}t^k]$, $1\le i\le r$, $k>0$.
Let $a$ be an element of $\A$. Then the submodule
$\U(\fg[t])aw_{\lambda}\subset \bW_\la$ is a cyclic module which satisfies all defining relations of $\bW_\lambda$.
Therefore $\A$ acts as algebra of endomorphisms of global Weyl module. More precisely, consider the map
\[
\varphi: \A\to \bW_\la,\ \varphi(a)=aw_{\lambda}.
\]
Let $\A_\la=\A/\ker \varphi$ be the quotient algebra. In particular, $\A_\la$ is isomorphic to the space of the $\fh$-weight $\la$
vectors in $\bW_\la$. Then the algebra $\A_\la$ acts on the global Weyl module $\bW_\la$ in such a way that this action commutes with the
$\fg[t]$-action.

We have the following properties of Weyl modules (see \cite{CFK,FL2,N}).
\begin{itemize}
\item $W_\la$ is finite-dimensional and $\dim W_\la=\prod_{i=1}^r (\dim W_{\om_i})^{\langle \la,\al_i^\vee\rangle}$;
\item  $\U(\fg)w_\la\simeq V_\la\subset W_\la$;
\item  the action of the algebra $\A_\la$ on $\bW_\la$ is free;
\item if $\la=\sum_{i=1}^r m_i\omega_i$, then $\A_\la$ is isomorphic to the tensor product
$\bigotimes_{i=1}^r \bC[x_{i,1},\dots,x_{i,m_i}]^{\mathfrak{S}_{m_i}}$ of
$r$ algebras of symmetric polynomials in $m_i$ variables; in particular, $h_{\alpha_i}t^k$ corresponds to $x_{i,1}^k+\dots+x_{i,m_i}^k$;
\item $\ch_q \mathbb{W}_\la=\ch_q W_\la\prod_{i=1}^r (q)^{-1}_{(\la,\al_i^\vee)}$;
\item the quotient of $\bW_\la$ by the action of $\A_\la^+$ (the augmentation ideal of $\A_\la$) is isomorphic to $W_\la$.
\end{itemize}

Finally, we denote by $P_\la$ the $\fg[t]$-module which is the projective cover of the irreducible module $V_\la$ (see e.g.
\cite{BBCKL,CI,Kh}).
Explicitly, $P_\la={\rm Ind}_{\fg}^{\fg[t]} V_\la$.
\begin{rem}
We have the isomorphism of vector space $P_\la\simeq V_\la\T (\fg\T t\bC[t])$.
\end{rem}

\subsection{The $\mgl_n$ Weyl modules}
In order to formulate the analogue of the Howe duality for current groups we will need the notion of
the global Weyl module over $\gl_n$. Let $\lambda=\sum_{i=1}^n \la_i \epsilon_i$ be a dominant integral weight for $\mgl_n$.
We define the global Weyl module as follows. Let $\bar \la=\sum_{i=1}^{n-1} (\la_i-\la_{i+1})\om_i$ be the corresponding
$\msl_n$ weight and consider the global Weyl module $\mathbb{W}_{\bar\lambda}$ for $\msl_n$. Let ${\rm Id}\in\mgl_n$ be the identity
matrix. In particular, $\mgl_n[t]=\msl_n[t]\oplus {\rm Id}[t]$, where ${\rm Id}[t]={\rm Id}\T\bC[t]$ commutes with $\msl_n[t]$
inside $\mgl_n[[t]]$.

In order to give the definition of the global Weyl module for $\mgl_n$ we need one more piece of notation. We define
$\varphi_k:{\rm Id}[t]\to \bC[x_1,\dots,x_k]$, ${\rm Id}\T t^i\mapsto x_1^i+\dots +x_k^i$.
Then the $\mgl_n[t]$ module $\bW_\la$ is defined as
\begin{equation}\label{Wmgln}
\mathbb{W}_{\lambda}=\mathbb{W}_{\bar\lambda} \otimes \U({\rm Id}[t])/\ker \varphi_{\la_n},
\end{equation}
where $\msl_n[t]$ acts on the first tensor factor and ${\rm Id}[t]$ on the second.

%\begin{rem}
%Yet another way to define $\bW_\la$ is as follows. Let us embed $\mgl_n$ into $\msl_{n+1}$ by the formula
%$A\mapsto A- E_{n+1,n+1}{\rm tr} A$, where $A\in\mgl_n$ and $E_{n+1,n+1}\in \mgl_{n+1}$ is the matrix unit located on the
%intersection of the last column and last row (in the lower right corner). Let $\mu=\sum_{i=1}^{n-1} (\la_i-\la_{i+1})\om_i + \la_n\om_n$
%be the $\msl_{n+1}$-weight. Then the $\mgl_n[t]$ Weyl module $\bW_\la$ can be realized inside $\msl_{n+1}[t]$ Weyl module $\bW_\mu$
%as the $\U(\mgl_n[t]$ closure of the cyclic vector $w_\mu\in \bW_\mu$.  We note that
%\[
%\A_\mu\simeq \A_{\bar\la}\otimes \U({\rm Id}[t])/\ker \varphi_{\la_n}.
%\]
%We also note that the algebra $\A_\mu$ acts freely on $\bW_\la$.
%\end{rem}
We define the right $\gl_n[t]$ Weyl modules in the same way as we did in the $\msl_n[t]$ situation.

\subsection{Current groups}
Our references in this section are \cite{Kum1,Kum2}.
Let $G$ be a simple algebraic group and let $\eO=\bC[[t]]$ be the ring of formal power series in a variable $t$.
Let $G(\eO)=G[[t]]$ be the corresponding group over $\eO$, sometimes referred to as the current group.
The group $G(\eO)$ is the set of algebra homomorphisms $\bC[G]\to\eO$. The current group can be
realized explicitly as follows. Let us consider a faithful representation of $G$ providing an embedding
$G\subset {\rm Mat}_n$ to the space of square matrices of size $n\times n$. Let $I_G$ be the defining ideal for this
embedding; in particular, $I_G$ can be realized as an ideal in the algebra of functions $R_n=\bC[{\rm Mat}_n]$.
The algebra $R_n$ can be naturally identified with the  polynomial ring $\bC[z_{i,j}]_{i,j=1}^n$
(where $z_{i,j}$ is a function on ${\rm Mat}_n$ returning the $(i,j)$-th entry of a matrix).

Let us introduce new infinite set of variables $z_{i,j}^{(k)}$, where $i,j=1,\dots n$ and $k\ge 0$.
We attach to each variable $z_{i,j}$ the formal power series $z_{i,j}(t)=\sum_{k\ge 0} z_{i,j}^{(k)}t^k$.
For a polynomial $P(z_{i,j})\in\bC[z_{i,j}]_{i,j=1}^n$ we denote by $P_m$ the coefficient of $t^m$ in $P(z_{i,j}(t))$, i.e.
\[
P(z_{i,j}(t))=\sum_{m\ge 0} P_m(z_{i,j}^{(k)})t^m.
\]
\begin{example}
Let $G=SL_2$ with the standard embedding into ${\rm Mat}_2$. Then $I_G$ is generated by a single polynomial
$z_{1,1}z_{2,2}-z_{1,2}z_{2,1}-1$. The polynomials $P_m$ are the coefficients of the series
$z_{1,1}(t)z_{2,2}(t)-z_{1,2}(t)z_{2,1}(t)-1$. For example,
\begin{gather*}
P_0=z_{1,1}^{(0)}z_{2,2}^{(0)}-z_{1,2}^{(0)}z_{2,1}^{(0)}-1,\\
P_1=z_{1,1}^{(0)}z_{2,2}^{(1)}+z_{1,1}^{(1)}z_{2,2}^{(0)}-z_{1,2}^{(0)}z_{2,1}^{(1)}-z_{1,2}^{(1)}z_{2,1}^{(0)},\\
P_2=z_{1,1}^{(0)}z_{2,2}^{(2)}+z_{1,1}^{(1)}z_{2,2}^{(1)}+z_{1,1}^{(2)}z_{2,2}^{(1)}-z_{1,2}^{(0)}z_{2,1}^{(2)}-z_{1,2}^{(1)}z_{2,1}^{(1)}-
z_{1,2}^{(2)}z_{2,1}^{(1)}.
\end{gather*}
\end{example}
We note that the polynomial ring $\bC[z_{i,j}^{(k)}]$, $i,j=1,\dots n$, $k\ge 0$ can be naturally identified with the algebra
of functions on ${\rm Mat}_n(\eO)={\rm Mat}_n[[t]]$.
Let $I_G(\eO)\subset \bC[{\rm Mat}_n(\eO)]$ be the ideal
generated by all polynomials $P_m$, $P\in I_G$, $m\ge 0$.
Then the $\bC$ points of the affine scheme associated to the quotient ring $\bC[z_{i,j}^{(k)}]/I_G(\eO)$
form the group $G(\eO)$. One gets an obvious embedding $G(\eO)\subset {\rm Mat}_n[[t]]={\rm Mat}_n\T\bC[[t]]$.
We note that the scheme structure defined by the ideal $I_G(\eO)$ is reduced (see \cite{Oo}).

\begin{rem}
Let $\bW_\la^*$ be the restricted dual module (the direct sum of duals of the homogeneous components with respect
to the $q$-grading). The group $G(\eO)$ acts on $\bW_\la^*$. However, it does not act on the Weyl module itself
(the action produces infinite sums). To fix this problem one has to consider the completed global Weyl module with respect
to the $q$-grading.  However, given a vector $v\in \bW_\la$, a functional $\xi\in\bW_\la^*$ and an element $A\in G(\eO)$
the matrix element $\xi(gv)$ is well defined.
\end{rem}

\section{Peter-Weyl theorem for current groups}\label{PWC}
\subsection{Characters}
Let $G$ be a simple algebraic group and let $G(\eO)=G[[t]]$ be the corresponding current group.
Let ${\rm ev}_0:G(\eO)\to G$ be the $t=0$ evaluation morphism. We also have a natural embedding $G\to G[[t]]$ as
the set of constant currents.
We denote by $G(\eO)_{{\rm id}}$  the preimage ${\rm ev}_0^{-1}$ of the identity element ${\rm id}\in G$.

\begin{lem}
$G(\eO)$  is isomorphic to the semi-direct product $G(\eO)_{{\rm id}}\rtimes G$. In particular, every element
$A\in G(\eO)$ can be uniquely written as $A=Bg$, where $g={\rm ev}_0 A\in G$, $B\in G(\eO)_{{\rm id}}$.
\end{lem}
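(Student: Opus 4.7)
The plan is to recognize this as the standard splitting statement for the short exact sequence
\[1 \longrightarrow G(\eO)_{\mathrm{id}} \longrightarrow G(\eO) \xrightarrow{\ \mathrm{ev}_0\ } G \longrightarrow 1\]
of group schemes, where the splitting is provided by the constant-current embedding.

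First, I would verify that $\mathrm{ev}_0$ is a surjective homomorphism of groups. Writing $G(\eO)$ as algebra homomorphisms $\bC[G]\to\eO$, the map $\mathrm{ev}_0$ is postcomposition with the quotient $\eO\to\eO/(t)=\bC$, which clearly preserves the group operation. Surjectivity is immediate from the natural embedding $\iota:G\hookrightarrow G(\eO)$ of constant currents, corresponding on the algebra side to the inclusion $\bC\hookrightarrow\eO$. This $\iota$ is a group homomorphism satisfying $\mathrm{ev}_0\circ\iota=\mathrm{id}_G$, so it splits the sequence. By definition, the kernel of $\mathrm{ev}_0$ is exactly $G(\eO)_{\mathrm{id}}$.

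Given the splitting, the semi-direct product structure $G(\eO)\simeq G(\eO)_{\mathrm{id}}\rtimes G$ is automatic, with $G$ acting on the normal subgroup $G(\eO)_{\mathrm{id}}$ by conjugation inside $G(\eO)$. For the existence part of the decomposition $A=Bg$, given $A\in G(\eO)$ one sets $g:=\mathrm{ev}_0(A)\in G$ and $B:=Ag^{-1}$ (viewing $g$ as a constant current via $\iota$); then $\mathrm{ev}_0(B)=\mathrm{ev}_0(A)\,g^{-1}=\mathrm{id}$, so $B\in G(\eO)_{\mathrm{id}}$. Uniqueness is equally straightforward: if $A=B_1g_1=B_2g_2$ with $B_i\in G(\eO)_{\mathrm{id}}$ and $g_i\in G$, applying $\mathrm{ev}_0$ yields $g_1=g_2$, whence $B_1=B_2$.

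There is no real obstacle in this argument: the entire content is that $\iota$ is a set-theoretic (and scheme-theoretic) section of $\mathrm{ev}_0$ that is also a group homomorphism, which follows at once from the compatibility of the two maps with the algebra maps $\bC\hookrightarrow\eO\twoheadrightarrow\bC$. The only thing worth double-checking is that these assertions really are morphisms of schemes and not just of abstract groups; this is handled by observing that the embedding $G\subset{\rm Mat}_n$ used to define $G(\eO)$ via the ideal $I_G(\eO)$ makes both $\iota$ (sending $z_{i,j}\mapsto z_{i,j}^{(0)}$) and $\mathrm{ev}_0$ (sending $z_{i,j}^{(k)}\mapsto 0$ for $k\ge 1$, $z_{i,j}^{(0)}\mapsto z_{i,j}$) into morphisms of schemes over the reduced scheme structure recalled just before the statement.
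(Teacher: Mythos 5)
Your proof is correct and follows essentially the same route as the paper's, which simply observes that $G(\eO)_{\mathrm{id}}$ is normal and that $A(\mathrm{ev}_0 A)^{-1}\in G(\eO)_{\mathrm{id}}$ for any $A$. You merely spell out in more detail the splitting of the short exact sequence via the constant-current embedding and check the scheme-theoretic compatibility, which the paper leaves implicit.
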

\begin{proof}
Obviously, the group  $G(\eO)_{{\rm id}}$ is normal and for any $A\in G(\eO)$ one has $A ({\rm ev}_0 A)^{-1}\in G(\eO)$.
\end{proof}

The group $G(\eO)$ naturally acts on $\bC[G(\eO)]$ from the left and from the right, namely
\[
\left((g_1,g_2) \Psi\right)(A) = \Psi(g_1^{-1}Ag_2^{-1}),\ g_1,g_2,A\in G(\eO).
\]
In particular, if $A=Bg$, $B\in G(\eO)_{{\rm id}}$, $g\in G$, and $g_1,g_2\in G$, then
\[
\left((g_1,g_2) \Psi\right)(A) = \Psi\left((g_1^{-1}Bg_1)(g_1^{-1}gg_2^{-1})\right).
\]
Thus the $G\times G$ action on $G[[t]]$ is written as the standard
$G\times G$ action on $G$ and the diagonal action on $G(\eO)_{{\rm id}}$. We also note that there exists a natural $\bC^*$ action
on the space of functions $\bC[G(\eO)]$ given by the loop rotation (i.e. an element $z\in \bC^*$ scales $t$: $t\mapsto tz$).
We thus obtain the action of $G\times G\times \bC^*$ on $\bC[G(\eO)]$.
This gives $\bC[G(\eO)]$ the structure of graded $G \de G$ bimodue.
In what follows for a graded $G \de G$ bimodue
$U$ we use the notation $\ch_q U=\sum_{k\in\bZ} q^k \ch_{G\times G} U_k$, where $U_k\subset U$ consist of vectors $u$ such
that $\bC^*$ acts  on $u$ via the character $z\mapsto z^k$. Similarly, for the graded $G$ module $U$
we use the notation  $\ch_q U=\sum_{k\in\bZ} q^k \ch_{G} U_k$.

For a Lie algebra $\fa$ we denote by $t\fa[t]$ the Lie algebra $\fa\T t\bC[t]$.
Also for a vector space $V$ we write $V[t]$ for $V\T\bC[t]$ and $tV[t]$ for $V\T t\bC[t]$.
\begin{lem}\label{unip}
The character of the space of $\bC[G(\eO)_{{\rm id}}]$ with respect to the group $G\times \bC^*$ (with the diagonal
$G$ action) is equal to $\ch_q S(t\fg[t])$.
\end{lem}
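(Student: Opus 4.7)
The plan is to reduce everything to the exponential map between the pro-unipotent group $G(\eO)_{\mathrm{id}}$ and its pro-nilpotent Lie algebra $t\fg[t]$.

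First, I would observe that $G(\eO)_{\mathrm{id}}$ is by definition the first congruence subgroup of the pro-algebraic group $G(\eO)$, i.e.\ the kernel of the evaluation morphism $\mathrm{ev}_0$. Filtering by $t^N$ gives a projective system $G(\eO/t^N)_{\mathrm{id}}$ of finite-dimensional unipotent algebraic groups, and each successive quotient is a vector group. Hence $G(\eO)_{\mathrm{id}}$ is a pro-unipotent pro-algebraic group whose Lie algebra is $t\fg[t]=\bigoplus_{k\ge 1}\fg\otimes t^k$, filtered compatibly by the pro-nilpotent ideals $t^N\fg[t]$.

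Second, I would invoke the standard exponential map for pro-unipotent groups: the compatible exponentials $t\fg[t]/t^N\fg[t]\xrightarrow{\sim}G(\eO/t^N)_{\mathrm{id}}$ assemble into an isomorphism of pro-affine schemes
\[
\exp\colon t\fg[t]\xrightarrow{\;\sim\;}G(\eO)_{\mathrm{id}}.
\]
This exponential intertwines the adjoint action of $G$ on $t\fg[t]$ with the conjugation action of $G$ on $G(\eO)_{\mathrm{id}}$, and it intertwines the $\bC^*$-action by loop rotation on both sides (since rescaling $t\mapsto zt$ is compatible with the series defining $\exp$ at each level $N$). Pulling back functions therefore gives an isomorphism $\bC[G(\eO)_{\mathrm{id}}]\simeq \bC[t\fg[t]]$ of graded $G$-modules.

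Third, $\bC[t\fg[t]]$ is, by definition of a polynomial function on a graded pro-vector space, the symmetric algebra on the restricted graded dual: $\bC[t\fg[t]]=S\bigl(\bigoplus_{k\ge 1}(\fg\otimes t^k)^{*}\bigr)$, with the convention that a linear function on $\fg\otimes t^k$ has $\bC^*$-weight $k$. Finally, the Killing form gives a $G$-equivariant isomorphism $\fg\simeq \fg^{*}$, so $(\fg\otimes t^k)^{*}$ and $\fg\otimes t^k$ have the same $G\times\bC^*$-character for every $k\ge 1$; summing and taking the symmetric algebra yields $\ch_q \bC[G(\eO)_{\mathrm{id}}]=\ch_q S(t\fg[t])$.

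The only nontrivial point is the construction and equivariance of $\exp$ in the pro-algebraic setting; but this is well-documented for congruence subgroups of loop groups in the references cited (\cite{Kum1,Kum2}), so the obstacle is purely bookkeeping rather than substantive.
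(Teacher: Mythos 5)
Your argument is essentially the same as the paper's: both pass to the finite-level unipotent quotients $G(\bC[t]/t^N)_{\mathrm{id}}$, identify $\bC[G(\bC[t]/t^N)_{\mathrm{id}}]$ with the symmetric algebra on (the dual of) the nilpotent Lie algebra $\fg\otimes t\bC[t]/t^N$ via the exponential map, and then take the limit over $N$. The paper compresses all of this into three lines (leaving the exponential, the $G\times\bC^*$-equivariance, and the self-duality $\fg\simeq\fg^*$ implicit), so your write-up is simply a more detailed version of the same proof.
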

\begin{proof}
The group $G(\eO)_{{\rm id}}$ is pro-unipotent (the projective limit of unipotent groups $G(N)_{{\rm id}}=G(\bC[t]/t^N)_{{\rm id}}$.
One has $\bC[G(N)_{{\rm id}}]=S(\fg\T\bC[t]/t^N)$. Now passing to the inductive limit when $N$ goes to infinity
we arrive at the claim of our Lemma.
\end{proof}

\begin{rem}\label{SU}
For any Lie algebra $\fa$ the symmetric algebra $S(\fa)$ is isomorphic to the universal enveloping algebra $\U(\fa)$ as $\fa$ modules
(with respect to the adjoint action). Hence, $\bC[G(\eO)_{{\rm id}}]$ is isomorphic to $\U(t\fg[t])$ as $G\times \bC^*$ modules.
\end{rem}

Let $(q)_\infty=\prod_{i\ge 1} (1-q^i)$.
\begin{cor}
The character of $\bC[G(\eO)_{{\rm id}}]$ is given by the formula
\[
(q)_\infty^{-{\rm rk}\fg}\prod_{\al\in \Delta_+} \prod_{k>0} (1-q^ke^\al)^{-1}(1-q^ke^{-\al})^{-1}.
\]
\end{cor}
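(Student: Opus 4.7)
The plan is to combine Lemma~\ref{unip} with a direct character computation of $S(t\fg[t])$. By Lemma~\ref{unip} we already know that
\[
\ch_q \bC[G(\eO)_{{\rm id}}] \;=\; \ch_q S(t\fg[t]),
\]
so everything reduces to computing the graded $G$-character of the symmetric algebra on the Lie algebra $t\fg[t] = \bigoplus_{k\ge 1} \fg\T t^k$.

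First I would decompose $t\fg[t]$ as a graded $G$-module. Each summand $\fg\T t^k$ is just a copy of the adjoint representation $\fg$ placed in $q$-degree $k$. Using the root space decomposition $\fg = \fh \oplus \bigoplus_{\al\in\Delta_+}(\fg_\al\oplus\fg_{-\al})$, the character of $\fg\T t^k$ with respect to $\fh\times\bC^*$ equals
\[
q^k\!\left({\rm rk}\,\fg + \sum_{\al\in\Delta_+}(e^\al+e^{-\al})\right).
\]
Since the symmetric algebra factors as $S(t\fg[t]) = \bigotimes_{k\ge 1} S(\fg\T t^k)$, I would apply the standard character formula $\ch S(V) = \prod_\mu (1-e^\mu)^{-\dim V_\mu}$ (product over weights with multiplicity) to each factor. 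This gives
\[
\ch_q S(\fg\T t^k) = (1-q^k)^{-{\rm rk}\,\fg}\prod_{\al\in\Delta_+}(1-q^k e^\al)^{-1}(1-q^k e^{-\al})^{-1}.
\]
Taking the product over $k\ge 1$ and recognizing $\prod_{k\ge 1}(1-q^k)^{-{\rm rk}\,\fg}=(q)_\infty^{-{\rm rk}\,\fg}$ immediately yields the claimed formula.

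There is no real obstacle here; the step most worth pausing on is simply the observation that the weight spaces of $\fg$ all have multiplicity one (for roots) or multiplicity ${\rm rk}\,\fg$ (for the Cartan), so the infinite product converges termwise as a formal series in $q$ with coefficients in the character ring of $\fh$. No torsion or convergence subtlety intervenes because Lemma~\ref{unip} already identifies $\bC[G(\eO)_{{\rm id}}]$ with a symmetric algebra, not merely with an associated graded; and Remark~\ref{SU} (if one prefers to work with $\U(t\fg[t])$ instead of $S(t\fg[t])$) gives the same character by the PBW theorem.
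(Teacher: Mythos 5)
Your proof is correct and matches the (implicit) argument of the paper: the corollary is stated immediately after Lemma~\ref{unip} and Remark~\ref{SU} with no separate proof, precisely because it reduces, as you note, to the standard character formula for the symmetric algebra on $t\fg[t]$ together with the root-space decomposition of $\fg$. Nothing to add.
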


Recall that $P_\la$ denotes the projective cover of the irreducible highest weight module $V_\la$.

\begin{prop}\label{func}
One has the isomorphism of graded $G \de G$ bimodues:
\[
\bC[G(\eO)]\simeq \bigoplus_{\la\in P_+} P_\la\T V_\la^o.
\]
\end{prop}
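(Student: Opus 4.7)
The plan is to use the semidirect product decomposition $G(\eO)\simeq G(\eO)_{\rm id}\rtimes G$ from the preceding lemma, which yields a vector-space isomorphism
$$\bC[G(\eO)]\simeq \bC[G(\eO)_{\rm id}]\T \bC[G].$$
First I would trace how the $G\times G$-bimodule structure and the loop rotation grading transport under this identification. From the formula $(g_1,g_2).(Bg)=(g_1^{-1}Bg_1)(g_1^{-1}gg_2^{-1})$ derived at the beginning of this section, one reads off that on $\bC[G(\eO)_{\rm id}]$ the first copy of $G$ acts by conjugation (and the second copy acts trivially), while on $\bC[G]$ we recover the standard left-right $G\times G$-action.

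Next, I would apply the classical Peter-Weyl theorem to the finite-dimensional factor, $\bC[G]\simeq\bigoplus_{\la\in P_+}V_\la\T V_\la^o$, and invoke Remark~\ref{SU} (together with Lemma~\ref{unip}) to identify $\bC[G(\eO)_{\rm id}]\simeq \U(t\fg[t])$ as graded $G$-modules, with the adjoint $G$-action and the grading induced by loop rotation. Combining these yields
$$\bC[G(\eO)]\simeq \bigoplus_{\la\in P_+}\bigl(\U(t\fg[t])\T V_\la\bigr)\T V_\la^o,$$
where the first $G$ acts diagonally on $\U(t\fg[t])\T V_\la$ (adjoint on the first factor, natural on the second) and the second $G$ acts only on $V_\la^o$.

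The remaining step is to identify $\U(t\fg[t])\T V_\la$, with the diagonal $G$-action and the natural grading, with the projective cover $P_\la=\U(\fg[t])\T_{\U(\fg)}V_\la$ restricted to $G$. This is essentially a PBW computation: the Lie-algebra splitting $\fg[t]=\fg\oplus t\fg[t]$ gives a graded vector-space isomorphism $\U(\fg[t])\simeq \U(t\fg[t])\T \U(\fg)$, hence $P_\la\simeq \U(t\fg[t])\T V_\la$ as a graded vector space. For $x\in\fg$ and $u\in\U(t\fg[t])$, the identity $xu=\ad_x(u)+ux$ in $\U(\fg[t])$ yields $x.(u\T v)=\ad_x(u)\T v+u\T xv$, which is precisely the diagonal $\fg$-action we need.

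The hard part will be the first paragraph: bookkeeping the $G\times G$-action through the semidirect product to guarantee that the first copy acts \emph{diagonally} on $\U(t\fg[t])\T V_\la$ rather than in some twisted fashion. The conjugation term $g_1^{-1}Bg_1$ is exactly what generates the adjoint component of the diagonal action and thus synchronises the two sides via PBW. Once this is pinned down, compatibility with the $\bC^*$-grading from loop rotation is automatic, since loop rotation preserves the PBW factorisation and places $V_\la$ (and the whole of $\bC[G]$) in degree zero.
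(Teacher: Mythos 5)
Your argument is correct and follows essentially the same route as the paper's proof: semidirect-product decomposition of $G(\eO)$, identification of $\bC[G(\eO)_{\rm id}]$ with $\U(t\fg[t])$ via Lemma~\ref{unip} and Remark~\ref{SU}, classical Peter--Weyl on $\bC[G]$, and the PBW identification $P_\la\simeq\U(t\fg[t])\T V_\la$ as graded $G\times\bC^*$-modules. You spell out the diagonal-action bookkeeping (the conjugation $g_1^{-1}Bg_1$ matching the $\ad$-term in $x.(u\T v)=\ad_x(u)\T v+u\T xv$) that the paper leaves implicit, which is a helpful elaboration rather than a different approach.
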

\begin{proof}
By  Lemma \ref{unip}, Remark \ref{SU} and the Peter-Weyl theorem one has the isomorphisms of graded $G \de G$ bimodues:
\begin{equation*}
\bC[G(\eO)]\simeq \bC[G(\eO)_{{\rm id}}]\T \bC[G]
\simeq \U(t\fg[t])\T\bigoplus_{\la\in P_+} V_\la\T V_\la^o
\simeq\bigoplus_{\la\in P_+} P_\la\T V^o_\la,
\end{equation*}
where the last isomorphism is implied by the isomorphism of $G\times \bC^*$-modules $P_\la\simeq \U(\fg[t])\T V_\la$.
\end{proof}

\begin{rem}
We note that in Proposition \ref{func} we consider only the $G \de G$ bimodue (not the whole current group action).
\end{rem}

We consider the local and global Weyl modules $W_\la$ and ${\mathbb W}_\la$.
\begin{lem}\label{PW}
One has the isomorphism of graded $G \de G$ bimodues
\[
\bigoplus_{\la\in P_+} P_\la\T V_\la^o=\bigoplus_{\la\in P_+} \bW_\la\T W_\la^o.
\]
\end{lem}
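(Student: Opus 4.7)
The plan is to reduce the claimed isomorphism to an equality of graded characters. Both sides are graded $G\times G$-bimodules with finite-dimensional graded components, and since $G$ is reductive, each graded piece is semisimple as a $G\times G$-module; hence equality of characters implies the isomorphism.

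I would first compute the character of the left-hand side. Since $P_\la \simeq \U(t\fg[t])\otimes V_\la$ by PBW applied to the induced module (with $\fg$ acting diagonally on the tensor product), one has $\ch_q P_\la = \ch_q \U(t\fg[t]) \cdot \ch V_\la$. Combining with the classical Peter-Weyl identity $\sum_{\la\in P_+} \ch V_\la \cdot \ch V_\la^o = \ch \bC[G]$ gives
\[
\sum_{\la\in P_+} \ch_q P_\la \cdot \ch V_\la^o = \ch_q \U(t\fg[t]) \cdot \ch \bC[G],
\]
which is also the character of $\bC[G(\eO)]$ computed via Lemma~\ref{unip} and Remark~\ref{SU}.

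For the right-hand side, I would invoke the highest weight category structure of Example~\ref{ex::SL::HW} together with the BGG reciprocity (Corollary~\ref{cor::BGG}): the projective $P_\la$ admits a standard filtration by global Weyl modules with graded multiplicities $[P_\la : \bW_\mu]_q = [\bar\nabla_\mu : V_\la]_q$, where $\bar\nabla_\mu$ is the proper costandard. A graded contravariant duality on the category of graded $\fg[t]$-modules (the graded restricted dual composed with a Chevalley-type anti-involution of $\fg[t]$) identifies $\bar\nabla_\mu$ with the proper standard $W_\mu = \bar\Delta(\mu)$ at the level of graded $\fg$-characters, so $[\bar\nabla_\mu : V_\la]_q = [W_\mu : V_\la]_q$. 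Substituting and reorganizing the resulting double sum,
\[
\sum_{\la} \ch_q P_\la \cdot \ch V_\la^o \;=\; \sum_{\mu} \ch_q \bW_\mu \cdot \Bigl(\sum_{\la} [W_\mu : V_\la]_q\, \ch V_\la^o\Bigr) \;=\; \sum_{\mu} \ch_q \bW_\mu \cdot \ch_q W_\mu^o,
\]
since $W_\mu^o$ shares its graded $\fg$-composition multiplicities with $W_\mu$ (both have the same $\fh$-weight decomposition, cf.\ Lemma~\ref{Wopp}).

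The principal delicate point is the character-level identification $\ch_q \bar\nabla_\mu = \ch_q W_\mu$; this is not formal from the general highest weight category axioms but is supplied by the contravariant duality on graded $\fg[t]$-modules mentioned above. Alternatively, one may verify the equivalent identity $\sum_\la \ch_q \bW_\la \cdot \ch_q W_\la^o = \ch_q \U(t\fg[t]) \cdot \ch \bC[G]$ directly via the freeness $\ch_q \bW_\la = \ch_q \A_\la \cdot \ch_q W_\la$ and a $t=0$ specialization of the Macdonald Cauchy identity. Either route gives the equality of $G\times G$-bimodule characters and hence the isomorphism.
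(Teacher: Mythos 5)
Your proof is correct and follows essentially the same route as the paper: invoke BGG reciprocity to express $[P_\la : \bW_\mu]_q$ via composition multiplicities of local Weyl modules, then swap the order of summation. You are more explicit than the paper about the need to identify $\ch_q\bar{\nabla}(\mu)$ with $\ch_q W_\mu$; the paper folds this into its citation of the BGG reciprocity from \cite{BBCKL,CI,Kh} directly in the form $[W_\la:V_\mu]_q=[P_\mu:\bW_\la]_q$.
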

\begin{proof}
One has  the BGG-type reciprocity \cite{BBCKL,CI,Kh} for any $\la,\mu\in P_+$:
\[
[W_\la:V_\mu]_q=[P_\mu:\bW_\la]_q=[W^o_\la:V^o_\mu]_q,
\]
where $[\cdot,\cdot]_q$ denotes the $q$-multiplicity.
Therefore
\begin{align*}
\bigoplus_{\la\in P_+}  P_\la\T V_\la^o & \simeq \sum_{\la,\mu} [P_\la:\bW_\mu]_q\bW_\mu\T V_\la^o\\
& \simeq  \sum_{\la,\mu} [W^o_\mu:V^o_\la]_q \bW_\mu\T V_\la^o\\
& \simeq \bigoplus_{\la\in P_+} \bW_\la\T W^o_\la.
\end{align*}
\end{proof}

\subsection{Tensor products over highest weight algebras}
Recall the algebra $\A_\la$, which sits inside $\bW_\la$ as the algebra of highest weights (i.e.
$\A_\la$ is isomorphic to the weight $\la$ subspace of the global Weyl module $\bW_\la$).
The action of $\A_\la$ on $\bW_\la$ is free and the quotient
$\bW_\la/\A^+_\la$ is isomorphic to the local Weyl module $W_\la$.

\begin{lem}
The algebra $\A_\la$ acts freely on $\bW_\la^o$. The action is free and the quotient is isomorphic to $W_\la^o$.
\end{lem}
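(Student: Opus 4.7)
My strategy is to deduce this from the analogous statement for the left global Weyl module $\bW_{\la^*}$ via the vector space isomorphisms $\bW_\la^o \simeq \bW_{\la^*}$ and $W_\la^o \simeq W_{\la^*}$ of Lemma~\ref{Wopp}. Under these identifications, the right $\fg[t]$-action on the opposite module corresponds to the left $\fg[t]$-action on $\bW_{\la^*}$ with every Lie algebra operator negated, so all structural features that are invariant under negating the Lie algebra action will transfer across the isomorphism.

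By the listed properties of the left Weyl module, $\A_{\la^*}$ acts freely on $\bW_{\la^*}$ by $\fg[t]$-module endomorphisms, with quotient by $\A_{\la^*}^+$ isomorphic to $W_{\la^*}$. Because $\A_{\la^*}$ commutes with the Lie algebra action, the sign-twist does not disturb this commutation, and the action transports to a free action of $\A_{\la^*}$ on $\bW_\la^o$ by right $\fg[t]$-module endomorphisms, with quotient $W_\la^o$. To translate between $\A_{\la^*}$ and $\A_\la$, I would use the explicit description $\A_\mu \simeq \bigotimes_{i=1}^r \bC[x_{i,1},\dots,x_{i,\langle \mu,\al_i^\vee\rangle}]^{\mathfrak{S}_{\langle \mu,\al_i^\vee\rangle}}$ recalled in the listed properties, combined with the identity $\langle \la^*,\al_i^\vee\rangle = \langle \la,\al_{i^*}^\vee\rangle$ that follows from $\la^* = -w_0\la$ and the diagram involution $i\mapsto i^*$. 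This produces a canonical isomorphism $\A_\la \simeq \A_{\la^*}$ of graded commutative algebras sending augmentation ideal to augmentation ideal, and composing yields the desired free action of $\A_\la$ on $\bW_\la^o$ with quotient $W_\la^o$.

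The main obstacle is to verify that the sign-twist of Lemma~\ref{Wopp} is compatible with the $\A_{\la^*}$-action. Since $\A_{\la^*}$ acts as $\fg[t]$-module endomorphisms rather than by left multiplication inside $\U(\fg[t])$, the negation of the Lie algebra operators affects only the $\fg[t]$-structure and preserves the commutation with the endomorphism algebra; thus freeness and the identification of the quotient with $W_\la^o$ descend cleanly from $\bW_{\la^*}$ to $\bW_\la^o$.
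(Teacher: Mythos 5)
Your overall strategy is close in spirit to the paper's: both invoke Lemma~\ref{Wopp} and the explicit description of $\A_\mu$ as a tensor product of symmetric polynomial rings, and your identity $\langle \la^*,\al_i^\vee\rangle = \langle \la,\al_{i^*}^\vee\rangle$ giving $\A_\la\simeq\A_{\la^*}$ is correct. However, there is a gap in the transfer step. The lemma is about the \emph{intrinsic} $\A_\la$-action on $\bW_\la^o$, i.e.\ the one where $a\in\U(t\fh[t])$ acts by the unique right $\fg[t]$-endomorphism sending $w_\la^o\mapsto w_\la^o\cdot a$; this is the action needed later to form $\bW_\la\T_{\A_\la}\bW_\la^o$ and to have the relation $ht^k\,\bar w_\la=\bar w_\la\,ht^k$ of Lemma~\ref{relationsT}. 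Your transferred action is anchored differently: the $\A_{\la^*}$-action on $\bW_{\la^*}$ is defined by $w_{\la^*}\mapsto a\,w_{\la^*}$ at the \emph{highest} weight vector, while the isomorphism of Lemma~\ref{Wopp} identifies $w_\la^o$ with the \emph{lowest} weight vector $u=F\,w_{\la^*}$ of $V_{\la^*}\hookrightarrow\bW_{\la^*}$, where $F\in\U(\fn_-)$ is a constant-loop lowering operator. Since $F$ does not commute with $\U(t\fh[t])$, it is not automatic that $\psi(\varphi_a(u))=w_\la^o\cdot a$, so ``composing yields the desired free action'' is unjustified as stated; you have produced \emph{a} free action, not manifestly \emph{the} one the lemma asserts.

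The paper sidesteps this by working directly with the intrinsic object: it sets $\A'_\la$ equal to the weight-$\la$ subspace of $\bW_\la^o$ (the image of $a\mapsto w_\la^o\cdot a$), observes a surjection $\A_\la\twoheadrightarrow\A'_\la$ from the defining relations, and then uses the vector-space isomorphism $\bW_\la^o\simeq\bW_{\la^*}$ only to compare \emph{characters} of $\A_\la$ and $\A'_\la$, forcing the surjection to be an isomorphism. That route never needs to match two actions. To repair your argument you could either switch to the paper's character comparison, or replace Lemma~\ref{Wopp} by the Chevalley anti-automorphism $e_\alpha\leftrightarrow f_\alpha$, $h\mapsto h$ of $\fg[t]$, which carries $\bW_\la$ to $\bW_\la^o$ fixing $\U(t\fh[t])$ pointwise and hence visibly identifies the intrinsic actions and their quotients.
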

\begin{proof}
Let $\A_\la'$ be the weight $\la$ subspace of $W_\la^o$. This space has a structure of algebra via the surjective map
$w_\la^o.\U(t\fh[t])\to \A'_\la$. The defining relations of $\bW_\la^o$ imply the surjective homomorphism of algebras
$\A_\la\to\A'_\la$.
Now recall the isomorphism of vector spaces $\bW_\la^o\simeq \bW_{\la^*}$ (see Lemma  \ref{Wopp}).
Then $w_\la^o$ corresponds to the lowest weight
vector in $V_{\la^*}$ embedded into $\bW_{\la^*}$ as a $q$-degree zero subspace. Hence the characters of $\A_\la$ and $\A'_\la$
coincide.
\end{proof}

Now we introduce the main ingredient to formulate the $G(\eO)$ analogue of the Peter-Weyl theorem.
\begin{dfn}
We define the $\fg[t]\de \fg[t]$-bimodule
\[
T_\la=\bW_\la\T_{\A_\la} \bW_\la^o.
\]
\end{dfn}

Below we use the following simple observation:
\begin{lem}\label{Tcharacter}
We have the isomorphism of $q$-graded $\fg\de\fg$-bimodules:
\begin{equation}\label{TWW}
T_{\lambda}\simeq \bW_\la\T W_{\lambda}^o.
\end{equation}
\end{lem}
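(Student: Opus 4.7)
The plan is to upgrade the freeness of the $\A_\lambda$-action on $\bW_\lambda^o$ (established in the previous lemma) to a splitting of the short exact sequence
\[
0 \to \A_\lambda^+ \bW_\lambda^o \to \bW_\lambda^o \to W_\lambda^o \to 0
\]
that is equivariant for the \emph{constant} $\fg$-action and the loop rotation $\bC^*$-action, and then to push this splitting through the tensor product over $\A_\lambda$.

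First, I would record that the surjection $\bW_\lambda^o \twoheadrightarrow W_\lambda^o$ is a morphism of $q$-graded right $\fg$-modules (viewing $\fg \subset \fg[t]$ as constant currents). Both sides have finite-dimensional $q$-graded components, and $\fg$ is reductive, so in each fixed $q$-degree Weyl's complete reducibility produces a $\fg$-equivariant vector-space section. Assembling these yields a graded, $\fg$-equivariant linear section $s : W_\lambda^o \to \bW_\lambda^o$ of the projection.

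Next, the key input is that $\A_\lambda$ acts freely on $\bW_\lambda^o$ with quotient $W_\lambda^o$. Because the $\A_\lambda$-action commutes with the constant $\fg$-action, the section $s$ extends to a map of $\A_\lambda$-modules
\[
\tilde{s} : \A_\lambda \otimes_{\bC} W_\lambda^o \longrightarrow \bW_\lambda^o, \qquad a \otimes w \longmapsto a\cdot s(w),
\]
which is $\fg$-equivariant and respects the $q$-grading (with $\A_\lambda$ carrying its own grading coming from $\U(t\fh[t])$). Freeness of the $\A_\lambda$-action plus the fact that $s$ lifts a basis of the quotient forces $\tilde{s}$ to be an isomorphism; in graded terms, both sides have the same graded character $\ch_q \A_\lambda \cdot \ch_q W_\lambda^o$.

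Finally, tensoring the isomorphism $\bW_\lambda^o \simeq \A_\lambda \otimes_\bC W_\lambda^o$ over $\A_\lambda$ with $\bW_\lambda$ on the left gives
\[
T_\lambda = \bW_\lambda \otimes_{\A_\lambda} \bW_\lambda^o \simeq \bW_\lambda \otimes_{\A_\lambda} \bigl(\A_\lambda \otimes_{\bC} W_\lambda^o\bigr) \simeq \bW_\lambda \otimes_{\bC} W_\lambda^o,
\]
and this chain of isomorphisms is manifestly equivariant for the left $\fg$-action on $\bW_\lambda$, the right $\fg$-action on $W_\lambda^o$, and the loop rotation. The only subtle step is the second one, constructing a section that is simultaneously $\fg$-equivariant and graded; this is the mild obstacle, and it is resolved by the reductivity of $\fg$ together with the finite-dimensionality of the graded components. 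Note that we do not attempt to make $s$ equivariant for the full current algebra $\fg[t]$ — such a splitting does not exist in general — but the statement of the lemma only requires the constant $\fg \times \fg$-bimodule structure.
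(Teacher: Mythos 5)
Your argument is correct. The paper gives no proof of this lemma at all: the authors introduce it with the words ``Below we use the following simple observation,'' so the intended argument is left implicit, and your write-up is a faithful expansion of what is being taken for granted. The three inputs you isolate are exactly the right ones: reductivity of $\fg$ (so that the surjection $\bW_\lambda^o\twoheadrightarrow W_\lambda^o$ splits degree-by-degree as a map of right $\fg$-modules, which is possible because the graded pieces are finite-dimensional); freeness of the $\A_\lambda$-action on $\bW_\lambda^o$ together with the identity $\ch_q\bW_\lambda^o=\ch_q\A_\lambda\cdot\ch_q W_\lambda^o$ (so that the induced $\A_\lambda$-module map $\tilde s:\A_\lambda\otimes_\bC W_\lambda^o\to\bW_\lambda^o$ is a surjection between graded modules with equal Hilbert series, hence an isomorphism — a graded Nakayama argument); and the fact that the $\A_\lambda$-action commutes with the full $\fg[t]$-action, so in particular with the constant $\fg$, making $\tilde s$ bi-equivariant. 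Passing the isomorphism $\bW_\lambda^o\simeq\A_\lambda\otimes_\bC W_\lambda^o$ through $\bW_\lambda\otimes_{\A_\lambda}(-)$ is then automatic, since you are applying a functor to an isomorphism rather than to an exact sequence. Your closing caveat — that the section cannot be made $\fg[t]$-equivariant and that the lemma only claims a $\fg\text{-}\fg$-bimodule isomorphism — matches the remark immediately following the lemma in the paper, which warns that \eqref{TWW} fails at the level of $\fg[t]\text{-}\fg[t]$-bimodules. A slightly shorter route to the same conclusion would be to compute $\ch_q T_\lambda=\ch_q\bW_\lambda\cdot\ch_q W_\lambda^o$ directly from freeness and then invoke semisimplicity of $\fg\times\fg$ in each $q$-degree to conclude that equal graded characters force a graded bimodule isomorphism; your constructive version is a bit longer but no less valid.
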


\begin{rem}
The isomorphism \eqref{TWW} does not hold as the isomorphism of $\fg[t]\de\fg[t]$-bimodules.
\end{rem}

\begin{lem}\label{relationsT}
$T_{\lambda}$ is a cyclic $\fg[t]\de\fg[t]$-bimodule with the generator $\bar w_{\lambda}=w_\la\T w_\la^o$ and the following relations:
\begin{gather*}
\fn_+[t]\bar w_\la=0,\quad  \bar w_\la\fn_-[t]=0,\\
ht^0\bar w_\la=\bar w_\la ht^0=\la(h)\bar w_\la,\ h\in\fh,\\
f_{-\alpha}^{\langle \la, \al^{\vee} \rangle+1}\bar w_{\lambda}=0,\
\bar w_{\lambda}e_{\alpha}^{\langle \la, \al^{\vee} \rangle+1}=0, \alpha \in \Delta_+,\\
h\T t^k \bar w_{\lambda} = \bar w_\la h\T t^k,\ h\in\fh, k\ge 0.
\end{gather*}
% and in $\gl_n[[t]]-\gl_m[[t]]$ case one more relation for action of the currents over $tr$.
\end{lem}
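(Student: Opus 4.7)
The plan is to establish the two assertions of the lemma separately: cyclicity of $T_\lambda$ as an $\fg[t]\de\fg[t]$-bimodule generated by $\bar w_\lambda$, and verification that the listed relations hold on $\bar w_\lambda$.

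First I would establish cyclicity. Since $\bW_\lambda$ is cyclic on the left with generator $w_\lambda$ and $\bW_\lambda^o$ is cyclic on the right with generator $w_\lambda^o$, every elementary tensor in $\bW_\lambda\T_{\A_\lambda}\bW_\lambda^o$ has the form $(u w_\lambda)\T(w_\lambda^o u')=u\bar w_\lambda u'$ for some $u,u'\in\U(\fg[t])$, so $\bar w_\lambda$ generates $T_\lambda$ as a bimodule. Next I would verify the three groups of ``one-sided'' relations, namely those involving $\fn_+[t]$, $\fn_-[t]$, the Cartan weight action, and the divided-power nilpotency. These are immediate: the left-action relations on $\bar w_\lambda$ reproduce the defining relations of $\bW_\lambda$ applied to $w_\lambda$, while the right-action relations reproduce the defining relations of $\bW_\lambda^o$ applied to $w_\lambda^o$; the tensor product preserves both sets.

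The only nontrivial relation is the balancing identity $h\T t^k\bar w_\lambda=\bar w_\lambda h\T t^k$ for $h\in\fh$ and $k\ge 0$. For $k=0$ both sides equal $\lambda(h)\bar w_\lambda$ by the Cartan weight relations just verified. For $k\ge 1$ the element $h\T t^k$ lies in $\fh\T t\bC[t]$ and thus descends to an element $\tilde h_k\in\A_\lambda$; by the very construction of the $\A_\lambda$-action on the global Weyl module one has $(h\T t^k)w_\lambda=\tilde h_k\cdot w_\lambda$ in $\bW_\lambda$, and, by the right-module analogue established in the preceding lemma, $w_\lambda^o(h\T t^k)=w_\lambda^o\cdot\tilde h_k$ in $\bW_\lambda^o$. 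The balancing axiom of $\T_{\A_\lambda}$ now yields the desired equality.

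The main obstacle, if one reads the lemma as giving a \emph{presentation} of $T_\lambda$ (rather than merely asserting that the relations hold), is showing that these relations are defining. For this I would let $T'_\lambda$ be the bimodule freely generated by one element modulo the listed relations; there is an obvious surjection $T'_\lambda\twoheadrightarrow T_\lambda$. In $T'_\lambda$ the left $\fg[t]$-span of the generator is a quotient of $\bW_\lambda$ (from the first group of relations) and the right $\fg[t]$-span is a quotient of $\bW_\lambda^o$, while the balancing relation forces the tensor factorization over $\A_\lambda$. The surjection is therefore an isomorphism, and one can double-check this by comparing graded characters using the identification $T_\lambda\simeq \bW_\lambda\T W_\lambda^o$ from Lemma~\ref{Tcharacter}.
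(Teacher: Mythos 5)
Your proposal is correct and, in spirit, matches the paper's argument, but note that the paper's own proof consists of the single sentence ``Follows from the definition of the module $T_\la$,'' whereas you spell everything out. Your verification of the relations is exactly right: the one-sided relations come from the defining relations of $\bW_\la$ and $\bW_\la^o$ respectively, and the balancing relation $h\T t^k\,\bar w_\la=\bar w_\la\,h\T t^k$ is precisely the axiom of $\T_{\A_\la}$ applied to the image $\tilde h_k\in\A_\la$ of $h\T t^k$ (and it extends multiplicatively to all of $\U(\fh\T t\bC[t])$ because the left and right $\fg[t]$-actions commute).

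You also correctly flag that the statement must be read as asserting that these relations are \emph{defining} --- the paper leans on exactly this reading in the proof of Lemma~\ref{surjection}, where ${\rm gr}\,F_\la$ is declared a quotient of $T_\la$ after checking the relations on its generator. Your sketch of the converse surjection is the right idea, but it is a bit compressed: the clean way to run it is to build a bilinear map $\bW_\la\times\bW_\la^o\to T'_\la$, $(uw_\la, w_\la^o u')\mapsto u\bar w u'$, check it is well-defined using the left/right module maps $\bW_\la\to T'_\la$, $\bW_\la^o\to T'_\la$ obtained from the one-sided relations, and then observe that the last relation makes this map $\A_\la$-balanced, so it factors through $T_\la=\bW_\la\T_{\A_\la}\bW_\la^o$; since both composites fix the cyclic generators, the two surjections are mutually inverse. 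The character comparison via Lemma~\ref{Tcharacter} is a legitimate sanity check but is not by itself a proof, since the character of the abstractly presented $T'_\la$ is what you are trying to pin down.
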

\begin{proof}
Follows from the definition of the module $T_\la$.
\end{proof}

\begin{prop}
One has the equality of the characters of graded $G \de G$ bimodues:
\[
\ch_q \bC[G(\eO)]=\bigoplus_{\la\in P_+}\ch_q \bW_\la\T_{\A_\la} \bW^o_\la.
\]
\end{prop}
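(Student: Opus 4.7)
The statement is really a character identity that follows by stringing together results already established in the excerpt, so the plan is essentially to verify that the pieces fit.

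First, I would invoke Proposition~\ref{func}, which identifies $\bC[G(\eO)]$ as a graded $G\de G$-bimodule with $\bigoplus_{\la\in P_+} P_\la\T V_\la^o$. This reduces the problem to computing the character of the right-hand side, and in particular lets me forget the ambient current-group action and only track the $G\times G\times \bC^*$-character.

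Next, I would apply Lemma~\ref{PW}, which uses BGG reciprocity for the highest weight category of graded $\fg[t]$-modules (Example~\ref{ex::SL::HW}) to rewrite
\[
\bigoplus_{\la\in P_+} P_\la\T V_\la^o \;\simeq\; \bigoplus_{\la\in P_+} \bW_\la\T W_\la^o
\]
as graded $G\de G$-bimodules. Taking $q$-graded characters then gives $\ch_q\bC[G(\eO)] = \sum_{\la\in P_+}\ch_q \bW_\la\cdot \ch_q W_\la^o$.

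Finally, I would compare with the right-hand side of the statement by means of Lemma~\ref{Tcharacter}, which says that $T_\la = \bW_\la\T_{\A_\la}\bW_\la^o$ is isomorphic to $\bW_\la\T W_\la^o$ as a graded $\fg\de\fg$-bimodule. This isomorphism rests on the fact that $\A_\la$ acts freely on $\bW_\la^o$ with quotient $W_\la^o$ (proved just above the definition of $T_\la$), so that
\[
\bW_\la\T_{\A_\la}\bW_\la^o \;\simeq\; \bW_\la\T_{\A_\la}(\A_\la\T W_\la^o) \;\simeq\; \bW_\la\T W_\la^o
\]
as graded vector spaces (and in fact as $\fg\de\fg$-bimodules, though not as $\fg[t]\de\fg[t]$-bimodules). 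Taking $q$-graded characters gives $\ch_q T_\la = \ch_q\bW_\la\cdot \ch_q W_\la^o$, and summing over $\la\in P_+$ matches the previous computation, finishing the proof.

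There is no genuine obstacle here: all the work is concentrated in Proposition~\ref{func}, Lemma~\ref{PW} and Lemma~\ref{Tcharacter}. The only point that requires a bit of care is to confirm that the tensor product $\bW_\la\T_{\A_\la}\bW_\la^o$ does have a well-defined $q$-graded character (which is guaranteed by the freeness of the $\A_\la$-action together with the fact that each graded piece of $\bW_\la$, $\bW_\la^o$ and $\A_\la$ is finite dimensional, as recalled in the list of properties of Weyl modules). The upgrade from a character identity to the bimodule/filtration statement of Theorem~\textbf{A} is of course harder and will be handled separately.
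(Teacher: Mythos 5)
Your proof is correct and follows exactly the same route as the paper: the paper's own proof is a one-line citation of Proposition~\ref{func}, Lemma~\ref{PW}, and Lemma~\ref{Tcharacter}, which are precisely the three ingredients you chain together. The extra detail you supply (freeness of the $\A_\la$-action justifying Lemma~\ref{Tcharacter}, and finite-dimensionality of graded pieces ensuring the characters are well-defined) is sound and merely unpacks what the paper leaves implicit.
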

\begin{proof}
This is a direct consequence of Lemma \ref{PW}, Proposition \ref{func} and Lemma \ref{Tcharacter}.
\end{proof}

\subsection{The Peter-Weyl theorem for current groups}
Let $\bC[G(\eO)]^*$ be the restricted dual space of functions. More precisely, let
$\bC[G(\eO)]=\bigoplus_{k\ge 0} \bC[G(\eO)]_k$ be the direct sum decomposition with respect to the loop rotation.
In particular, each space $\bC[G(\eO)]_k$ is a
$\fg$ module with respect to the left action and each irreducible $\fg$-module $V_\la$ shows up finite number of times.
Then we set
\[
\bC[G(\eO)]^*=\bigoplus_{k\ge 0} \bigoplus_{\la\in P_+} V_\la^*\T [\bC[G(\eO)]_k:V_\la].
\]
Our goal is to prove the following theorem:
%\begin{thm}

{\it There exists a filtration $F_\la$ on the dual space $\bC[G(\eO)]^*$ such that}
\[
{\rm gr} F_\bullet\simeq \bigoplus_{\la\in P_+} T_\la.
\]
%\end{thm}

Let us consider the standard order on $P_+$ defined by $\la\ge\mu$ if and only if $\la-\mu=\sum_{i=1}^r k_i\al_i$, $k_i\in \bZ_{\ge 0}$.
We construct a decreasing filtration $F_\la$ on the dual space of functions $\bC[G(\eO)]^*$ labeled by $\la\in P_+$.
Namely, let us consider the subspace $\bC[G]^*$.
By the Peter-Weyl theorem we have the direct sum decomposition $\bC[G]^*=\bigoplus_{\la\in P_+} V_\la\T V_\la^o.$
\begin{rem}
One has an obvious isomorphism of $\fg\de\fg$ bimodules $\bC[G]\simeq \bC[G]^*$.
However, this becomes wrong after passing from $G$ to $G[[t]]$. The reason we consider the dual space here
is explained below.
\end{rem}

We note that the embedding $G\subset G(\eO)$ induces the surjective restriction homomorphism
$\bC[G(\eO)]\to \bC[G]$ and hence the embedding $\bC[G]^*\subset \bC[G(\eO)]^*$.
We define
\[
\bC[G]^*_{\ge\la}=\bigoplus_{\mu\ge\la} V_\mu\T V_\mu^o
\]
and consider the right hand side as a subspace of $\bC[G(\eO)]^*$ via the embedding $\bC[G]^*\subset \bC[G(\eO)]^*$.

\begin{dfn}
Define the decreasing filtration $F_\la$, $\la\in P_+$ on $\bC[G(\eO)]^*$ by the formula
\[
F_\la= \U(\fn_-[t]) \bC[G]^*_{\ge\la} \U(\fb[t]),
\]
where the right and left actions of the current algebras are used.
\end{dfn}

Our goal is to prove the isomorphism of $\fg[t]\de \fg[t]$-bimoduules
\[
F_\la/\sum_{\mu > \la} F_\mu\simeq T_\la.
\]

We prepare several lemmas.
Let $v_\la\in V_\la$ be the weight $\la$ (highest weight) vector with respect to the left $\fg$ action and let $v_\la^o\in V_\la^o$
be the weight $\la$ vector with respect to the right action.
\begin{rem}
Recall the identification of vector spaces $V_\la^o\simeq V_\la^*$. By definition, weight $\la$ subspace of $V_\la^o$
corresponds to the weight $-\lambda$ subspace of $V_\la^*$, i.e. to the lowest weight subspace. Hence
$v_\la^o$ is the lowest weight vector in $V_\la^*$. In particular,
\begin{equation}\label{n+n}
\U(\fn_-)v_\la=V_\la,\ v_\la^o\U(\fn)=V_\la^o.
\end{equation}
\end{rem}

\begin{lem}
One has
\[
F_\la= \U(\fb_-[t]) (v_\la\T v_\la^o) \U(\fb[t])=\U(\fb_-[t]) (v_\la\T v_\la^o) \U(\fn[t]).
\]
\end{lem}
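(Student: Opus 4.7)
My plan is to unwind the definition of $F_\la$ in two successive reductions. The first reduction replaces each Peter--Weyl block by its cyclic vector: \eqref{n+n} gives $V_\mu\otimes V_\mu^o=\U(\fn_-)(v_\mu\otimes v_\mu^o)\U(\fn_+)$ for every $\mu\in P_+$, and the inclusions $\U(\fn_-)\subset\U(\fn_-[t])$ and $\U(\fn_+)\subset\U(\fn[t])\subset\U(\fb[t])$ absorb these Borel-less factors into the current-algebra actions. This rewrites
\[
F_\la=\sum_{\mu\ge\la}\U(\fn_-[t])(v_\mu\otimes v_\mu^o)\U(\fb[t]).
\]
One then needs to argue that among these summands only $\mu=\la$ contributes a genuinely new generator: the summands for $\mu>\la$ sit inside $F_\mu$, which is itself a member of the decreasing chain $F_\bullet$, so they are absorbed by (higher pieces of) the filtration and can be processed inductively on the height of $\mu-\la$.

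The second reduction converts $\U(\fn_-[t])$ into $\U(\fb_-[t])$ on the left and $\U(\fb[t])$ into $\U(\fn[t])$ on the right. For this I would invoke the semi-direct decomposition $G(\eO)=G(\eO)_{\rm id}\rtimes G$ together with Lemma~\ref{unip} and Remark~\ref{SU}, which identify $\bC[G(\eO)]^*$ with something built from $\U(t\fg[t])$ and $\bC[G]^*$. Under this identification, the pulled-back extremal vector $v_\la\otimes v_\la^o$ is fixed by the $G(\eO)_{\rm id}$-factor, and a direct computation shows that the left and right actions of $h\otimes t^k$ on $v_\la\otimes v_\la^o$ coincide (up to a sign for $k\ge 1$, and as multiplication by $\la(h)$ for $k=0$), so that
\[
\U(\fh[t])(v_\la\otimes v_\la^o)=(v_\la\otimes v_\la^o)\U(\fh[t])
\]
as subsets of $\bC[G(\eO)]^*$. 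Combined with the PBW factorizations $\U(\fb_-[t])=\U(\fn_-[t])\U(\fh[t])$ and $\U(\fb[t])=\U(\fh[t])\U(\fn[t])$, this lets one shift $\U(\fh[t])$ freely across the generator and establishes both displayed equalities of the lemma.

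The main obstacle is this $\fh[t]$-exchange. The left and right $\fg[t]$-actions on $\bC[G(\eO)]^*$ always commute (they come from the bimodule structure), but that weaker fact alone does not imply they agree on any particular vector. The coincidence here relies on both the special position of $v_\la\otimes v_\la^o$ as a pullback from $\bC[G]^*$ and the explicit current-group semi-direct structure of $G(\eO)$; verifying the equality carefully is the technical heart of the argument, while the rest is a routine PBW manipulation.
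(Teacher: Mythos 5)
Your core strategy --- reduce to the extremal vector via \eqref{n+n}, then move the Cartan part across it using the $\fh[t]$-exchange together with the PBW factorizations $\U(\fb_-[t])=\U(\fn_-[t])\U(\fh[t])$ and $\U(\fb[t])=\U(\fh[t])\U(\fn[t])$ --- is exactly what the paper does, and you correctly identify the $\fh[t]$-exchange as the non-routine point. There are, however, two gaps in the execution.

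First, the ``first reduction'' does not close. After rewriting $F_\la$ as $\sum_{\mu\ge\la}\U(\fn_-[t])(v_\mu\T v_\mu^o)\U(\fb[t])$, you propose that the $\mu>\la$ summands ``are absorbed by higher pieces of the filtration and can be processed inductively''; but the containment $F_\mu\subset F_\la$ is a statement about $F_\la$, not about the asserted right-hand side $\U(\fb_-[t])(v_\la\T v_\la^o)\U(\fb[t])$, and a weight count rules out the absorption: the left action of $\fb_-[t]$ and the right action of $\fb[t]$ can only lower the respective $\fh$-weights, so every vector of $\U(\fb_-[t])(v_\la\T v_\la^o)\U(\fb[t])$ has both weights $\le\la$, whereas $v_\mu\T v_\mu^o$ with $\mu>\la$ has both weights equal to $\mu$. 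The paper's own (terse) proof works with the single block $V_\la\T V_\la^o$ only; the blocks with $\mu>\la$ never enter, and no induction of the kind you sketch is performed or needed. Second, your account of the $\fh[t]$-exchange is off on two points. The paper uses the exact identity $(ht^k v_\la)\T v_\la^o=v_\la\T(v_\la^o ht^k)$ for every $k\ge0$, with no sign discrepancy; and the proof is not via the semi-direct decomposition $G(\eO)=G(\eO)_{\rm id}\rtimes G$ (Lemma~\ref{unip} and Remark~\ref{SU} only give an isomorphism of $G\times G\times\bC^*$-modules, not of $G(\eO)$-bimodules, so they do not control the $k\ge1$ actions) but simply by noting that $v_\la$ and $v_\la^o$ carry the same $\fh$-weight $\la$ with respect to the left and right actions respectively. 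The careful version --- conjugation-invariance of the functional $v_\la\T v_\la^o$ under $H[[t]]$, reduced to the $q$-degree-zero part of a test function --- is spelled out in the proof of Lemma~\ref{surjection}.
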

\begin{proof}
We note that for an element $h\in\fh$ and any $k\ge 0$ one has $(ht^k v_\la)\T v_\la^o=v_\la\T (v_\la^o ht^k)$.
In fact, this is equivalent  to $(X v_\la)\T v_\la^o=v_\la\T (v_\la^o X)$ for $X\in H[[t]]$, which holds since $v_\la$ and $v_\la^o$
have the same $\fh$-weight with respect to the left and right actions.  Now \eqref{n+n} completes the proof.
\end{proof}

The next lemma shows that the whole space $\bC[G(\eO)]^*$ is generated from the zero level subspace by the action of
$\fn_-\T t\bC[t]\oplus \fb_+\T t\bC[t]$.

\begin{prop}
One has
\[
\bC[G(\eO)]^*=\sum_{\la\in P_+} \U(t\fn_-[t])(V_\la\T V_\la^o) \U(t\fb_+[t]).
\]
\end{prop}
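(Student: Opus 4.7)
The plan is to exploit the semidirect product structure $G(\eO) = G(\eO)_{\rm id} \rtimes G$ given by the preceding lemma, which yields an isomorphism $\bC[G(\eO)] \simeq \bC[G(\eO)_{\rm id}] \T \bC[G]$ of bigraded vector spaces compatible with the bimodule action. The constant subalgebra $\fg \subset \fg[t]$ acts diagonally on the tensor product, while $t\fg[t]$ acts only on the first factor (both for the left and right actions), because $G(\eO)_{\rm id}$ is normal with Lie algebra $t\fg[t]$ and the complement is $G$. Passing to the restricted duals and using the identification of $\bC[G(\eO)_{\rm id}]$ with $\U(t\fg[t])$ as a $G \times \bC^*$-module (Lemma~\ref{unip} and Remark~\ref{SU}), I would identify $\bC[G(\eO)]^*$ with $\U(t\fg[t]) \T \bC[G]^*$ as a bigraded $\fg[t]\de\fg[t]$-bimodule, so that the zero-level subspace $\bC[G]^* = \bigoplus_{\la\in P_+} V_\la \T V_\la^o$ sits as $1 \T \bC[G]^*$, and the left/right actions of $t\fg[t]$ on $\bC[G(\eO)]^*$ become the left/right regular actions on the $\U(t\fg[t])$ tensor factor.

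Next I would apply the PBW theorem to the triangular decomposition $t\fg[t] = t\fn_-[t] \oplus t\fh[t] \oplus t\fn_+[t]$ to obtain the vector-space factorization $\U(t\fg[t]) = \U(t\fn_-[t]) \cdot \U(t\fb_+[t])$. Hence acting on $1 \T (V_\la \T V_\la^o)$ by $\U(t\fn_-[t])$ from the left and $\U(t\fb_+[t])$ from the right produces $\U(t\fn_-[t]) \U(t\fb_+[t]) \T (V_\la \T V_\la^o) = \U(t\fg[t]) \T (V_\la \T V_\la^o)$. Summing over $\la \in P_+$ recovers $\U(t\fg[t]) \T \bC[G]^* \simeq \bC[G(\eO)]^*$, yielding the claimed equality.

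The main obstacle is upgrading the identification $\bC[G(\eO)_{\rm id}]^* \simeq \U(t\fg[t])$ from a vector-space/$G\times\bC^*$-module isomorphism to a genuine bimodule isomorphism intertwining the regular left and right actions. This amounts to Hopf duality for the pro-unipotent group $G(\eO)_{\rm id}$, which I would verify by passing to the inductive limit of the finite-dimensional unipotent quotients $G(N)_{\rm id}$ appearing in the proof of Lemma~\ref{unip}, together with the standard fact that in characteristic zero the restricted dual of the coordinate Hopf algebra of a unipotent algebraic group is canonically isomorphic to its universal enveloping algebra carrying the regular bimodule structure. Once this identification is in place, the PBW argument goes through cleanly and the proposition follows.
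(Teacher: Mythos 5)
The decisive step in your argument is the claim that, in the trivialization $\bC[G(\eO)]\simeq\bC[G(\eO)_{\rm id}]\T\bC[G]$, \emph{both} the left and the right action of $t\fg[t]$ become the regular actions on the $\U(t\fg[t])$ factor, so that applying $\U(t\fn_-[t])$ on one side and $\U(t\fb_+[t])$ on the other to $1\T\bC[G]^*$ fills out $\U(t\fn_-[t])\U(t\fb_+[t])\T\bC[G]^*$ by PBW. This claim is false, and it is precisely the point the paper's proof has to work around. Writing $A=Bg$ with $B\in G(\eO)_{\rm id}$, $g\in G$, the left action of $g_1\in G(\eO)_{\rm id}$ is indeed $(B,g)\mapsto(g_1B,g)$, but the right action of $g_2\in G(\eO)_{\rm id}$ is $(B,g)\mapsto\bigl(B\cdot(gg_2g^{-1}),\,g\bigr)$; it preserves the first factor only after conjugating by $g$. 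Dualizing, the right action of $t\fg[t]$ on $\U(t\fg[t])\T\bC[G]^*$ is \emph{not} right multiplication on the first tensor factor — it is twisted by the adjoint $G$-action, which mixes the two factors. (Using the other coset representatives $A=gB'$ swaps the roles: then the right action is untwisted but the left one is conjugated. There is no choice that makes both sides regular.) Note the paper flags exactly this: Proposition \ref{func} and the remark following it are careful to state the identification $\bC[G(\eO)]\simeq\bigoplus P_\la\T V_\la^o$ only as a $G\de G$-bimodule, \emph{not} as a $\fg[t]\de\fg[t]$-bimodule. Your "main obstacle" paragraph worries about the Hopf duality $\bC[G(\eO)_{\rm id}]^*\simeq\U(t\fg[t])$, which is indeed standard and fine; the real obstruction, the conjugation twist coming from the semidirect product, goes unaddressed.

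A concrete symptom: if your bimodule identification were correct, it would say at the group level that $\exp(t\fn_-[t])\cdot G\cdot\exp(t\fb_+[t])=G(\eO)$. But this set is only open dense in $G(\eO)$. Factoring $g_t=Ag_0B$ with ${\rm ev}_0 g_t=g_0$ amounts to $g_0^{-1}g_t\in\exp(\mathrm{Ad}_{g_0^{-1}}t\fn_-[t])\exp(t\fb_+[t])$, and this exhausts $G(\eO)_{\rm id}$ only when $\mathrm{Ad}_{g_0^{-1}}\fn_-\oplus\fb_+=\fg$, an open condition on $g_0$ that fails on the smaller Bruhat cells. The paper's proof handles this with a different mechanism: it argues by contradiction, picking a functional $\Psi$ of minimal positive loop degree annihilating the right-hand side, deducing from minimality that $\Psi$ is $\exp(t\fn_-[t])\times\exp(t\fb_+[t])$-invariant, and then using the open-dense set $\exp(t\fn_-[t])\,G\,\exp(t\fb_+[t])$ to force $\deg\Psi=0$, a contradiction. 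To salvage your line of reasoning you would have to prove the open-density statement and combine it with a minimality or continuity argument; a bare PBW factorization is not enough.
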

\begin{proof}
Assume that the right hand side is strictly contained in $\bC[G(\eO)]^*$.
Then there exists a function $\Psi\in\bC[G(\eO)]$ such that
\begin{equation}\label{Psi}
\Psi\left(\sum_{\la\in P_+} \U(t\fn_-[t])(V_\la\T V_\la^o) \U(t\fb_+[t])\right)=0.
\end{equation}
We note that the loop rotation invariants in $\bC[G(\eO)]$ coincide with the direct sum $\bigoplus_{\la\in P_+} V_\la\T V_\la^o$.
Therefore, we can (and will) assume that $\Psi$ is of strictly positive loop rotation degree. We will also
assume that this degree is the smallest possible (i.e. there is no function with the property \eqref{Psi}
of the loop rotation degree smaller than that of $\Psi$).
Equation \eqref{Psi} implies that
$\Psi$ is invariant with respect to the left-right action of the product of groups
$\exp(t\fn_-[t])\times \exp(t\fb_+[t])$. In other words, for any $g_t\in G(\eO)$ and $(A,B)\in  \exp(t\fn_-[t])\times \exp(t\fb_+[t])$
one has $\Psi(g_t)=\Psi(Ag_tB)$. In fact, \eqref{Psi} can be rewritten as
\[
(\U(t\fn_-[t])\Psi \U(t\fb_+[t]))\sum_{\la\in P_+} V_\la\T V_\la^o=0.
\]
Since $\Psi $ is of positive $q$-degree and of the smallest $q$-degree with the above property, we conclude
$\U(t\fn_-[t])\Psi \U(t\fb_+[t])=0$, which is equivalent to the claim that $\Psi$ is invariant with respect to the product
of groups $\exp(t\fn_-[t])\times \exp(t\fb_+[t])$.

We note that the set
\[
\exp(t\fn_-[t])G\exp(t\fb_+[t])\subset G(\eO)
\]
is open dense (recall that $G$ is considered as a subgroup of $G(\eO)$, the image of the $t=0$ evaluation map).
In fact, assume that $g_0={\rm ev}_0 g_t$ is an element from the open dense Bruhat cell $Bw_0B$ in $G$.
In particular,
\begin{equation}\label{oBc}
{\rm Ad}g_0. \fn_-\oplus \fb_+=\fg.
\end{equation}
Then the equality
$g_t=Ag_0B$ is equivalent to $g_t=g_0(g_0^{-1}Ag_0)B$. However, \eqref{oBc} implies
\[
{\rm Ad}g_0. t\fn_-[t]\oplus t\fb_+[t]=t\fg[t].
\]
Hence the kernel of ${\rm ev}_0$ can be written in the product from:
\[
G(\eO)_{{\rm id}}=(g_0^{-1} \exp(t\fn_-[t])g_0) \exp(t\fb_+[t]).
\]
Since $g_0^{-1}g_t\in G(\eO)_{{\rm id}}$, the presentation $g_t=g_0(g_0^{-1}Ag_0)B$ is possible.

Let $G(\eO)^0\subset G(\eO)$ be the open dense subset consisting of $g_t$ such that $g_0={\rm ev}_0g_t$ is in the open Bruhat cell.
Since the function $\Psi$ is invariant with respect to $\exp(t\fn_-[t])\times \exp(t\fb_+[t])$, we obtain for $g_t\in G(\eO)^0$:
\[
\Psi(g_t)=\Psi(Ag_0B)=\left((A^{-1}\times B^{-1})\Psi\right) (g_0)=\Psi(g_0).
\]
We conclude that $\Psi$ is invariant with respect to the $t=0$ evaluation morphism and hence its
q-degree (loop rotation degree) equals zero, which contradicts the assumption.
\end{proof}

Similar (even simpler) arguments imply the following modification of the Proposition above.
\begin{lem}
One has $\bC[G(\eO)]^*=\U(\fg[t]) \bC[G]^*$, i.e. the left action of the universal enveloping algebra of the
whole current algebra $\fg[t]$ on the degree zero subspace $\bC[G]^*\subset \bC[G(\eO)]^*$ produces the
whole dual space of functions on $G(\eO)$.
\end{lem}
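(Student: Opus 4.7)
The strategy parallels that of the preceding Proposition but is genuinely simpler, because only the left action appears and we do not need the Bruhat-cell argument. I proceed by contradiction: suppose the inclusion $\U(\fg[t])\bC[G]^* \subseteq \bC[G(\eO)]^*$ is strict. Then by the natural pairing between $\bC[G(\eO)]$ and its graded restricted dual, there exists a nonzero $\Psi \in \bC[G(\eO)]$ that annihilates $\U(\fg[t])\bC[G]^*$. Since the $q$-degree zero part $\bC[G] \subset \bC[G(\eO)]$ pairs nondegenerately with $\bC[G]^*$, the function $\Psi$ must have strictly positive $q$-degree; I choose one of minimal such degree.

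The annihilation condition $\langle \Psi, X\xi\rangle = 0$ for all $X \in \U(\fg[t])$ and $\xi \in \bC[G]^*$ is equivalent, via the adjointness of the contragredient left $\fg[t]$-action, to $\langle X\cdot\Psi, \xi\rangle = 0$ (up to signs). Since the embedding $\bC[G]^* \hookrightarrow \bC[G(\eO)]^*$ is dual to the restriction homomorphism $\bC[G(\eO)] \twoheadrightarrow \bC[G]$, this says precisely that $(X\cdot\Psi)|_G \equiv 0$ for every $X \in \U(\fg[t])$. Unpacking this in terms of iterated directional derivatives, we conclude that $\Psi$ and all its left-invariant derivatives along directions in $\fg[t]$ (of arbitrary order) vanish identically on the subgroup $G \subset G(\eO)$.

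To conclude, I invoke the semi-direct product decomposition $G(\eO) = G(\eO)_{\rm id} \rtimes G$ from the opening lemma of this section: every element of $G(\eO)$ has the unique form $Bg$ with $B \in G(\eO)_{\rm id} = \exp(t\fg[t])$ and $g \in G$, and the multiplication map $G(\eO)_{\rm id} \times G \to G(\eO)$ is an isomorphism of schemes. The algebraic function $\Psi$ factors through some finite-dimensional truncation $G(\bC[t]/t^N)$, so its expansion along the $G(\eO)_{\rm id}$-fiber at each point $g \in G$ is a genuine polynomial in finitely many coordinates whose coefficients are the $t\fg[t]$-derivatives of $\Psi$ at $g$. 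These derivatives all vanish by the previous step, forcing $\Psi \equiv 0$ on $G(\eO)_{\rm id} \cdot G = G(\eO)$, contradicting $\Psi \ne 0$.

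The main place that needs care is the last geometric step: one must verify that vanishing of all $t\fg[t]$-derivatives of $\Psi$ at every point of $G$ actually forces $\Psi$ itself to vanish on the pro-unipotent bundle $G(\eO) \to G$. This is straightforward once $\Psi$ is pushed down to the finite-dimensional quotient $G(\bC[t]/t^N)$, on which the relevant Taylor expansion is polynomial and finite, but the bookkeeping deserves explicit treatment.
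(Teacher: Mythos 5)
Your proof is correct, but it takes a genuinely different route than the paper's. The paper simply declares the lemma follows from ``similar (even simpler) arguments'' to the preceding Proposition, whose mechanism is: using minimality of the $q$-degree of $\Psi$, conclude that the full module $\U(t\fg[t])\Psi$ vanishes (not merely its restriction to $G$), i.e.\ $\Psi$ is invariant under $G(\eO)_{\rm id}=\exp(t\fg[t])$; then the decomposition $G(\eO)=G(\eO)_{\rm id}\cdot G$ forces $\Psi=\Psi|_G\circ{\rm ev}_0$, which has $q$-degree zero — a contradiction. (It is ``even simpler'' than the preceding Proposition because one does not need the open Bruhat cell to produce the factorization: $G(\eO)_{\rm id}\cdot G=G(\eO)$ is automatic.) You instead extract only the weaker statement $(X\Psi)|_G=0$ for all $X\in\U(\fg[t])$ — which follows from the adjunction alone and does not use minimality — and then conclude $\Psi\equiv 0$ by a fiberwise Taylor expansion on the truncations $G(\bC[t]/t^N)$. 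This buys you an argument that is arguably more elementary (no minimality-implies-invariance step, which in the paper is somewhat terse) at the cost of needing to unwind the identification between iterated $\U(t\fg[t])$-derivatives and Taylor coefficients along the pro-unipotent fiber (the PBW/symmetrization bookkeeping you flag at the end, which is routine for nilpotent Lie algebras where $\exp$ is a polynomial isomorphism). One small remark: you introduce the minimal-degree hypothesis on $\Psi$ but never use it — your argument works for any nonzero $\Psi$ annihilating $\U(\fg[t])\bC[G]^*$, since it derives $\Psi=0$ outright rather than a degree contradiction. That is harmless, but you could streamline by dropping it. Also note that the $\fg\otimes 1$-derivatives you include are tangent to $G$ and contribute nothing to the Taylor expansion along the fiber; only the $t\fg[t]$-derivatives matter there.
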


Finally, we need the following lemma.
\begin{lem}\label{surjection}
The quotient space ${\rm gr} F_\la=F_\la/\sum_{\mu > \la} F_\mu$ carries a natural structure of $\fg[t]\de \fg[t]$ cyclic bimodule
with cyclic vector $v_\la\T v_\la^o$. The bi-module ${\rm gr} F_\la$ is a quotient of $T_\la$.
\end{lem}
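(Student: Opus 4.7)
The plan is to construct a surjective $\fg[t]\de\fg[t]$-bimodule homomorphism $T_\la \twoheadrightarrow {\rm gr} F_\la$ sending the cyclic generator $\bar w_\la$ to $\bar v_\la := (v_\la \T v_\la^o) + \sum_{\mu > \la} F_\mu$. By the universal property recorded in Lemma \ref{relationsT}, this reduces to checking that (i) ${\rm gr} F_\la$ is a cyclic $\fg[t]\de\fg[t]$-bimodule generated by $\bar v_\la$ and (ii) $\bar v_\la$ satisfies each of the defining relations of $\bar w_\la$.

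Cyclicity in (i) is immediate from the preceding lemma, which gives $F_\la = \U(\fb_-[t])(v_\la\T v_\la^o)\U(\fn[t])$. To see that the $\fg[t]\de\fg[t]$-bimodule structure on $\bC[G(\eO)]^*$ descends to ${\rm gr} F_\la$ one has to show that each $F_\mu$ is a $\fg[t]$-subbimodule; this in turn follows from the cyclic description above together with PBW reordering $\U(\fn_+[t])\U(\fb_-[t])\subseteq \U(\fn_-[t])\U(\fh[t])\U(\fn_+[t])$ once one knows that $\fn_+[t]$ acting on the cyclic generator lands in $\sum_{\nu>\mu} F_\nu$. The latter is proved by combining the PBW-type decomposition $\bC[G(\eO)]^*=\sum_\mu \U(t\fn_-[t])(V_\mu\T V_\mu^o)\U(t\fb_+[t])$ from the preceding Proposition with a weight argument: any element of left $\fh$-weight $\ge \la+\al$ is a sum of terms $Y\xi Z$ with $Y\in\U(t\fn_-[t])$ of non-positive weight, $\xi\in V_\nu\T V_\nu^o$ and $Z\in\U(t\fb_+[t])$; matching weights forces $\nu\ge \la+\al$, whence $\xi\in \bC[G]^*_{\ge\la+\al}$ and $Y\xi Z\in F_{\la+\al}$.

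The relations in (ii) are then verified in turn. The Cartan eigenvalue conditions $(h\T 1)\bar v_\la=\la(h)\bar v_\la=\bar v_\la(h\T 1)$ and the Serre relations $(f_{-\al})^{\la(\al^\vee)+1}\bar v_\la=\bar v_\la(e_\al)^{\la(\al^\vee)+1}=0$ hold already at the level of $V_\la\T V_\la^o\subseteq \bC[G]^*$. Applying the weight estimate from the previous paragraph to the left-weight $\la+\al$ element $(e_\al\T t^k)(v_\la\T v_\la^o)$ shows it lies in $F_{\la+\al}\subseteq \sum_{\mu>\la}F_\mu$, so $\fn_+[t]\bar v_\la=0$; the right-sided relation $\bar v_\la\fn_-[t]=0$ is symmetric. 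The main technical obstacle is the commutativity $(h\T t^k)\bar v_\la=\bar v_\la(h\T t^k)$ for $h\in\fh$, $k\ge 1$: the difference of the two sides has left weight $\la$, so a priori only lies in $F_\la$, and one must show it projects to zero in ${\rm gr} F_\la$. This requires a refined analysis of the $(v_\la\T V_\la^o)\U(t\fb_+[t])$-component of the PBW expansion of both sides, exploiting that $v_\la\T v_\la^o$ is a diagonal matrix coefficient and is invariant in the appropriate sense under the simultaneous left-right action of the torus current algebra $\fh[t]$. Once all relations are verified, the universal property of $T_\la$ produces the desired surjection onto ${\rm gr} F_\la$.
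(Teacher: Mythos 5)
The proposal follows the same overall route as the paper: construct a surjection $T_\la\twoheadrightarrow{\rm gr}\,F_\la$ via the universal property of Lemma~\ref{relationsT}, then verify each defining relation for the class of $v_\la\T v_\la^o$. The cyclicity observation and the weight argument (via the decomposition $\bC[G(\eO)]^*=\sum_\mu \U(t\fn_-[t])(V_\mu\T V_\mu^o)\U(t\fb_+[t])$) showing $\fn_+[t]\bar v_\la\subset\sum_{\mu>\la}F_\mu$, together with the right-sided mirror for $\fn_-[t]$, are correct and essentially what the paper does.

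The problem is the final step, which you flag as the ``main technical obstacle'' and then do not complete. Saying that it ``requires a refined analysis of the $(v_\la\T V_\la^o)\U(t\fb_+[t])$-component of the PBW expansion of both sides, exploiting that $v_\la\T v_\la^o$ is a diagonal matrix coefficient'' names the right ingredient (the left and right $\fh$-weights of $v_\la\T v_\la^o$ coincide) but stops short of an actual argument; as written this is a placeholder, not a proof. Moreover, the framing slightly mischaracterises the situation: the paper's argument shows the commutativity relation $(h\T t^k)(v_\la\T v_\la^o)=(v_\la\T v_\la^o)(h\T t^k)$ holds exactly in $\bC[G(\eO)]^*$, not merely modulo $\sum_{\mu>\la}F_\mu$. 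Concretely, the paper reduces to showing $(v_\la\T v_\la^o)(\Phi)=(v_\la\T v_\la^o)(X^{-1}.\Phi.X)$ for $X\in H[[t]]$, notes that the functional $v_\la\T v_\la^o$ is supported in $q$-degree zero so one may replace $\Phi$ by its degree-zero part, and then uses that the left and right weights of $v_\la$ and $v_\la^o$ agree to conclude the conjugation by $X$ is invisible to this pairing. No PBW expansion or positional analysis of $F_\la$-components is needed, and you should supply this (or an equivalent) argument rather than deferring it.
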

\begin{proof}
We first note that $(\U(\fn_+[t])v_\la)\T v_\la^o$ belongs to $\sum_{\mu\ge \la} F_\mu$, since all the weights of $F_\la$ with respect
to the left action are smaller than or equal to $\la$. Similarly, $v_\la\T (v_\la^o \U(\fn_-[t]))$ sits inside $\sum_{\mu\ge \la} F_\mu$.
Since $F_\la$ is closed with respect to $\fb_-[t]\oplus\fb_+[t]$ action, we
conclude that $F_\la/\sum_{\mu > \la} F_\mu$ is a $\fg[t]\de\fg[t]$ bimodule.
The cyclicity is implied by the definition of $F_\la$. The last thing to check is that the defining relations of $T_\la$
from Lemma \ref{relationsT} are satisfied.
In fact, all the relations except for $a. v_\la\T v_\la^o = v_\la\T v_\la^o.a$ for any $a\in\A_\la$
are implied by the defining relations of the global Weyl modules. Let us prove the remaining relation.
Let us identify $v_\la\T v_\la^o\in V_\la\T V_\la^o\subset \bC[G]^*$ with the corresponding vector in $\bC[G(\eO)]^*$.
We need to show that for any $X\in H[[t]]$ and $\Psi\in\bC[G(\eO)]$ one has
\[
(v_\la\T v_\la^o) (X.\Psi)=(v_\la\T v_\la^o) (\Psi .X)
\]
for the left and right actions of the element $X$ on the function $\Psi$. This is equivalent to
$(v_\la\T v_\la^o) (\Phi)=(v_\la\T v_\la^o) (X^{-1}.\Phi .X)$ for $\Phi=X.\Psi$. Both left and right
hand sides depend only on the $q$-degree zero part $\Phi_0$ of $\Phi$. And for $\Phi_0$ the needed equality is clear since
the weights of $v_\la$ and $v_\la^o$ coincide.
\end{proof}

We conclude that the following theorem holds.
\begin{thm}\label{mainth}
The filtration $F_\la$ produces the desired isomorphism
\[
{\rm gr} \bC[G(\eO)]^*\simeq \bigoplus_{\la\in P_+} \bW_\la\T_{\A_\la} \bW_\la^o.
\]
\end{thm}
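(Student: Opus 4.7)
The plan is to combine the surjection constructed in Lemma \ref{surjection} with the character identity from the preceding Proposition, and then close the argument by a dimension count on each graded component. Lemma \ref{surjection} gives, for each $\la\in P_+$, a surjection of $\fg[t]\de\fg[t]$-bimodules $T_\la = \bW_\la\T_{\A_\la}\bW_\la^o \twoheadrightarrow {\rm gr} F_\la$; assembling these yields a surjection
\[
\bigoplus_{\la\in P_+} T_\la \twoheadrightarrow {\rm gr}\,\bC[G(\eO)]^*.
\]
The first step in turning this into an isomorphism is to verify that the filtration is exhaustive. Since $F_\la \subseteq F_0$ for every $\la\in P_+$, exhaustion reduces to $F_0 = \bC[G(\eO)]^*$, which I would deduce from the preceding lemma $\bC[G(\eO)]^* = \U(\fg[t])\bC[G]^*$ by combining the PBW factorization $\U(\fg[t]) = \U(\fn_-[t])\U(\fb[t])$ with the fact that $\bC[G]^*$ is $\U(\fb)$-stable under the constant part and that the $q$-positive part of the left $\U(\fb[t])$-action on $\bC[G]^*$ can be replaced, modulo elements already in $F_0$, by the right $\U(\fb[t])$-action that appears in the definition of $F_0$.

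Once exhaustion is in place, a character comparison closes the argument. On each fixed weight-and-$q$-degree component the filtration $\{F_\la\}$ is finite, so $\ch_q {\rm gr}\,\bC[G(\eO)]^* = \ch_q \bC[G(\eO)]^*$. The assembled surjection then forces the inequality $\sum_\la \ch_q T_\la \ge \ch_q {\rm gr}\,\bC[G(\eO)]^*$, while the Proposition immediately preceding the theorem gives the equality $\sum_\la \ch_q T_\la = \ch_q \bC[G(\eO)]^*$. This forces the surjection to preserve graded dimensions, so each surjection $T_\la \twoheadrightarrow {\rm gr} F_\la$ is an isomorphism of $\fg[t]\de\fg[t]$-bimodules, and summing gives the claim.

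The main obstacle I expect is the exhaustion step: converting the one-sided generation $\U(\fg[t])\bC[G]^*$ into the two-sided expression $\U(\fn_-[t])\bC[G]^*\U(\fb[t])$. The cleanest route appears to be through the semidirect decomposition $G(\eO) = G(\eO)_{\rm id}\rtimes G$ and the identification $\bC[G(\eO)_{\rm id}] \simeq \U(t\fg[t])$ from Lemma \ref{unip}: one uses this to reduce moving $t$-positive upper-triangular generators from the left to the right of $\bC[G]^*$ to the transition between the left- and right-regular actions on the pro-unipotent group $G(\eO)_{\rm id}$, where both actions can be described explicitly in terms of the universal enveloping algebra. After that step is settled, the rest of the argument is essentially bookkeeping on top of Lemma \ref{surjection} and the known character identity.
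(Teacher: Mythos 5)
Your overall skeleton — Lemma \ref{surjection} gives surjections $T_\la\twoheadrightarrow\gr F_\la$, exhaustion of the filtration, then a character comparison forcing every surjection to be an isomorphism — is precisely the logic the paper uses. Your character comparison is also correctly identified (via Proposition \ref{func}, Lemma \ref{PW} and Lemma \ref{Tcharacter}), and you correctly observe that the $q$- and weight-grading makes the filtration locally finite, so $\ch_q\gr\,\bC[G(\eO)]^*=\ch_q\bC[G(\eO)]^*$ once exhaustion holds.

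The gap is in the exhaustion step, and it is a real one. You cite the lemma $\bC[G(\eO)]^*=\U(\fg[t])\bC[G]^*$, which is a purely one-sided (left) generation statement, and then propose to convert left $\U(\fb[t])$-action into right $\U(\fb[t])$-action ``modulo elements of $F_0$'' via the semidirect product $G(\eO)=G(\eO)_{\rm id}\rtimes G$ and the identification $\bC[G(\eO)_{\rm id}]\simeq\U(t\fg[t])$. That identification, however, is only as a $G\times\bC^*$-module (Lemma \ref{unip}, Remark \ref{SU}); it does not by itself describe how the left and right regular $\fg[t]$-actions interact, and the two actions of an upper-triangular element $xt^k$ on a vector in $\bC[G]^*$ are genuinely different first-order operators that do not agree modulo $F_0$. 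So ``moving $t$-positive upper-triangular generators from the left to the right'' is not a formal consequence of PBW plus the unipotent structure; as written it is a restatement of the problem rather than a proof, and I don't believe it can be completed without an additional geometric input.

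The paper avoids this entirely by proving the two-sided statement directly: the Proposition immediately preceding the lemma you cite shows $\bC[G(\eO)]^*=\sum_{\la}\U(t\fn_-[t])\,(V_\la\T V_\la^o)\,\U(t\fb_+[t])$, which is contained in $F_0$ by construction. Its proof is not an algebraic manipulation of left and right actions but a geometric argument: a function $\Psi$ of minimal positive $q$-degree vanishing on the right-hand side would be $\exp(t\fn_-[t])\times\exp(t\fb_+[t])$-invariant; since $\exp(t\fn_-[t])\,G\,\exp(t\fb_+[t])$ is open dense in $G(\eO)$ (this is the open Bruhat cell decomposition ${\rm Ad}\,g_0.\fn_-\oplus\fb_+=\fg$ lifted to currents), such a $\Psi$ factors through the evaluation at $t=0$, contradicting positive $q$-degree. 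Your proof should invoke that Proposition rather than try to bootstrap from the weaker one-sided lemma; as it stands the exhaustion step is not established.
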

\begin{proof}
Lemma \ref{surjection} provides the surjection from the left hand side to the right hand side.
Now the theorem is implied by the character identities from Proposition \ref{func} and Lemma \ref{PW}.
\end{proof}

\section{Howe duality for current groups}\label{TypeA}
%\subsection{Characters}
For a partition $\lambda=(\la_1\ge\dots\ge \la_n\ge 0$) let  $p_\la(x_1,\dots,x_n;q)$ be the corresponding
q-Whittaker function. In particular, $p_\la$ are polynomials in $x_i$ and $q$ and can be defined as the $t=0$ specialization of the
symmetric Macdonald polynomial $P_\la$. The q-Whittaker functions turned out to be very important in modern representation theory
(see e.g. \cite{E,C,BF1,BF2}). In particular, one has the following lemma.

\begin{lem}
The character of the $\mgl_n[t]$ local Weyl module $W_\la$ is given by $p_\la(x,q)$.
\end{lem}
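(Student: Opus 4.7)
The plan is to reduce the $\mgl_n[t]$ statement to the simple case of $\msl_n[t]$, and then invoke two classical theorems: the Chari--Loktev identification of type $A$ local Weyl modules with level-one affine Demazure modules, and the Sanderson--Ion identification of level-one affine Demazure characters with $q$-Whittaker polynomials.

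First, starting from \eqref{Wmgln}, I would quotient $\mathbb{W}_\lambda$ by $\fh\otimes t\bC[t]$ on both tensor factors (the $\msl_n[t]$-Cartan currents on the first and the center ${\rm Id}[t]_+$ on the second). This shows that the local Weyl module $W_\lambda$ is isomorphic, as an $\mgl_n[t]$-module, to $W_{\bar\lambda}\otimes \bC_{\lambda_n}$, where $\bC_{\lambda_n}$ is the one-dimensional ${\rm Id}[t]$-module on which ${\rm Id}\otimes 1$ acts by $n\lambda_n$ and ${\rm Id}\otimes t^k$ acts trivially for $k>0$. Consequently $\ch_q W_\lambda = (x_1\cdots x_n)^{\lambda_n}\ch_q W_{\bar\lambda}$, and since the $q$-Whittaker polynomials satisfy the analogous stability relation $p_\lambda(x;q) = (x_1\cdots x_n)^{\lambda_n} p_{\bar\lambda}(x;q)$ (this is the $t=0$ specialization of the well-known identity $P_{\lambda+(1^n)}(x;q,t) = e_n\,P_\lambda(x;q,t)$ for symmetric Macdonald polynomials), the claim reduces to showing $\ch_q W_{\bar\lambda} = p_{\bar\lambda}(x;q)$ for the $\msl_n[t]$-module.

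Second, for the $\msl_n[t]$ local Weyl module $W_{\bar\lambda}$ I would invoke the Chari--Loktev theorem, which identifies $W_{\bar\lambda}$ with a Demazure submodule $D(1,\bar\lambda)$ of the basic level-one integrable representation of $\widehat{\msl_n}$, after a suitable twist by loop rotation. This is the key nontrivial input and isolates the representation-theoretic part of the argument. Then I would apply the Sanderson--Ion theorem, which asserts that the character of the Demazure module $D(1,\bar\lambda)$ is precisely the symmetric Macdonald polynomial $P_{\bar\lambda}(x;q,t)$ specialized to $t=0$, i.e.\ the $q$-Whittaker polynomial $p_{\bar\lambda}(x;q)$. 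Combining these two identifications yields the desired equality.

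The main obstacle is conceptual rather than computational: the core content of the lemma is really contained in the Chari--Loktev and Sanderson--Ion results, both of which are specific to type $A$ and to level one. The bookkeeping around the $\mgl_n$ versus $\msl_n$ normalization (the $(x_1\cdots x_n)^{\lambda_n}$ shift) is routine but must be carried out consistently with the paper's convention~\eqref{Wmgln}. A fully self-contained proof would instead construct an explicit basis of $W_\lambda$ indexed by fillings of the Young diagram $Y_\lambda$ and match it with the Haglund--Haiman--Loehr formula for $p_\lambda$ at $t=0$; this is essentially the route the paper takes later in Section~\S\ref{sec::Kato::fillings}, but for the present lemma the cleanest plan is to cite the two classical identifications above.
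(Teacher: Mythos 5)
Your proposal is correct and matches the paper's approach: the paper's proof is simply the citation ``See \cite{I,S,GLO1},'' and your argument spells out exactly the standard chain those references encode --- the Chari--Loktev identification of $\msl_n[t]$ local Weyl modules with level-one affine Demazure modules, the Sanderson--Ion formula expressing Demazure characters as $t=0$ specializations of (nonsymmetric) Macdonald polynomials, and the routine $(x_1\cdots x_n)^{\lambda_n}$ shift handling the $\mgl_n$ versus $\msl_n$ normalization. Nothing is missing; you have simply made explicit the reasoning the paper compresses into a reference.
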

\begin{proof}
See \cite{I,S,GLO1}.
%Thanks to \cite{I,S} one knows that the characters of the local Weyl modules coincide with the $t=0$
%specialization of nonsymmetric Macdonald polynomials. However, the $t=0$ specializations of nonsymmetric and symmertic
%Macdonald polynomials do coincide the special(see ..............................).
\end{proof}

Assume that $m\ge n$. Then one has
the following  Cauchy type identity (we assume that $\la_{n+1}=0$):
\begin{equation}\label{CI}
\prod_{i=1}^n\prod_{j=1}^m(x_iy_j;q)_\infty^{-1}=
\sum_{\la_1\ge\dots\ge \la_n\ge 0} p_\la(x_1,\dots,x_n;q)p_\la(y_1,\dots,y_m;q) \prod_{i=1}^n (q)^{-1}_{\la_i-\la_{i+1}},
\end{equation}
where $(a;q)_\infty=\prod_{k\ge 0} (1-aq^k)$ and $(q)_k=\prod_{i=1}^k (1-q^k)$.
The identity \eqref{CI} is obtained by specializing at $t=0$ the corresponding identity for symmetric Macdonald polynomials
(see \cite{M}).
Now the left hand side of \eqref{CI} is equal to the character of the polynomials in variables $z_{i,j}^{(k)}$,
$i=1,\dots,m$, $j=1,\dots,n$ and
$k=0,1,\dots$.% (this is the space of functions on $GL_n[[t]]$, see  ......).

%\subsection{Modules}
We consider the left
action of the Lie algebra $\mgl_m[t]$  and the right action of the Lie algebra $\mgl_n[t]$ on the space of $m\times n$ matrices ${\rm Mat}_{m,n}(\bK[t])$.
Let $V$ be the left fundamental representation of the highest weight $\omega_1$ over $\mgl_m[t]$,
and let $U$ be the right fundamental representation of the highest weight $\omega_1$ over $\mgl_n[t]$.
We identify ${\rm Mat}_{m,n}(\bC[t])=V \otimes U \otimes \bC[t]$.

To a partition $\la$ with at most $r$ rows we attach a $\mgl_r$ weight $\sum \la_i\varepsilon_i$.
Then the simple roots are given by $\alpha_i=\varepsilon_i-\varepsilon_{i+1}$.
For two partitions $\la,\mu$ of size $N$ we write $\la\ge\mu$ if $\la-\mu$ is a sum of simple roots with nonnegative
integer coefficients.

Recall that $n \leq m$.
\begin{thm}\label{HoweCurrent}
The bimodule $S^N\left(V \otimes U \otimes \bC[t]\right)$ admits a decreasing filtration $F_{\lambda}$
indexed by partitions $\la$ of length $n$ with $\lambda \vdash N$ such
that
$$F_{\lambda}/F_{>\lambda}\simeq \bW_\la\T_{\A_\la} \bW^o_{\la}.$$
Moreover $S^N\left(V \otimes U \otimes \bC[t]\right)$ is generated by $S^N\left(V \otimes U \right)$
{\it as left $\gl_m[t]$-module}.
\end{thm}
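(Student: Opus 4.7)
The plan is to adapt the proof of Theorem~\ref{mainth} to the Howe duality setting. By Lemma~\ref{Tcharacter} applied in type $A$, the character of $T_\lambda := \bW_\lambda(V) \otimes_{\A_\lambda} \bW^o_\lambda(U)$ equals
$$
\ch_q T_\lambda \;=\; p_\lambda(x;q) \, p_\lambda(y;q) \prod_{i=1}^{n} (q)^{-1}_{\lambda_i - \lambda_{i+1}},
$$
using that the character of the local $\mgl[t]$-Weyl module is the $q$-Whittaker polynomial and that $\ch_q \A_\lambda = \prod_i (q)^{-1}_{\lambda_i - \lambda_{i+1}}$. The Cauchy identity~\eqref{CI}, restricted to partitions $\lambda \vdash N$, therefore already equates the total character of $\bigoplus_\lambda T_\lambda$ with the character of $S^N(V \otimes U \otimes \bC[t])$. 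The task is to upgrade this numerical identity to a filtration with the asserted subquotients.

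Starting from the classical Howe decomposition $S^N(V\otimes U) \simeq \bigoplus_{\mu \vdash N}V_\mu\otimes U_\mu^o$ (Theorem~\ref{Howeduality}), define the decreasing filtration indexed by partitions $\lambda \vdash N$ of length at most $n$ by
$$
F_\lambda \;=\; \U(\fn_-^{(m)}[t]) \cdot \Big(\bigoplus_{\mu \ge \lambda} V_\mu \otimes U_\mu^o\Big) \cdot \U(\fb_+^{(n)}[t]),
$$
where $\fn_-^{(m)}$ is the negative nilpotent of $\mgl_m$ (acting on the left) and $\fb_+^{(n)} = \fn_+^{(n)} \oplus \fh^{(n)}$ is the upper Borel of $\mgl_n$ (acting on the right). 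Set $\bar v_\lambda = v_\lambda \otimes v_\lambda^o \in V_\lambda \otimes U_\lambda^o$. Mimicking the proof of Lemma~\ref{surjection}, one checks that in the quotient $F_\lambda/F_{>\lambda}$ the vector $\bar v_\lambda$ is annihilated by $\fn_+^{(m)}[t]$ on the left and by $\fn_-^{(n)}[t]$ on the right, carries matching Cartan weights on both sides, satisfies the integrability relations $(f_\alpha^{(m)})^{\langle \lambda,\alpha^\vee\rangle+1}\bar v_\lambda = 0 = \bar v_\lambda (e_\alpha^{(n)})^{\langle \lambda,\alpha^\vee\rangle+1}$, and makes the two $\A_\lambda$-actions agree (since $\bar v_\lambda$ has a unique combined weight, both actions reduce to scalars determined by the same weight data). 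These are precisely the defining relations of $T_\lambda$ in the $\mgl$ setting, yielding a surjection $T_\lambda \twoheadrightarrow F_\lambda/F_{>\lambda}$.

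To conclude it remains to show that the filtration $F_\bullet$ exhausts $S^N(V\otimes U\otimes\bC[t])$, which is equivalent to the second assertion of the theorem: $S^N(V\otimes U\otimes\bC[t])$ is generated by its degree-zero subspace $S^N(V\otimes U)$ as a left $\mgl_m[t]$-module. Granting this, the total character matches $\sum_\lambda \ch_q T_\lambda$ via~\eqref{CI}, forcing each surjection $T_\lambda \twoheadrightarrow F_\lambda/F_{>\lambda}$ to be an isomorphism. This generation statement is the principal obstacle: the proof of Theorem~\ref{mainth} settled the analogous point via the open Bruhat cell in $G$, an argument unavailable here, so a direct computation is required. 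For $N=1$ the identity $(E_{ii}\otimes t^k)(e_i\otimes f_j) = e_i\otimes f_j\otimes t^k$ settles the claim; for general $N$ we proceed by induction on the total $t$-degree, using that applying $E_{ij}\otimes t^k$ to a product of pure tensors in $S^N(V\otimes U)$ produces, via the Leibniz rule, a linear combination of products in which a single factor has been moved to higher $t$-degree, with the freedom to choose off-diagonal matrix units isolating individual factors up to previously constructed lower-order terms. This PBW-type computation on the symmetric algebra is where the bulk of the technical work lies.
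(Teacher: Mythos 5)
Your overall strategy matches the paper's: use the classical Howe decomposition of $S^N(V\otimes U)$ to seed a filtration, check the defining relations of $T_\lambda$ to get a surjection $T_\lambda\twoheadrightarrow F_\lambda/F_{>\lambda}$, and let the Cauchy identity~\eqref{CI} force this surjection to be an isomorphism once the generation statement is known. You also correctly observe that the open Bruhat-cell argument from Theorem~\ref{mainth} does not transfer and that a direct computation is needed, and you correctly identify the generation statement as the crux of the proof.

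However, you do not actually prove that $S^N(V\otimes U\otimes\bC[t])$ is generated by $S^N(V\otimes U)$, and your sketch of the "PBW-type computation" glosses over precisely the point where the difficulty lies. When $E_{ij}\otimes t^k$ acts on a degree-$N$ product of pure tensors, the Leibniz rule produces a \emph{sum} of monomials, one for each factor it touches; there is no action that lets you hit a single chosen factor. In particular, off-diagonal matrix units do not "isolate individual factors." The genuinely delicate case is the \emph{diagonal} one: producing $\prod_i\prod_k v_iu_it^{s^k_{ii}}$ from $\prod_i(v_iu_i)^{r_{ii}}$ using only the Cartan elements $h_{ii}\otimes t^a$, where each application smears the exponent across all factors with the same index pair. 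The paper handles this (Lemma~\ref{zerolevelgeneratedbimodule}, Step 1) by a careful induction on the number of nonzero exponents: acting by $h_{jj}t^a$ on a monomial with $b$ nonzero exponents in slot $j$ yields a linear combination of known monomials with $b$ nonzero exponents plus a single new monomial with $b+1$, which isolates it modulo lower-order terms. Steps 2 and 3 then propagate this via $\fn_-[t]$ on the left and $\fn_+[t]$ on the right, relying on the explicit $\msl_2$ input (Lemma~\ref{sl2Weyl}) that $S^N(B[t])$ is a global Weyl module generated from its top slice. Your proposal neither names this diagonal subtlety nor gives the inductive device that resolves it, so as written the argument for generation does not go through; the rest of your proof is contingent on it.
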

\begin{proof}
We first note that the $q$-character of $S\left(V \otimes U \otimes \bC[t]\right)$ is given by the left hand side of
\eqref{CI} and the right hand side of \eqref{CI} is equal to the sum over all $\lambda$ of the characters of the spaces
$\bW_\la\T_{\A_\la} \bW^o_{\la^*}.$ In order to define $F_\lambda$ let us consider the classical Howe decomposition
\[
S^N(V\T U)=\bigoplus_{{\la\vdash N}} V_\la\T U_\la^o.
\]
Let $\bar v_\la=v_\la\T u_\la^o$ be the tensor product of weight $\la$ vectors. We define
\[
F_\la=\sum_{\mu\ge \lambda}\U(\mgl_m[t])\bar v_{\mu}\U(\mgl_n[t]).
\]
Then the first claim of Theorem follows from Lemma \ref{zerolevelgeneratedbimodule} below
(since $\fn_-[t]\bar v_{\lambda}\subset F_{>\lambda}$, $\bar v_{\lambda}\fn_+[t]\subset F_{>\lambda}$).
The second claim is proven in Lemma  \ref{zerolevelgeneratedleftmodule} below.
\end{proof}

We need the following simple Lemma.
Let $B=\mathrm{span}\{a_1, a_2\}$ be a two-dimensional $\msl_2$-module.
\begin{lem}\label{sl2Weyl}
The $\fn_-^{\msl_2}[t]$-module $S^N(B[t])$ is generated by $\mathrm{span}\left(\prod_{k=1}^N a_1 t^{r_k}, r_k\ge 0\right)$.
\end{lem}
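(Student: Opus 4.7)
The strategy is to translate the problem into polynomial algebra and run a double induction. Write $z_r:=a_1\otimes t^r$ and $w_r:=a_2\otimes t^r$, so that $S^N(B[t])$ is identified with the degree-$N$ piece $R_N$ of the polynomial algebra $\bC[z_0,z_1,\ldots,w_0,w_1,\ldots]$, and the generator $f_k:=f\otimes t^k$ of $\fn_-^{\msl_2}[t]$ acts as the first-order derivation
\[
f_k\;=\;\sum_{r\ge 0}w_{r+k}\,\pa_{z_r}.
\]
Let $M\subset R_N$ be the $\U(\fn_-^{\msl_2}[t])$-submodule generated by the pure-$z$ monomials; the lemma is the assertion $M=R_N$. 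A monomial basis of $R_N$ is provided by
\[
m_{P;Q}\;:=\;\prod_{p\in P}z_p\cdot\prod_{q\in Q}w_q,
\]
indexed by pairs of multisets $(P,Q)$ of non-negative integers with $|P|+|Q|=N$. Set $s:=|Q|$ and $\sigma(P):=\sum_{p\in P}p$. I will prove $m_{P;Q}\in M$ by induction on the pair $(s,\sigma(P))$ in lexicographic order.

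The outer base $s=0$ is immediate. For the inductive step, introduce the auxiliary monomial
\[
\widetilde m\;:=\;z_0\cdot m_{P;Q\setminus\{q_s\}}\;=\;z_0\prod_{p\in P}z_p\prod_{l<s}w_{q_l},
\]
which carries only $s-1$ factors of $w$ and therefore lies in $M$ by the outer hypothesis; hence $f_{q_s}\cdot\widetilde m\in M$. A direct computation using the derivation rule, separating the $r=0$ term from the $r>0$ terms, produces
\[
f_{q_s}\cdot\widetilde m\;=\;(c_0+1)\,m_{P;Q}\;+\;\sum_{r>0}\alpha_r\,m_{P_r;Q_r},
\]
where $c_0$ is the multiplicity of $0$ in $P$, $\alpha_r$ the multiplicity of $r$ in $P$, $P_r=(P\setminus\{r\})\cup\{0\}$ and $Q_r=(Q\setminus\{q_s\})\cup\{r+q_s\}$. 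Each summand satisfies $|Q_r|=s$ but $\sigma(P_r)=\sigma(P)-r<\sigma(P)$, so the inner induction gives $m_{P_r;Q_r}\in M$. Therefore $(c_0+1)\,m_{P;Q}\in M$, and since $c_0+1\ge 1$ we conclude $m_{P;Q}\in M$. The inner base $\sigma(P)=0$ is subsumed: in that case $P$ consists only of zeros, the sum over $r>0$ is empty, and the identity degenerates to $f_{q_s}\cdot\widetilde m=(N-s+1)m_{P;Q}$, with $N-s+1\ne 0$ since $s\le N$.

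The only real obstacle is that the naive move---picking an ancestor of the form $m_{P\cup\{a\};Q\setminus\{q_s\}}$ for an arbitrary $a$ and applying a single $f_{q_s-a}$---creates spurious monomials at the \emph{same} $w$-level $s$, so one cannot directly reduce $s$. The key trick is that choosing the replacement variable to be $z_0$ in particular forces those spurious monomials to have strictly smaller $\sigma(P)$, which is precisely what makes the secondary induction on $\sigma(P)$ run and close the argument.
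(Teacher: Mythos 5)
Your proof is correct, but it takes a genuinely different route from the paper's. The paper invokes the known identification $S^N(B[t])\simeq\bW_{N\omega}$ for $\msl_2$: once one knows this, PBW gives $\bW_{N\omega}=\U(\fn_-[t])\,\U(\fh[t])\,a_1^N$ (since $\fn_+[t]$ kills the cyclic vector), and a short computation shows $\U(\fh[t])a_1^N$ is exactly the span of the $\prod a_1 t^{r_k}$, so the whole lemma follows in two lines from the structure theory of Weyl modules. Your argument, by contrast, is entirely elementary and self-contained: it encodes $S^N(B[t])$ in the polynomial ring $\bC[z_\bullet,w_\bullet]$, realizes each $f_k$ as the derivation $\sum_r w_{r+k}\pa_{z_r}$, and runs a double induction on $(|Q|,\sigma(P))$, with the key device being to pad with $z_0$ before applying $f_{q_s}$ so that the error terms at the same $w$-level strictly drop $\sigma(P)$. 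The paper's approach is shorter and exposes the representation-theoretic meaning (the statement is just cyclicity of a global Weyl module read through PBW), while yours avoids any reliance on the isomorphism $S^N(\bC^2[t])\cong\bW_{N\omega}$ and would transparently generalize to settings where such an identification is not available. Both are complete; I would only add one sentence making explicit that the lexicographic order on $(s,\sigma)\in\bZ_{\ge 0}^2$ is a well-order, so the double induction is legitimate.
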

\begin{proof}
We note that $\fn_-^{\msl_2}$ is spanned by a single element $f$ and $S^N(B[t])$ is the global Weyl module $\mathbb{W}_{N \omega}$.
The module $\bW_{N\omega}$ is generated
as $\fb_-^{\msl_2}[t]$-module by its highest weight element $a_1^N$.
Using the PBW theorem we obtain $\U(\fh[t])a_1^N=\mathrm{span}\left(\prod_{k=1}^N a_1 t^{r_k}, r_k\ge 0\right)$ and
\[
\U(\fn_-^{\msl_2}[t])\mathrm{span}\left(\prod_{k=1}^N a_1 t^{r_k}, r_k\ge 0\right)=S^N(B[t]).
\]
\end{proof}

Consider the $\gl_m\de \gl_n$ subbimodule $S^N(V \T U)$ inside $S^N(V \T U\T\bC[t])$.
In what follows we use the notation $\fb_+$, $\fb_-$ to denote the Borel
subalgebras in $\mgl_m$ or $\mgl_n$ (this does not lead to  confusion since $\mgl_m$ acts from the left and $\mgl_n$ acts
from the right).
\begin{lem}\label{zerolevelgeneratedbimodule}
$S^N(V \T U\T \bK[t])$ is generated by $S^N(V \T U)$ as $\fb_-[t]\de \fb_+[t]$ bimodule.
\end{lem}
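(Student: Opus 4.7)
The plan is to work in the polynomial realization: identify $S(V \T U \T \bC[t])$ with $\bC[z_{ij}^{(k)}]$ where $z_{ij}^{(k)} := v_i \T u_j \T t^k$, so that $S^N(V \T U)$ is the subspace of polynomials of homogeneous degree $N$ in the zero-modes $\{z_{ij}^{(0)}\}$. Under this identification, $E_{ab} \T t^l \in \fb_-^{\mgl_m}[t]$ (with $a \geq b$) acts from the left as the derivation $z_{ij}^{(k)} \mapsto \delta_{bi}\, z_{aj}^{(k+l)}$, and $E_{cd} \T t^l \in \fb_+^{\mgl_n}[t]$ (with $c \leq d$) acts from the right as $z_{ij}^{(k)} \mapsto \delta_{cj}\, z_{id}^{(k+l)}$. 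In particular, the diagonal Cartans $E_{ii}\T t^l$ (left) and $E_{jj}\T t^l$ (right) shift $t$-powers along a fixed row or column of the index grid, while the lower-triangular left and upper-triangular right root vectors are enough to raise the first or second index from $1$ to any desired value.

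The strategy is to show that from the anchor monomial $(z_{11}^{(0)})^N \in S^N(V \T U)$ one can reach any basis monomial $M = z_{i_1 j_1}^{(k_1)} \cdots z_{i_N j_N}^{(k_N)}$. I would modify the $N$ factors one at a time: for the $l$-th factor, apply $E_{i_l, 1} \T t^{k_l}$ from the left to convert one copy of $z_{11}^{(0)}$ into $z_{i_l, 1}^{(k_l)}$, and then $E_{1, j_l} \T 1$ from the right to adjust the second index from $1$ to $j_l$. Iterating $N$ times yields $M$ (up to a nonzero scalar) together with a collection of Leibniz-correction terms which must then be cancelled.

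The main obstacle is precisely this Leibniz issue: each of the operators above is a derivation, so a single application hits every factor whose source index matches, producing symmetrized sums instead of a single targeted modification. The key tool to handle it is Lemma~\ref{sl2Weyl}: within the subspace of polynomials supported in a single cell $(i,j)$, the $\fh[t]$-action generates the full polynomial ring $\bC[z_{ij}^{(k)}]_{k \geq 0}$ of the given degree out of $(z_{ij}^{(0)})^r$, precisely because power-sum derivations span the ring of symmetric polynomials. For factors spread across different cells one exploits that the left $E_{ii}\T t^a$ selects by first index $i$ while the right $E_{jj}\T t^b$ selects by second index $j$, and combines them via inclusion-exclusion. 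I would organize the argument as a double induction: an outer induction on $q$-degree $q = \sum_l k_l$ (with $q = 0$ trivially in $S^N(V \T U)$) and an inner induction on the number of non-zero $t$-powers, verifying that every Leibniz-correction monomial either has strictly smaller $q$-degree, or the same $q$-degree but strictly fewer factors with non-zero $t$-power, and hence lies in the generated subspace by the inductive hypothesis.
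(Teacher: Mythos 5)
There is a real gap, and it appears in two places.

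First, the proposed anchor is too small. If you take literally the claim that one can reach everything from the single monomial $(z_{11}^{(0)})^N$, this already fails at $q$-degree zero: the element $(z_{11}^{(0)})^N$ is a highest weight vector for both the left $\mgl_m$-action and the right $\mgl_n$-action, so $\U(\fb_-[t])\,(z_{11}^{(0)})^N\,\U(\fb_+[t])$ has degree-zero part equal only to $V_{(N)}\T U^o_{(N)}$, whereas $S^N(V\T U)=\bigoplus_{\la\vdash N}V_\la\T U^o_\la$ by Howe duality. The operators in $\fb_-[t]\oplus\fb_+[t]$ never decrease $q$-degree, so the other summands $V_\la\T U^o_\la$ are unreachable. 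One must allow all of $S^N(V\T U)$ as the seed (the paper in fact takes only the diagonal monomials $D=\{\prod_i(v_iu_i)^{r_{ii}}\}$, which already suffices), but your ``one factor at a time starting from $(z_{11}^{(0)})^N$'' picture is not consistent with this.

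Second and more seriously, the double induction does not close up. Your inductive claim is that every Leibniz-correction term has either smaller $q$-degree or fewer nonzero $t$-exponents. This is false: to produce a target $M$ containing a factor $z_{i_1 j_1}^{(k_1)}$ with $k_1>0$ you want to apply $E_{i_1,1}\T t^{k_1}$ (or similar) to a monomial $M'$ in which that factor is replaced by $z_{1,j_1}^{(0)}$, but the derivation also hits every other factor of $M'$ with first index $1$ and zero $t$-exponent, producing monomials that have the \emph{same} $q$-degree and the \emph{same} number of nonzero $t$-exponents as $M$, merely with the large $t$-exponent attached to a different cell. Those corrections are not covered by your inductive hypothesis, and nothing in the proposal says how to dispose of them. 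The paper sidesteps exactly this issue by a carefully ordered three-step procedure: Step 1 applies only the diagonal currents $h_{jj}t^a$ to the diagonal seed $D$, so the derivation cannot hit cells that have already been ``finished''; Step 2 then fills the lower triangle one cell $(i_0,j_0)$ at a time, introducing an auxiliary filtration $G_p$ so that the relevant $\langle e_{i_0-1,i_0}\rangle[t]$-action on the associated graded reduces to the $\msl_2$-Weyl module statement of Lemma~\ref{sl2Weyl}; Step 3 then applies the right $\fn_+[t]$ action to reach all cells. Note also that Lemma~\ref{sl2Weyl} as stated is about the $\fn_-^{\msl_2}[t]$-action on $S^N(B[t])$, not the $\fh[t]$-action that you invoke; the ``power sums generate symmetric functions'' fact you cite is what drives Step 1 of the paper's proof, but it is not Lemma~\ref{sl2Weyl}. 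To repair your argument you would need to reintroduce an ordering of cells analogous to the paper's Steps 1--3, at which point you essentially recover the paper's proof.
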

\begin{proof}
Let $\{v_1, \dots, v_m\}$ be the weight basis of the left module $V$ such that $e_{ij}v_j=v_i$,
$\{u_1, \dots, u_n\}$ be the weight basis of the right module $U$ such that $u_ie_{ij}=u_j$.
We prove that the set $\{\prod_{i=1}^n(v_iu_i)^{r_{ii}}, r_{ii}\ge 0, \sum_i r_{ii}=N\}$ generates this bimodule.
Denote the linear span of these elements by $D$.

We divide the proof into three steps.

{\it Step 1.} Consider first the left action of elements $h_{ii}\otimes t^k$.
We have:
\[
\U(\fh[t])D=\mathrm{span} \left(\prod_{i=1}^n\prod_{k=1}^{r_{ii}}v_iu_it^{s_{ii}^{k}},\ s_{ii}^{k}\ge 0\right).\]
The proof of this fact is by induction on the number of nonzero exponents $s_{ii}^{k}$. Indeed assume by induction that $\U(\fh[t])D$
contains all elements of the form $x=\prod_{i=1}^n\prod_{k=1}^{r_{ii}}(v_iu_it^{s_{ii}^{k}})$ with the number of nonzero exponents $s_{jj}^{k}$
equal to $b<r_{jj}$. Then applying elements $h_{jj}t^a$ to $x$ we obtain the sum of elements such that all of them but one have
the number of nonzero exponents $s_{jj}^{k}$ equal to $b$. The remaining summand is equal to the integer multiple of
\[
\prod_{\substack{1\le i\le n\\i \neq j}}\prod_{k=1}^{r_{ii}}v_iu_it^{s_{ii}^{k}}\cdot
\prod_{k=1}^{r_{jj-1}}(v_ju_jt^{s_{jj}^{k}})\cdot (v_ju_jt^{{a}}).
\]
This completes the induction step.

{\it Step 2.} Next we prove the following claim:
\[\U(\fn_-[t])\U(\fh[t])D=
\mathrm{span}\left(\prod_{i\geq j}\prod_{k=1}^{r_{ij}}(v_iu_jt^{s_{ij}^{k}}) , s_{ij}^{k} \in \mathbb{Z}_{\geq 0}\right).\]

Assume by induction that all the elements $\prod_{i\geq j}\prod_{k=1}^{r_{ij}}(v_iu_jt^{s_{ij}^{k}})$ belong to
$\U(\fn_-[t])\U(\fh[t])D$
if $r_{ij}=0$ for $i \neq j$, $j<j_0$ or $j=j_0, i> i_0-1$ or $i<j$. We denote the linear span of these elements by $D_{i_0-1, j_0}$.

We prove that $D_{i_0, j_0}\subset\U(\fn_-[t])\U(\fh[t])D$
%if $r_{ij}=0$ for $j<j_0$ or $j=j_0, i> i_0$.

Consider the following filtration on the space $D_{i_0, j_0}$:
\[
G_p=\mathrm{span}\left(\prod_{i\geq j}\prod_{k=1}^{r_{ij}}(v_iu_jt^{s_{ij}^{k}}), r_{ij}=0 ~\text{for}~ j<j_0~ \text{or}~ j=j_0, i> i_0,
r_{i_0j_0}\leq p\right).
\]
Then this filtration preserves the structure of $\langle e_{i_0-1, i_0} \rangle[t]$-module.
Then the corresponding graded module is isomorphic to the direct sum of copies of modules $\mathbb{W}_{(r_{i_0j_0}+r_{i_0-1j_0})\omega}$.
Using Lemma \ref{sl2Weyl} we obtain that
the corresponding graded module is generated by $D_{i_0-1, j_0}$. This completes the induction step and Step 2 of the proof.

{\it Step 3.} Now we obtain the following:
\[
\U(\fn_-[t])U(\fh[t])D U(\fn_+[t])=S^N(V \T U\T \bC[t]).
\]
The proof is by analogous induction using the sets $D_{i_0j_0}$, $j_0>i_0$ which are the linear spans of the elements
$\prod_{i\geq j}\prod_{k=1}^{r_{ij}}(v_iu_jt^{s_{ij}^{k}})$, $r_{ij =0}$ for $i<i_0$, $j>j_0$.
Indeed we have:
\[D_{i_0,j_0}=D_{i_0j_0-1}U(\langle f_{i_0j_0} \rangle[t])\]
and by induction $D_{i_0,j_0} \subset \U(\fn_-[t])U(\fh[t])D U(\fn_+[t])$.
\end{proof}

\begin{lem}\label{zerolevelgeneratedleftmodule}
$S^N(V \T U\T \bC[t])$ is generated by $S^N(V \T U)$ as left $\gl_m[t]$ module.
\end{lem}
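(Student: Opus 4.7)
My plan is to bootstrap off Lemma~\ref{zerolevelgeneratedbimodule}: since that lemma has already produced $S^N(V\otimes U\otimes\bC[t])$ as a $\fb_-[t]\de\fb_+[t]$-bimodule generated by $S^N(V\otimes U)$, it suffices to show that every monomial produced there by a right-$\fb_+[t]$-action can also be reached by a left-$\gl_m[t]$-action, possibly starting from a different element of $S^N(V\otimes U)$. The strategy will reduce the statement to a generation question for tensor products of global Weyl modules for $\gl_m[t]$, which is then handled using the hypothesis $m\geq n$.

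First I would observe that the left $\gl_m[t]$-action on $S^N(V\otimes U\otimes\bC[t])$ preserves the multiset of $U$-indices of a monomial, so we have the direct-sum decomposition into left $\gl_m[t]$-submodules
\[
S^N(V\otimes U\otimes\bC[t]) \;=\; \bigoplus_{m_1+\cdots+m_n=N}\,\bigotimes_{j=1}^n S^{m_j}(V\otimes\bC[t]),
\]
and the intersection of a summand with $S^N(V\otimes U)$ is exactly $\bigotimes_j S^{m_j}(V)$. Each factor $S^{m_j}(V\otimes\bC[t])$ is isomorphic as a $\gl_m[t]$-module to the type $A$ global Weyl module $\bW_{m_j\omega_1}$ (as can be checked by matching the defining relations for the cyclic vector $v_1^{m_j}$ together with the character formula listed in \S\ref{WeylModules}), and is in particular cyclic with cyclic vector $v_1^{m_j}\in S^{m_j}(V)$. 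The claim therefore reduces to showing that $\bigotimes_j \bW_{m_j\omega_1}$ is generated as left $\gl_m[t]$-module (with diagonal coproduct action) by $\bigotimes_j S^{m_j}(V)$.

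Next, I would use the hypothesis $m\geq n$ to fix an injective map $\pi\colon\{1,\ldots,n\}\hookrightarrow\{1,\ldots,m\}$ and start from the weight vector $w_\pi=\bigotimes_j v_{\pi(j)}^{m_j}\in \bigotimes_j S^{m_j}(V)$. The key clean-lift property is that the operator $e_{i,\pi(j)}\otimes t^k$ annihilates $v_{\pi(j')}^{m_{j'}}$ whenever $j'\neq j$ (because $\pi(j')\neq \pi(j)$ by injectivity of $\pi$), and therefore acts as a pure lift on the single tensor factor labelled by $j$, exactly reproducing in that factor the situation of Lemma~\ref{sl2Weyl}. Iteratively applying such operators, and freely permuting weight vectors inside the $\gl_m$-submodule $\bigotimes_j S^{m_j}(V)$, one constructs any desired element of factor $j$ while keeping the other factors in their initial shape $v_{\pi(j')}^{m_{j'}}$.

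The \textbf{main obstacle} I anticipate is the interaction that arises once a tensor factor $j$ has been modified and contains a vector $v_{\pi(j')}\otimes t^k$ for some $j'\neq j$: from that point on, operators $e_{\cdot,\pi(j')}\otimes t^{?}$ intended to modify factor $j'$ will also act on the newly-introduced $v_{\pi(j')}$ inside factor $j$, producing unwanted cross terms. I plan to handle this by a secondary induction on a refined invariant — the lexicographic pair consisting of (i) the total $t$-degree and (ii) the number of tensor factors not yet in the desired shape — combined with the freedom to vary the starting element inside $\bigotimes_j S^{m_j}(V)$: each cross term has strictly smaller invariant than the current target monomial and, by the inductive hypothesis, is already in $\U(\gl_m[t])\cdot S^N(V\otimes U)$, so it can be subtracted. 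The overall combinatorial structure is parallel to the three-step argument in the proof of Lemma~\ref{zerolevelgeneratedbimodule}, with every right $\fn_+^{\gl_n}[t]$-move replaced by a left ``column-lifting'' move $e_{i,\pi(j)}\otimes t^k$ whose cleanness is exactly guaranteed by the assumption $m\geq n$.
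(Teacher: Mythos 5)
Your approach is genuinely different from the paper's. You decompose $S^N(V\otimes U\otimes\bC[t])$ by $U$-multidegree into $\bigoplus_{m_1+\cdots+m_n=N}\bigotimes_j S^{m_j}(V\otimes\bC[t])$, recognize each factor as a global Weyl module $\bW_{m_j\omega_1}$, and then try to generate the tensor product directly by ``column-lifting'' operators $e_{i,\pi(j)}\otimes t^k$ chosen via an injection $\pi$. The paper instead just observes that Steps~1 and~2 of the proof of Lemma~\ref{zerolevelgeneratedbimodule} already use only the \emph{left} actions of $\fh^{\gl_m}[t]$ and $\fn_-^{\gl_m}[t]$, so they produce the span of all lower-triangular monomials $\prod_{i\ge j} v_iu_jt^{s}$ inside $\U(\gl_m[t])\cdot S^N(V\otimes U)$; and then it replaces Step~3's right $\fn_+^{\gl_n}[t]$-action with the left $\fn_+^{\gl_m}[t]$-action, running the same double induction on the matrix position $(i_0,j_0)$.

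The gap in your write-up is exactly the step you flag as the ``main obstacle.'' Your proposed lexicographic invariant (total $t$-degree, number of factors not yet in shape) does not obviously decrease on cross terms: when you apply $e_{i,\pi(j)}\otimes t^k$ to an intermediate monomial $M'$ to produce the target $M$, the operator hits \emph{every} occurrence of $v_{\pi(j)}$ in $M'$, and the resulting cross terms have the same total $t$-degree as $M$; whether the second coordinate of your invariant drops depends on where those extra occurrences of $v_{\pi(j)}$ sit and is not established. Concretely, once a factor $j''\neq j$ already contains $v_{\pi(j)}t^{s''}$, the cross term replaces it by $v_it^{s''+k}$, which can move $j''$ from ``matched'' to ``unmatched.'' This is precisely the interaction the paper's three-stage ordering is designed to avoid: Step~1 (diagonal $\fh[t]$) only adds $t$-degrees to diagonal entries $v_iu_i$, Step~2 ($\fn_-[t]$) reaches the lower-triangular pattern one $\msl_2$-string at a time via Lemma~\ref{sl2Weyl}, and Step~3 ($\fn_+[t]$) fills in upper-triangular entries $(i_0,j_0)$ with $i_0<j_0$ in an order chosen so that the operator $e_{i_0j_0}t^k$ only ever creates entries already in the inductive set. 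To salvage your proof you would need a genuinely well-founded invariant (or, more simply, replace the free-form column-lifting by the paper's ordered three-stage scheme, which you already cite via Lemma~\ref{sl2Weyl}). The Weyl-module decomposition you found is a nice structural reformulation, but it does not by itself dissolve the cross-term bookkeeping.
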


\begin{proof}
The proof is completely analogous. By Step 2 of the previous proof we have:
\[
\U(\fn_-[t])\U(\fh[t])D=
\mathrm{span}\left(\prod_{i\geq j}\prod_{k=1}^{r_{ij}}v_iu_jt^{s_{ij}^{k}}, s_{ik}^{k} \in \mathbb{Z}_{\geq 0}\right).
\]
Then the claim of the lemma is obtained by the same induction using the left action of $\fn_+[t]$.
\end{proof}

\section{Schur-Weyl duality}\label{SW}

\subsection{Classical Schur-Weyl duality}
Let us recall several different equivalent reformulations of the classical Schur-Weyl duality discovered by Issai Schur (\cite{Schur}) and popularized by Herman Weyl in \cite{HW_invariants}.
\begin{thm*}
There exists an isomorphism of $\GL_V\times \mathfrak{S}_n$-modules
\[V^{\otimes n} \simeq \oplus_{\substack{\lambda\vdash n\\l(\lambda)\le\dim V}} V_\lambda \otimes \mathbb{S}_\lambda.  \]
Here $V_\lambda$ denotes the irreducible $\GL_V$-module with highest weight $\lambda$ and $\mathbb{S}_{\lambda}$ is the irreducible $\mathfrak{S}_n$-module that corresponds to the partition $\lambda$ also known as the Specht module.
\end{thm*}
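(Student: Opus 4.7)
The plan is to deduce the classical Schur-Weyl duality from the Howe duality (Theorem~\ref{Howeduality}) recalled earlier, in the spirit of the derivation attributed to Howe in the introduction. Set $m=\dim V$ and let $U$ be an $n$-dimensional vector space with a fixed basis $u_1,\dots,u_n$, so that the symmetric group $\mathfrak{S}_n$ sits inside $\GL_U$ as the subgroup permuting this basis.

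First I would identify $V^{\otimes n}$ with a distinguished weight subspace of $S^n(V\otimes U^o)$. Writing a general element of $V\otimes U^o$ as $\sum_{i=1}^n v_i\otimes u_i^o$, the symmetric algebra $S(V\otimes U^o)$ inherits a $\bZ_{\ge 0}^n$-multigrading that coincides with the weight decomposition under the right action of the diagonal Cartan $\fh_U\subset\mgl_U$. The multilinear part of $S^n(V\otimes U^o)$, of multidegree $(1,\dots,1)$, is spanned by products $(v_1\otimes u_1^o)\cdots(v_n\otimes u_n^o)$, and the assignment
\[
(v_1\otimes u_1^o)(v_2\otimes u_2^o)\cdots(v_n\otimes u_n^o)\longmapsto v_1\otimes v_2\otimes\cdots\otimes v_n
\]
is a $\GL_V$-equivariant isomorphism from this subspace onto $V^{\otimes n}$. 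The residual right action of $\mathfrak{S}_n\subset\GL_U$ permuting the $u_i$ intertwines with the permutation action on the tensor factors of $V^{\otimes n}$.

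Taking the $(1,\dots,1)$-weight subspace on both sides of Howe's decomposition
\[
S^n(V\otimes U^o)\simeq \bigoplus_{\substack{\lambda\vdash n\\ \ell(\lambda)\le\min(m,n)}} V_\lambda\otimes U^o_\lambda
\]
then yields
\[
V^{\otimes n}\simeq \bigoplus_{\substack{\lambda\vdash n\\ \ell(\lambda)\le m}} V_\lambda\otimes (U^o_\lambda)_{(1,\dots,1)},
\]
where the constraint $\ell(\lambda)\le n$ is automatic for $\lambda\vdash n$. It remains to identify the multilinear weight space $(U^o_\lambda)_{(1,\dots,1)}$, together with its residual right $\mathfrak{S}_n$-action, with the Specht module $\mathbb{S}_\lambda$.

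The main obstacle is precisely this final identification: the $(1,\dots,1)$-weight space of the irreducible $\GL_n$-representation of highest weight $\lambda$ (where $|\lambda|=n$), viewed as a representation of the Weyl group $\mathfrak{S}_n$, is isomorphic to the Specht module $\mathbb{S}_\lambda$. This is a classical fact---its dimension equals the number of standard Young tableaux of shape $\lambda$, and the $\mathfrak{S}_n$-module structure is pinned down by a character computation via the Frobenius characteristic map. Combining this identification with the previous steps delivers the advertised $\GL_V\times\mathfrak{S}_n$-equivariant decomposition.
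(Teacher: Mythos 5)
The paper records this as a classical theorem without proof (citing Schur \cite{Schur} and Weyl \cite{HW_invariants}), so there is no in-paper proof to compare against. Your derivation via Howe duality is precisely the route the introduction attributes to Howe and that Example~\ref{ex::Howe::SW} gestures at, with the roles of $V$ and $U$ interchanged (the paper projects to a $\GL_V$-weight space of $S^n(V\otimes U)$ to recover $U^{\otimes n}$; you project to a $\GL_U$-weight space to recover $V^{\otimes n}$ --- symmetric variants of one another). Your identification of the multilinear component of $S^n(V\otimes U^o)$ with $V^{\otimes n}$, and the compatibility of the residual $\mathfrak{S}_n\subset\GL_U$ action with the permutation of tensor factors, are correct and cleanly stated.

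The one place that needs genuine care is exactly the step you flag: identifying $(U^o_\lambda)_{(1,\ldots,1)}$ with the Specht module $\mathbb{S}_\lambda$. This \emph{is} a classical fact, but be careful about circularity: the most commonly quoted proof of it is itself deduced from Schur--Weyl duality (take the $(1,\ldots,1)$-weight space on both sides of $U^{\otimes n}\simeq\bigoplus_\lambda U_\lambda\otimes\mathbb{S}_\lambda$ and match against the regular $\mathfrak{S}_n\times\mathfrak{S}_n$-module). To use this fact as an input to a proof of Schur--Weyl you must fix a definition of $\mathbb{S}_\lambda$ (say via polytabloids) and give an independent identification --- for instance, exhibit a standard-Young-tableau basis of $(U_\lambda)_{(1,\ldots,1)}$ (Gelfand--Tsetlin or crystal basis) and match the $\mathfrak{S}_n$-action with that on polytabloids, or compute the $\mathfrak{S}_n$-character of the weight space directly and compare with the Murnaghan--Nakayama rule. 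Your phrase ``a character computation via the Frobenius characteristic map'' points in the right direction, but the Frobenius characteristic is most often set up \emph{using} Schur--Weyl, so as written this last step would need to be fleshed out to make clear it is non-circular.
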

In particular, the Schur-Weyl duality defines a pair of functors that defines an equivalence between the category $\RepGL{V}{n}$ of polynomial $\gl_V$-representations of degree $n\leq m=\dim V$ and $\mathfrak{S}_n$-representations:
\begin{equation}
\label{eq::Schur-Weyl}
\begin{tikzcd}
\mathsf{Rep}(\mathfrak{S}_n) \arrow[rrr,shift left=1ex,"{M\mapsto M\otimes_{\mathfrak{S}_n} (V)^{\otimes n}}"]
 & & & \RepGL{m}{n} \arrow[lll," U_{(1^n,0^{m-n})}  \mapsfrom U "]
\end{tikzcd}.
\end{equation}
Here by $U_{(1^n,0^{m-n})}$ we denote the subspace of weight ${(\underbrace{1,\ldots,1}_n,\underbrace{0,\ldots,0}_{m-n})}$ in the $\GL_m$-module $U$. In the case $m>n$ the corresponding $\msl_m$ weight is equal to the $n$-th fundamental weight $\omega_{n}$.
In particular, the $\mathfrak{S}_n$-irreducible Specht module $\mathbb{S}_{\lambda}$ and irreducible $\GL_V$-module $V_\lambda$ associated with a Young diagram $\lambda$ are Schur-Weyl dual.

The following important Example~\ref{ex::Howe::SW} of Schur-Weyl dual modules was used by R.\,Howe in order to prove the equivalence of the Howe  and the Schur-Weyl dualities:
\begin{example}
\label{ex::Howe::SW}
Consider an auxiliary vector space $U$ and the $\GL_{U}$-equivariant version of Schur-Weyl duality~\eqref{eq::Schur-Weyl} between the category of $\GL_{U}\times\mathfrak{S}_n$ modules and $\GL_{U}\times\GL_V$ modules. The following modules becomes Schur-Weyl dual to each other:
\begin{equation}
\label{eq::Howe::SW}
\begin{tikzcd}
U^{\otimes n} = S^{n}(V\otimes U)_{(1^n,0^{m-n})}
 \arrow[r]
 & S^{n}(V\otimes U)  \simeq U^{\otimes n} \otimes_{\bC[\mathfrak{S}_n]} V^{\otimes n} \arrow[l].
\end{tikzcd}
\end{equation}
\end{example}

\subsection{Schur-Weyl duality for currents}
Let us proceed  with the representation categories over current algebras.
Namely, let us denote by $\RepGLt{m}{n}$ the category of finitely generated graded $\gl_m[t]$-modules (resp. $\bK[\mathfrak{S}_n]\ltimes \bK[t_1,\ldots,t_n])$-modules) with finite-dimensional graded components where, in addition, all graded components are polynomial $\gl_m$-representations of degree $n$. We denote by $\mathsf{Rep}(\mathfrak{S}_n\ltimes \bK[t_1,\ldots,t_n])$ the category of graded $\bK[\mathfrak{S}_n]\ltimes \bK[t_1,\ldots,t_n])$-modules.
\begin{lem}
The functors~\eqref{eq::Schur-Weyl} extend to the current case:
\begin{equation}
\label{eq::SW::currents}
\begin{tikzcd}
\mathsf{Rep}(\bK[\mathfrak{S}_n]\ltimes \bK[t_1,\ldots,t_n]) \arrow[rrr,shift left=1ex,"{M\mapsto M\otimes_{\bK[\mathfrak{S}_n]\ltimes \bK[t_1,\ldots,t_n]} (\bK^m[t])^{\otimes n}}"]
& & & \RepGLt{m}{n} \arrow[lll," U_{(1^n,0^{m-n})}  \mapsfrom U "]
\end{tikzcd}.
\end{equation}
\end{lem}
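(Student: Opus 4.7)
The plan is to produce the required bimodule structure on the Schur-Weyl object $(\bK^m[t])^{\otimes n}$ and then to extract the $\bK[\mathfrak{S}_n] \ltimes \bK[t_1,\ldots,t_n]$-action on the weight space $U_{(1^n,0^{m-n})}$ via the corresponding space of homomorphisms.

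First I would equip $(\bK^m[t])^{\otimes n}$ with a right action of $\bK[\mathfrak{S}_n] \ltimes \bK[t_1,\ldots,t_n]$ by letting $\mathfrak{S}_n$ permute tensor factors (exactly as in the classical Schur-Weyl setup) and by letting the polynomial generator $t_i$ act as multiplication by $t$ on the $i$-th tensor factor. The defining relation $\sigma t_i = t_{\sigma(i)} \sigma$ of the semidirect product then holds by direct inspection. Commutativity with the diagonal left $\gl_m[t]$-action reduces to two standard observations: (i) diagonal actions always commute with permutations of tensor factors, and (ii) on a single factor $\bK^m \otimes \bK[t]$, both $t_i$ and every operator $X \otimes t^k$ are $\bK[t]$-linear endomorphisms, hence commute. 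This equips $(\bK^m[t])^{\otimes n}$ with a $\gl_m[t]$--$\bK[\mathfrak{S}_n] \ltimes \bK[t_1,\ldots,t_n]$-bimodule structure whose specialization at $t = 0$ recovers the classical Schur-Weyl bimodule $(\bK^m)^{\otimes n}$.

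The left-to-right functor $M \mapsto M \otimes_{\bK[\mathfrak{S}_n] \ltimes \bK[t_1,\ldots,t_n]} (\bK^m[t])^{\otimes n}$ is then manifestly well-defined. I would check that its output lies in $\RepGLt{m}{n}$: polynomial-ness of $\gl_m$-weights of degree $n$ is inherited from the bimodule (each tensor factor contributes only $\gl_m$-weights $\epsilon_j$), while finite generation and finite-dimensionality of graded components are inherited from $M$ via a free presentation.

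For the right-to-left functor I would transport a $\bK[\mathfrak{S}_n] \ltimes \bK[t_1,\ldots,t_n]$-action onto the weight space $U_{(1^n,0^{m-n})}$ along the natural evaluation map
\[
\Hom_{\gl_m[t]}\bigl((\bK^m[t])^{\otimes n},\, U\bigr) \;\longrightarrow\; U_{(1^n,0^{m-n})}, \qquad \phi \longmapsto \phi(e_1 \otimes \cdots \otimes e_n),
\]
where the source carries a natural right action of $\bK[\mathfrak{S}_n] \ltimes \bK[t_1,\ldots,t_n]$ by precomposition with the bimodule action from the first step. The main obstacle is verifying that this evaluation map is an isomorphism when $m \geq n$. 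The plan is to reduce graded-component by graded-component to the classical Schur-Weyl identification $U_{(1^n,0^{m-n})} \simeq \Hom_{\gl_m}((\bK^m)^{\otimes n}, U)$, using that $(\bK^m[t])^{\otimes n}$ is $\gl_m[t]$-generated by its weight-$(1^n,0^{m-n})$ subspace (by the classical generation argument in each graded piece) and that this weight subspace is a free right $\bK[\mathfrak{S}_n] \ltimes \bK[t_1,\ldots,t_n]$-module of rank one on the generator $e_1 \otimes \cdots \otimes e_n$. Once this is established, both functors specialize at $t = 0$ to the classical Schur-Weyl pair~\eqref{eq::Schur-Weyl}, completing the proof of the extension claim.
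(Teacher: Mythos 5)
Your construction of the bimodule structure on $(\bK^m[t])^{\otimes n}$ and the justification of the upper (left-to-right) arrow are fine and agree with the paper, which records precisely the isomorphism of graded $\gl_m[t]$-modules $M\otimes_{\bC[\mathfrak{S}_n]\ltimes \bC[t_1,\ldots,t_n]} (\bC^m[t])^{\otimes n} \simeq M\otimes_{\mathfrak{S}_n} (\bC^m)^{\otimes n}$. The lower (right-to-left) arrow, however, is where you have a genuine gap. You propose to transport the $\bK[\mathfrak{S}_n]\ltimes\bK[t_1,\ldots,t_n]$-action onto $U_{(1^n,0^{m-n})}$ along the evaluation map $\Hom_{\gl_m[t]}\bigl((\bK^m[t])^{\otimes n},U\bigr)\to U_{(1^n,0^{m-n})}$, which requires this map to be an isomorphism. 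Injectivity does follow from cyclicity of $(\bK^m[t])^{\otimes n}$ on $e_1\otimes\cdots\otimes e_n$, but surjectivity does \emph{not} reduce ``graded-component by graded-component'' to classical Schur-Weyl: a $\gl_m[t]$-morphism is not merely a family of $\gl_m$-morphisms on graded pieces, since commuting with $X\otimes t^k$ couples the degrees. Concretely, surjectivity says that the left annihilator of $e_1\otimes\cdots\otimes e_n$ in $U(\gl_m[t])$ kills every weight-$(1^n,0^{m-n})$ vector in every object of $\RepGLt{m}{n}$; this is essentially the projectivity of $(\bK^m[t])^{\otimes n}$ in that category, which the paper establishes only afterwards (Propositions~\ref{prp::projectivegenerator}, \ref{prp::automoprhismalgebra}) using the current Howe duality. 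Invoking it here would make the argument circular.

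The paper sidesteps this entirely by defining the action on the weight subspace intrinsically, without any Hom-space: the normalizer $N(T_m)\simeq\mathfrak{S}_m\ltimes T_m$ contains $\mathfrak{S}_n\ltimes T_n$, which stabilizes the weight $(1^n,0^{m-n})$; in the current setting the subquotient $\mathfrak{S}_n\ltimes T_n[[t]]/T_n$ acts directly on $U_{(1^n,0^{m-n})}$, and $\bK[\mathfrak{S}_n]\ltimes\bK[t_1,\ldots,t_n]$ embeds into (the group algebra of / distributions on) this subquotient. This yields the lower functor with no projectivity input at all, which is the correct logical order. You should replace your evaluation-map step by this structural definition and defer the comparison with $\Hom_{\gl_m[t]}((\bK^m[t])^{\otimes n},-)$ until after projectivity of the Schur-Weyl bimodule has been proven.
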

\begin{proof}
 The coincidence of the upper arrows in the classical case~\eqref{eq::Schur-Weyl} and in the current case~\eqref{eq::SW::currents} follows from the following isomorphism of
(graded) $\gl_m[t]$-modules:
\[
M\otimes_{\bC[\mathfrak{S}_n]\ltimes \bC[t_1,\ldots,t_n]} (\bC^m[t])^{\otimes n} \simeq M\otimes_{\mathfrak{S}_n} (\bC^m)^{\otimes n}.
\]
For the lower arrow let us first recall how one gets the action of $\mathfrak{S}_n$ on the ${(1^n,0^{m-n})}$-weight subspace in the classical case. Indeed, the normalizer $N(T_m)$ of the torus $T_m\subset \GL_m$ is isomorphic to $\mathfrak{S}_m\ltimes T_m$.
Therefore,  $\mathfrak{S}_n$ is considered as the quotient of the subgroup $\mathfrak{S}_n\ltimes T_n\subset N(T_m)$ that stabilizes
the given
$\msl_m\otimes 1$-weight  $\omega_{n}$ by the normal subgroup $T_n$. In the current case, there exists an analogous embedding of the algebra $\bK[\mathfrak{S}_n]\ltimes \bK[t_1,\ldots,t_n]$ into the subquotient
$\mathfrak{S}_n\ltimes T_n[[t]]/T_n$ and the latter inherits the natural action on the weight subspace  $U_{(1^n,0^{m-n})}$.
\end{proof}

\begin{thm}
\label{thm::SW::currents}	
Assuming $m\ge n$, the functors~\eqref{eq::SW::currents} define an equivalence of categories.
\end{thm}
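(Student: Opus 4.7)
The approach is to show that the two functors of~\eqref{eq::SW::currents} are mutually inverse equivalences by establishing that the unit and counit of the natural adjunction are both isomorphisms. Write $F$ for the tensor functor, $G$ for the weight-subspace functor, and $B := (\bK^m[t])^{\otimes n}$, viewed as a $\left(\gl_m[t],\ \bK[\mathfrak{S}_n]\ltimes \bK[t_1,\ldots,t_n]\right)$-bimodule with $\mathfrak{S}_n$ permuting tensor slots and $t_j$ multiplying the $j$-th slot by $t$.

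For the unit $\eta_M : M \to G(F(M))$, the key point is the isomorphism $B_{(1^n,0^{m-n})} \simeq \bK[\mathfrak{S}_n]\ltimes\bK[t_1,\ldots,t_n]$ of right modules: the natural monomial basis of the weight subspace, given by tensors $e_{\sigma(1)}t^{k_1}\otimes\cdots\otimes e_{\sigma(n)}t^{k_n}$ for $(\sigma;k_1,\ldots,k_n)\in\mathfrak{S}_n\times\bZ_{\geq 0}^n$, matches the natural basis of the right algebra. Because extracting the $(1^n,0^{m-n})$-weight subspace commutes with the tensor product over that algebra (its action being weight-preserving), one immediately obtains $G(F(M)) \simeq M \otimes_{\bK[\mathfrak{S}_n]\ltimes\bK[t_1,\ldots,t_n]} B_{(1^n,0^{m-n})} \simeq M$.

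For the counit $\varepsilon_U : F(G(U)) \to U$, extend the inclusion $U_{(1^n,0^{m-n})} \hookrightarrow U$ by $\gl_m[t]$-equivariance, sending $u \otimes (v_{i_1}t^{k_1}\otimes\cdots\otimes v_{i_n}t^{k_n})$ to $(e_{i_1,1}\otimes t^{k_1})\cdots(e_{i_n,n}\otimes t^{k_n})\cdot u$ in $U$, the $e_{i,j}$ being matrix units; the factors sit in distinct columns and so commute pairwise, making this well-defined modulo the right algebra's relations. Surjectivity comes from a classical Schur-Weyl generator argument applied graded-piece-by-graded-piece: when $m\ge n$, every polynomial $\gl_m$-representation of degree $n$ is generated over $\gl_m$ by its $(1^n,0^{m-n})$-weight subspace, so $U$ is generated as a $\gl_m[t]$-module by $U_{(1^n,0^{m-n})}$.

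The main obstacle is injectivity of $\varepsilon_U$. My plan is a devissage argument in the highest weight structure on $\RepGLt{m}{n}$ recalled in Example~\ref{ex::SL::HW}. First, verify $\varepsilon_U$ on the standard modules $\bW_\lambda$ by identifying $(\bW_\lambda)_{(1^n,0^{m-n})}$ with the global Kato module $\mathbb{K}_\lambda$ and checking that $\mathbb{K}_\lambda \otimes_{\bK[\mathfrak{S}_n]\ltimes\bK[t_1,\ldots,t_n]} B$ has the same graded character as $\bW_\lambda$ via Theorem~\ref{HoweCurrent}. Then propagate the isomorphism to arbitrary $U$ using that $F$ is right exact and every object admits a resolution by standard modules, via the five lemma. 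A more conceptual alternative is a Morita route: show $\mathrm{End}_{\gl_m[t]}(B) \simeq (\bK[\mathfrak{S}_n]\ltimes\bK[t_1,\ldots,t_n])^{\mathrm{op}}$ by restricting first to $\gl_m$-equivariance and invoking classical Schur-Weyl to obtain $\bK[\mathfrak{S}_n]$, then imposing commutation with $\gl_m \otimes t$ to pick up the polynomial factor, and exhibit $B$ as a progenerator of the category.
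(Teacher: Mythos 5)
Your ``more conceptual alternative'' --- showing that $B=V[t]^{\otimes n}$ is a progenerator with the right endomorphism algebra and invoking Morita --- is exactly the paper's route (Propositions~\ref{prp::GL::HWC}, \ref{prp::projectivegenerator}, \ref{prp::automoprhismalgebra} and Corollary~\ref{cor::proj_gen}). But your primary unit/counit/devissage route has gaps, and even the Morita sketch elides the two hard points of the paper's argument.

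On the primary route: (a) For the five-lemma devissage you implicitly need $F=-\otimes_A B$ to be exact. This does hold --- each weight space $B_\mu$ is a direct summand of a free $A$-module, via the idempotent $\tfrac{1}{|\mathfrak{S}_\mu|}\sum_{\tau\in\mathfrak{S}_\mu}\tau$ for the Young subgroup $\mathfrak{S}_\mu$ stabilizing the content $\mu$ --- but you do not observe it, and it is not automatic. (b) The verification of $\varepsilon$ on standard modules is circular as phrased: you propose to ``identify $(\bW_\lambda)_{(1^n,0^{m-n})}$ with the global Kato module $\mathbb{K}_\lambda$,'' but in the paper $\mathbb{K}_\lambda$ is \emph{defined} as the standard object of the highest weight structure that is \emph{deduced} from the equivalence you are trying to establish. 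You would need an independent construction of $\mathbb{K}_\lambda$ and a direct character computation; Theorem~\ref{HoweCurrent} alone does not compute $\ch_q\bigl(\mathbb{K}_\lambda\otimes_A B\bigr)$. (c) ``Every object admits a resolution by standard modules'' is false in general; you need to phrase the devissage via projective presentations and the fact that the projective generator has a finite standard filtration.

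On the Morita sketch: the statement ``restrict to $\gl_m$-equivariance and invoke classical Schur-Weyl to obtain $\bK[\mathfrak{S}_n]$'' is not correct --- the $\gl_m$-equivariant (but not $\gl_m[t]$-equivariant) endomorphisms of the graded module $V[t]^{\otimes n}$ form something much larger than $\bK[\mathfrak{S}_n]$ (they include shifts between graded pieces). The paper instead writes down the explicit map $\psi$ of~\eqref{eq::morph::proj} and checks it is an isomorphism by decomposing $\Hom_{\gl_V[t]}(P_\lambda, V[t]^{\otimes n})$ via highest weight vectors. More seriously, you assert $B$ is a progenerator without argument. This is the heart of Proposition~\ref{prp::projectivegenerator}: projectivity of $S^n(V\otimes U\otimes\bK[t])$ is proved by constructing a surjection from a direct sum of $P_\lambda^{\leq n\omega_1}$'s (using that the module is generated in degree zero, which is part of Theorem~\ref{HoweCurrent}) and then comparing graded characters via the BGG reciprocity~\eqref{eq::BGG:ne1}. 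Neither ingredient is mentioned in your proposal. So the overall strategy matches the paper, but the substantive content --- the character identity from current Howe duality plus BGG reciprocity, and the projective-cover analysis of the endomorphism algebra --- is missing.
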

\begin{proof}
The proof is separated into Propositions~\ref{prp::GL::HWC},\ref{prp::projectivegenerator},\ref{prp::automoprhismalgebra} below.
Let us explain the strategy:
First, in Proposition~\ref{prp::GL::HWC} we explain why the category $\RepGLt{m}{n}$ is a highest weight category in the sence recalled in Section~\ref{sec::HWC}. Second, the BGG reciprocity for this category is used in Proposition~\ref{prp::projectivegenerator}
where we prove that for any auxiliary vector space $U$ the $\gl_{V}[t]$-module $S^{n}(U\otimes V\otimes\bK[t])$ is a projective module
in the category  $\RepGLt{m}{n}$. Moreover, for $\dim U>n$ the direct summand $V[t]^{\otimes n}\subset S^{n}(U\otimes V\otimes\bK[t])$
is a projective generator of the category $\RepGLt{m}{n}$.
Finally, in Proposition~\ref{prp::automoprhismalgebra} we show that the algebra of endomorphisms of the latter projective generator is isomorphic to the algebra $\bK[\mathfrak{S}_n]\ltimes \bK[t_1,\ldots,t_n]$ and
the classical Morita Theorem (\cite{Mo}) implies the equivalence of categories~\eqref{eq::SW::currents}.
\end{proof}	

Let us notice several important properties of the right-hand side category of~\eqref{eq::SW::currents} that are crucial for the proof.
Denote by $\RepSLt{m}{n}$ the category of $\mathfrak{sl}_m[t]$ representations with graded components being polynomial representations
of degree $n$. Recall that $\dim V=m$.
\begin{prop}
\label{prp::GL::HWC}	
Assume that $\dim V\geq n$ and fix a number $r>n$. Then the category $\RepGLt{V}{n}$ is isomorphic to the Serre subcategory
$\Rep(\msl_r[t])^{\leq n\omega_1}\subset \Rep(\msl_r[t])$ generated by simple finite-dimensional $\msl_r$-modules whose highest weight
is less than or equal to $n\omega_1$.	
\end{prop}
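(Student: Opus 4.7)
The plan is to establish the equivalence as an abstract equivalence of highest weight categories, both indexed by partitions $\lambda\vdash n$. I would proceed in three steps: match the posets of simples, match the standard modules, and construct an explicit comparison functor.

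First, I would match the indexing sets of simples. Simples in $\RepGLt{V}{n}$ (up to grading shifts) are the irreducible polynomial $\gl_V$-modules $V_\lambda^{\gl_V}$ for partitions $\lambda\vdash n$ with $\ell(\lambda)\le\dim V$; the length condition is vacuous since $\dim V\ge n$. Simples in $\Rep(\msl_r[t])^{\le n\omega_1}$ are the $V_\mu^{\msl_r}$ for dominant $\msl_r$-weights $\mu$ with $n\omega_1-\mu$ in the positive root cone. A direct computation with the type $A$ Cartan data shows these $\mu$ are precisely the weights $\ov\lambda:=\sum_{i=1}^{r-1}(\lambda_i-\lambda_{i+1})\omega_i$ for partitions $\lambda\vdash n$ of length $\le r$, and $r>n$ makes the length condition vacuous. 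Both sides thus have simples indexed by partitions of $n$ (with $\bZ$-grading shifts), and the dominance partial order matches on both sides.

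Second, I would match the standard (global Weyl) modules. On the $\msl_r$ side the standard module at $\lambda$ is $\bW_{\ov\lambda}^{\msl_r}$. On the $\gl_V$ side it is $\bW_\lambda^{\gl_V}$, defined by~\eqref{Wmgln} as $\bW_{\ov\lambda}^{\msl_V}\otimes\U(\Id[t])/\ker\varphi_{\lambda_{\dim V}}$. When $\dim V>n$, one has $\lambda_{\dim V}=0$ for every $\lambda\vdash n$, the map $\varphi_0$ is trivial, and the second tensor factor collapses to $\bK$. Hence the action of $\gl_V[t]=\msl_V[t]\oplus\Id[t]$ on $\bW_\lambda^{\gl_V}$ reduces to the standard $\msl_V[t]$-action on $\bW_{\ov\lambda}^{\msl_V}$, together with $\Id\otimes 1\mapsto n\cdot\mathrm{id}$ and $\Id\otimes t^k\mapsto 0$ for $k\ge 1$. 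The borderline case $\dim V=n=\ell(\lambda)$, i.e., $\lambda=(1^n)$, must be handled separately, by twisting by the determinant character.

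Finally, I would construct the equivalence functor. Taking $\dim V=r$ with $r>n$, define $F:\Rep(\msl_r[t])^{\le n\omega_1}\to\RepGLt{V}{n}$ by extending the $\msl_r[t]$-action to a $\gl_r[t]$-action via $\Id\otimes 1\mapsto n$ and $\Id\otimes t^k\mapsto 0$ for $k\ge 1$, which is well-defined because every dominant $\mu\le n\omega_1$ satisfies $|\mu|=n$. By the previous step, $F$ sends $\bW_{\ov\lambda}^{\msl_r}$ to $\bW_\lambda^{\gl_V}$. For full faithfulness, the key lemma is the rigidity statement that $\Id[t]_+$ must act trivially on every object of $\RepGLt{V}{n}$: this follows by propagating the vanishing from standards to arbitrary objects, using that each projective admits a filtration by standards (BGG reciprocity in the highest weight structure of Example~\ref{ex::SL::HW}). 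Essential surjectivity then follows by matching projective covers, and the stability isomorphism $\RepGLt{V'}{n}\simeq\RepGLt{V}{n}$ for $\dim V'\ge\dim V>n$ (obtained by taking the $(1^n,0^{\dim V'-\dim V})$-isotypic component) allows us to drop the hypothesis $\dim V=r$ and recover the statement for arbitrary $\dim V\ge n$ and $r>n$. The main obstacle I expect is precisely this rigidity step for $\Id[t]_+$ on non-standard objects, since $\Id[t]$ is an abelian ideal lying in the centre of $\gl_V[t]$ and could a priori generate nontrivial extensions; once this is under control, everything else is bookkeeping inside the common highest weight category.
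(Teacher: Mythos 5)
Your proposal follows the same skeleton as the paper's proof (identify the simple objects of both categories with partitions $\lambda\vdash n$, match the partial orders), but then tries to supply detail where the paper only asserts: the paper's proof ends with ``which implies the desired equivalence of corresponding categories of representations of current algebras,'' with no argument, whereas you propose an explicit comparison functor $F$, match the standard objects, and attempt to prove full faithfulness and essential surjectivity. That extra detail is genuinely needed, and your instinct to isolate the behaviour of $\Id[t]_+$ as ``the main obstacle'' is exactly the right thing to worry about.

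Unfortunately, the rigidity lemma on which your argument hinges --- that $\Id\otimes t^k$ ($k\ge 1$) acts trivially on every object of $\RepGLt{V}{n}$ --- does not hold, and the argument you sketch for it would not establish it in any case. First, it is circular: you propagate triviality from standards to arbitrary objects by filtering projectives by standards, but the existence of such filtrations is part of the highest weight structure of $\RepGLt{V}{n}$, which is precisely what Proposition~\ref{prp::GL::HWC} is meant to supply via the claimed equivalence. Second, even granting such filtrations, triviality of $\Id[t]_+$ on all subquotients gives only that $\Id\otimes t^k$ is nilpotent, not zero. Third, and decisively, there is a concrete counterexample: the module $S^n(V\otimes\bK[t])$ with its tautological $\gl_V[t]$-action lies in $\RepGLt{V}{n}$ (it is finitely generated by $S^n(V)$ by Lemma~\ref{zerolevelgeneratedleftmodule}, has finite-dimensional graded pieces which are polynomial of degree $n$), yet $\Id\otimes t^k$ acts on it by the nonzero degree-raising operator that multiplies one tensor factor by $t^k$. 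As a $\msl_V[t]$-module this is isomorphic to the global Weyl module $\bW_{n\omega_1}$, on which the $\gl$-Weyl module structure of~\eqref{Wmgln} prescribes the trivial $\Id[t]_+$-action (since $\lambda_{\dim V}=0$); so these are two non-isomorphic objects of $\RepGLt{V}{n}$ restricting to the same $\msl_r[t]$-module, which is incompatible with full faithfulness of the restriction/extension pair you are using. This is not just a gap in your write-up: it reflects an imprecision already present in the paper's definition of $\RepGLt{m}{n}$ and in its one-line ``implies.'' To make the statement literally correct one must either cut down $\RepGLt{V}{n}$ (e.g.\ to modules on which $\Id[t]_+$ acts in a prescribed way) or enlarge the $\msl_r[t]$ side by the corresponding commuting action, and neither is done in the proof you give.
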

\begin{proof}
Irreducible polynomial $\GL_V$-representations of degree $n$ are indexed by Young diagrams $\lambda\vdash n$, with $l(\lambda)\leq \dim V$
and the corresponding highest weight is equal to $\sum_{i=1}^{m} \lambda_i \varepsilon_i$. Since $m\ge n$ the condition on the
length $l(\lambda)\leq \dim V$ can be omitted.
Similarly, for $r>n$ the same set $\{\sum_{i=1}^r \lambda_i \varepsilon_i | \lambda_1\ge \ldots\ge \lambda_m\}$ indexes the set of dominant integral $\msl_r$-weights that are less or equal then $n\omega_1\simeq n\varepsilon_1$ with respect to the standard partial ordering
$\leq_{\msl}$ (compare with~\eqref{eq::def::ordering} defined for a general semisimple Lie algebra):
\[
\lambda\leq_{\msl} \mu \Leftrightarrow (\mu - \lambda) \text{ is the } \mathbb{Z}_{\geq 0}\text{-sum of positive simple roots }
\varepsilon_i - \varepsilon_{i+1}.
\]
In particular, the weight $\omega_{n}\simeq \varepsilon_1+\ldots+\varepsilon_n$ is the minimal element in this set.
Therefore, for $r>n$ and $\dim V\geq n$ we have an equivalence of categories $\Rep(\GL_V)^{(n)}$ and $\Rep(\msl_r)^{\leq n\omega_1}$
which implies the desired equivalence of corresponding categories of representations of current algebras.
\end{proof}

Thanks to Corollary~\ref{cor::HW::leq} and Example~\ref{ex::SL::HW} we know that
the category $\Rep(\msl_m[t])^{\leq n\omega_1}$ is a highest weight category, whose simples are irreducible $\msl_m$-modules $V_{\lambda}$ indexed by partitions $\lambda\vdash n$, standard modules coincide with global Weyl modules $\bW_{\lambda}$, proper standard are
local Weyl modules and projectives $P_{\lambda}^{\leq n\omega_1}$ are the quotients of $P_{\lambda}$ by the submodules
generated by subspaces of the "wrong" $\msl_m$-weights $\mu\not\leq n\omega_1$. In particular, the BGG reciprocity
\eqref{cor::BGG} is valid. Namely, projective cover  $P_{\lambda}^{\leq n\omega_1}$ admits a filtration with successive
quotients isomorphic to standard modules $\bW_{\mu}$  with  $\lambda \leq \mu \leq n\omega_1$, and the multiplicities satisfy
the following identity:
\begin{equation}
\label{eq::BGG:ne1}
{ [P_{\lambda}^{\leq n\omega_1}: \bW_{\mu}]= [W_{\mu}:V_{\lambda}] }  =[P_{\lambda}: \bW_{\mu}].
\end{equation}
From now on we suppose $\dim V=m>n$ in order to be able to deal with a highest weight category $\Rep(\msl_m[t])^{\leq n\omega_1}$.
\begin{prop}\label{prp::projectivegenerator} Assume $\dim V>n$ , $\dim U \geq n$.
	The $\msl_{V}[t]$-module $S^n(V \otimes U \otimes \bK[t])$ is a projective module in the category $\Rep(\msl_V[t])^{\leq n \omega_1}$.
\end{prop}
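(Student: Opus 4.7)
The plan is to exhibit $S^n(V\otimes U\otimes \bK[t])$ as an explicit direct sum of projective covers in $\Rep(\msl_V[t])^{\leq n\omega_1}$, obtained from the classical Howe decomposition of its loop-degree zero subspace. By classical Howe duality (Theorem~\ref{Howeduality}), $S^n(V\otimes U)\simeq \bigoplus_{\mu\vdash n}V_\mu\otimes U_\mu^o$. Each inclusion $V_\mu\hookrightarrow S^n(V\otimes U)\subset S^n(V\otimes U\otimes\bK[t])$ lifts by Frobenius reciprocity for $(\gl_V,\gl_V[t])$ to a $\gl_V[t]$-equivariant map from the induced projective $P_\mu=\mathrm{Ind}^{\gl_V[t]}_{\gl_V}(V_\mu)$; since the target has $\msl_V$-weights bounded by $n\omega_1$, this factors through the projective cover $P_\mu^{\leq n\omega_1}$. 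Collecting these gives a $\gl_V[t]$-equivariant morphism
\[
\Phi : Q:=\bigoplus_{\mu\vdash n} P_\mu^{\leq n\omega_1}\otimes U^o_\mu \;\longrightarrow\; S^n(V\otimes U\otimes \bK[t]).
\]
The image of $\Phi$ contains $S^n(V\otimes U)$ by construction, and by the final assertion of Theorem~\ref{HoweCurrent}, $S^n(V\otimes U\otimes\bK[t])$ is generated as a $\gl_V[t]$-module by $S^n(V\otimes U)$. Hence $\Phi$ is surjective.

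To upgrade surjectivity to bijectivity I would compare graded characters. Theorem~\ref{HoweCurrent}, combined with the freeness of $\bW_\lambda^o$ as an $\A_\lambda$-module, rewrites each filtration subquotient $\bW_\lambda\otimes_{\A_\lambda}\bW_\lambda^o$ as a free $\bW_\lambda$-module of graded rank $\dim_q W^{\gl_U}_\lambda$ as a $\gl_V[t]$-module, yielding
\[
\ch_q S^n(V\otimes U\otimes \bK[t]) \;=\; \sum_{\lambda\vdash n} \dim_q W^{\gl_U}_\lambda \cdot \ch_q \bW_\lambda.
\]
On the other hand, BGG reciprocity~\eqref{eq::BGG:ne1} gives $\ch_q P_\mu^{\leq n\omega_1}=\sum_\lambda [W_\lambda:V_\mu]_q\,\ch_q \bW_\lambda$, so
\[
\ch_q Q \;=\; \sum_{\lambda\vdash n}\Bigl(\sum_\mu \dim U_\mu^o\cdot [W_\lambda:V_\mu]_q\Bigr)\ch_q \bW_\lambda.
\]
The inner coefficient equals $\dim_q W^{\gl_U}_\lambda$ upon specializing the Kostka-type expansion $\ch_q W^{\gl_U}_\lambda(y)=\sum_\mu [W_\lambda:V_\mu]_q\, s_\mu(y)$ at $y_1=\dots=y_{\dim U}=1$. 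Therefore $\ch_q Q=\ch_q S^n(V\otimes U\otimes \bK[t])$ and the surjection $\Phi$ is an isomorphism.

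Since $Q$ is by definition a direct sum of projective modules $P_\mu^{\leq n\omega_1}$ in $\Rep(\msl_V[t])^{\leq n\omega_1}$, the same holds for $S^n(V\otimes U\otimes \bK[t])$, which is the claimed projectivity. The principal technical ingredient is Theorem~\ref{HoweCurrent}: its $\Delta$-filtration of $S^n(V\otimes U\otimes \bK[t])$ by global Weyl modules $\bW_\lambda$ with multiplicities $\dim_q W^{\gl_U}_\lambda$ is what makes the character matching work, while the rest is a combination of standard Frobenius reciprocity and the BGG identity~\eqref{eq::BGG:ne1} for the projective covers in the highest weight category.
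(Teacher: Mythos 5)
Your proof is correct and follows essentially the same route as the paper's: both use the classical Howe decomposition of the loop-degree zero part to build a surjection from a direct sum of projective covers (justified by the generation statement in Theorem~\ref{HoweCurrent}), and then match $q$-characters using the $\Delta$-filtration from Theorem~\ref{HoweCurrent} together with BGG reciprocity~\eqref{eq::BGG:ne1} to promote the surjection to an isomorphism. The only cosmetic difference is that you write the projective module as $\bigoplus_\mu P_\mu^{\leq n\omega_1}\otimes U_\mu^o$ while the paper writes $\bigoplus_\lambda (P_\lambda^{\leq n\omega_1})^{\oplus \dim U_\lambda}$, which as $\msl_V[t]$-modules are the same.
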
	
\begin{proof}
Thanks to the classical Howe duality
\[
S^N(V\otimes U) = \bigoplus_{\substack{\la\vdash N\\	l(\lambda) \leq \min(\dim(V),\dim(U))}} V_{\lambda} \otimes U_{\lambda}
\]
we know that $\msl_V\otimes 1$-irreducible submodules of the module under consideration belong to $\Rep(\msl_V)^{\leq n \omega_1}$.
The restriction $l(\lambda) \leq \min(\dim(V),\dim(U))$ can be omitted thanks to the assumption $\dim V,\dim U\geq n$.
Theorem~\ref{HoweCurrent} implies that, in addition, the $\msl_V[t]$-module $S^n(V \otimes U \otimes \bK[t])$ is generated by the component of zero degree $\oplus_{\lambda\vdash n} V_\lambda\otimes U_{\lambda}$.
Hence, there exist a surjection of $\msl_V[t]$-modules from a projective cover of the zero level:
\begin{equation}
\label{eq::p::suv}
\pi: \oplus_{\lambda\vdash n} (P_{\lambda}^{\leq n\omega_1})^{\oplus \dim U_{\lambda}} \twoheadrightarrow S^n(V \otimes U \otimes \bK[t]).
\end{equation}
Moreover, the $q$-graded $\msl_V$-character of this module was also computed in~\eqref{HoweCurrent} as a part of the Howe duality for currents:
\[
\ch_{q} S^N(V \otimes U \otimes \bC[t]) = \sum_{\lambda\vdash N} \ch_q\bW_{\lambda}(V) \dim_q W_{\lambda}(U). \]
Here by $\dim_q$ we denote the graded dimension of a vector space.
The BGG reciprocity~\eqref{eq::BGG:ne1} for the highest weight category $\Rep(\msl_V[t])$ and its subcategory
$\Rep(\msl_V[t])^{\leq n\omega_1}$ implies the following simplification of characters:
\begin{multline*}
\ch_{q} S^n(V \otimes U \otimes \bK[t]) \stackrel{\text{\ref{HoweCurrent}}}{=}
\sum_{\lambda\vdash n}\ch_q(\bW_{\lambda}(V)) \dim_q W_{\lambda}(U) =\\
=\sum_{\lambda\vdash n}\ch_q(\bW_{\lambda}(V)) \left( \sum_{\mu\leq \lambda} \dim_q U_{\lambda} \cdot  [W_{\lambda}(U):U_{\mu}]\right) \stackrel{\text{\eqref{eq::BGG:ne1}}}{=}
\\ =\sum_{\mu\leq\lambda\leq n\omega_1} [P_{\mu}:\bW_{\lambda}]\cdot \ch_q \bW_{\lambda} \dim_q U_{\mu}  \stackrel{\text{\eqref{eq::BGG:ne1}}}{=} \sum_{\mu\leq n\omega 1} \ch_q P_{\mu}^{\leq n\omega_1} \dim_q U_{\mu}.
\end{multline*}
Consequently, the surjection $\pi$ in~\eqref{eq::p::suv} is an isomorphism.
\end{proof}

\begin{cor}
\label{cor::proj_gen}
Let $\dim V>n$.
Then $\msl_V[t]$-module $V[t]^{\otimes n}$ is a projective generator in the category $\Rep(\msl_V[t])^{\leq n\omega_1}$.	
\end{cor}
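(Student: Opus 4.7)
The plan is to realize $V[t]^{\otimes n}$ as a $\mgl_n$-torus weight subspace of $S^n(V\otimes U\otimes\bK[t])$ for a suitable auxiliary space $U$, making it a direct summand of a module already known (by Proposition~\ref{prp::projectivegenerator}) to be projective, and then to read off the multiplicities to obtain the generator property.

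First I would take $U=\bK^n$ with its tautological $\mgl_n$-action, so that $S^n(V\otimes U\otimes\bK[t])$ carries commuting left actions of $\msl_V[t]$ and $\mgl_n$. Since the Cartan torus $T_n\subset\mgl_n$ commutes with $\msl_V[t]$, the $T_n$-weight decomposition of $S^n(V\otimes U\otimes\bK[t])$ is a decomposition as $\msl_V[t]$-modules, and in particular each weight subspace is a direct summand. A basis element is an unordered $n$-tuple $\{v_{i_\ell}\otimes u_{j_\ell}\otimes t^{k_\ell}\}_{\ell=1}^n$ with $T_n$-weight $\sum_\ell \varepsilon_{j_\ell}$; this weight equals $(1,\ldots,1)$ precisely when $(j_1,\ldots,j_n)$ is a permutation of $(1,\ldots,n)$, and the built-in $\mathfrak{S}_n$-symmetrization lets us normalize to $j_\ell=\ell$. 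The resulting canonical bijection with basis tensors of $(V\otimes\bK[t])^{\otimes n}$ intertwines the $\msl_V[t]$-actions, identifying the $(1^n)$-weight subspace with $V[t]^{\otimes n}$. This is a variant of the classical identification $U^{\otimes n}\simeq S^n(V\otimes U)_{(1^n,0^{m-n})}$ from Example~\ref{ex::Howe::SW}, with the roles of $V$ and $U$ swapped. By Proposition~\ref{prp::projectivegenerator}, $S^n(V\otimes U\otimes\bK[t])$ is projective in $\Rep(\msl_V[t])^{\leq n\omega_1}$, so its summand $V[t]^{\otimes n}$ is projective as well.

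For the generator property I would refine the decomposition established inside the proof of Proposition~\ref{prp::projectivegenerator},
\[
S^n(V\otimes U\otimes\bK[t])\simeq \bigoplus_{\lambda\vdash n}(P_\lambda^{\leq n\omega_1})^{\oplus\dim U_\lambda},
\]
by taking $T_n$-weight-$(1^n)$ parts on both sides. On the right $P_\lambda^{\leq n\omega_1}$ carries no $T_n$-action, so the multiplicity $\dim U_\lambda$ gets replaced by $\dim(U_\lambda)_{(1^n)}$, which by classical Schur-Weyl equals $\dim\mathbb{S}_\lambda$. This yields
\[
V[t]^{\otimes n}\simeq\bigoplus_{\lambda\vdash n}(P_\lambda^{\leq n\omega_1})^{\oplus\dim\mathbb{S}_\lambda},
\]
and since $\dim\mathbb{S}_\lambda\geq 1$ for every $\lambda\vdash n$, every indecomposable projective of the category $\Rep(\msl_V[t])^{\leq n\omega_1}$ appears as a summand of $V[t]^{\otimes n}$. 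Hence $V[t]^{\otimes n}$ is a projective generator.

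The main obstacle is the first identification, which amounts to a careful bookkeeping of the $\mgl_n$-weights inside the symmetric power; once this is in place, projectivity transfers automatically from the ambient module and the generator statement reduces to the positivity of $\dim\mathbb{S}_\lambda$ for $\lambda\vdash n$, which is classical.
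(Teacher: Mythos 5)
Your proposal is correct and follows essentially the same route as the paper: realize $V[t]^{\otimes n}$ as a $T_n$-weight summand of $S^n(V\otimes U\otimes\bK[t])$, invoke the projectivity from Proposition~\ref{prp::projectivegenerator}, and read off multiplicities $\dim\mathbb{S}_\lambda\geq 1$ from the Howe/Schur--Weyl decomposition. The only cosmetic difference is that you take $\dim U=n$ (which is allowed, since Proposition~\ref{prp::projectivegenerator} only needs $\dim U\geq n$) whereas the paper takes $\dim U>n$ and works with the weight $(1^n,0^{r-n})$.
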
	
\begin{proof}
Take $r=\dim U$ such that $\dim V \geq \dim U > n$.
Notice that the  $\msl_V[t]$-module $S^n(V\otimes U\otimes\bC[t])$ admits a direct sum decomposition with respect to the
$\msl_U$-weight subspaces. In particular, each summand is a projective module in the subcategory $\Rep(\msl_V[t])^{\leq n\omega_1}$.
As mentioned in~\eqref{eq::Howe::SW} (with $U$ and $V$ interchanged) the $\GL_U$ weight subspace of weight
$(1^n,0^{r-n})$ of the module $S^n(V\otimes U\otimes\bC[t])$ is isomorphic to the $\gl_V[t]$-module $V[t]^{\otimes n}$ implying the projectivity of the latter one.
Moreover, we have the following decomposition as $\msl_V[t]$-modules:
\begin{multline*}
V[t]^{\otimes n} \simeq (S^n(V\otimes U\otimes\bC[t]))_{(1^n,0^{r-n})} \simeq
%\\ \simeq
\oplus_{\lambda\vdash n} P_{\lambda}^{\leq n\omega_1} \otimes (U_{\lambda})_{(1^n,0^{r-n})} \simeq \oplus_{\lambda\vdash n}
P_{\lambda}^{\leq n\omega_1} \otimes \mathbb{S}_{\lambda}.
\end{multline*}
Here $U_{\lambda}$ denotes the irreducible $\gl_r$-module of highest weight $\lambda$ and by $\mathbb{S}_{\lambda}$ we
denote the Specht module which is different from zero for all $\lambda\vdash n$. In particular, the projective covers of all
irreducibles appears as summands of the projective module $V[t]^{\otimes n}$. Hence this module is a projective generator.
\end{proof}	

\begin{prop}
	\label{prp::automoprhismalgebra}
The map
\begin{equation}
\label{eq::morph::proj}
 \psi: (\prod_{i=1}^{n} t_i^{a_i})\cdot \sigma {\longrightarrow} \left(v_1 t^{b_1}\otimes \ldots v_n t^{b_n}
\mapsto v_{\sigma^{-1}(1)} t^{a_1+b_{\sigma^{-1}(1)}} \otimes \ldots \otimes v_{\sigma^{-1}(n)} t^{a_n+b_{\sigma^{-1}(n)}}\right)
\end{equation}	
defines an isomorphism of the degenerate affine Hecke algebra $\bK[\mathfrak{S}_n] \ltimes \bK[t_1, \dots, t_n]$ and the algebra of endomorphisms of the $\msl_V[t]$-module $V[t]^{\otimes n}$.
\end{prop}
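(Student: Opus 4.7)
The plan is to check that $\psi$ is a well-defined algebra homomorphism into $\mathrm{End}_{\msl_V[t]}(V[t]^{\otimes n})$ and then to obtain bijectivity by restricting to the degree-zero subspace $V^{\otimes n}\subset V[t]^{\otimes n}$ and invoking the classical Schur--Weyl duality. That $\psi$ takes values in $\msl_V[t]$-endomorphisms and satisfies the semidirect-product relation $\sigma t_i \sigma^{-1}=t_{\sigma(i)}$ is a direct check from~\eqref{eq::morph::proj}: permutation of tensor factors commutes with the diagonal $\msl_V[t]$-action, and multiplying the $i$-th slot by~$t$ does so as well.

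Injectivity is immediate by applying $\psi\bigl(\sum_{\sigma,a}c_{\sigma,a}(\prod_i t_i^{a_i})\sigma\bigr)$ to $v_1\otimes\cdots\otimes v_n$ with $v_1,\dots,v_n$ distinct basis vectors (available since $m>n$): the resulting monomial tensors $v_{\sigma^{-1}(1)}t^{a_1}\otimes\cdots\otimes v_{\sigma^{-1}(n)}t^{a_n}$ are linearly independent in $V[t]^{\otimes n}$. For surjectivity, I introduce the restriction map
\[
\mathrm{res}\colon \mathrm{End}_{\msl_V[t]}(V[t]^{\otimes n})\longrightarrow \mathrm{Hom}_{\msl_V}(V^{\otimes n},V[t]^{\otimes n}),
\]
and study its composition with $\psi$. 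The crucial point is that $\mathrm{res}$ is injective, which amounts to the assertion that $V^{\otimes n}$ generates $V[t]^{\otimes n}$ as an $\msl_V[t]$-module. This follows from Lemma~\ref{zerolevelgeneratedleftmodule} by realizing $V[t]^{\otimes n}$ as the $(1^n,0^{r-n})$ weight subspace of $S^n(V\otimes U\otimes\mathbb{K}[t])$ for an auxiliary space $U$ of dimension $r\ge n$ and projecting via the $\mgl_U$-weight decomposition, which commutes with the $\msl_V[t]$-action.

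To identify the target of $\mathrm{res}$, observe that as $\msl_V$-modules one has $V[t]^{\otimes n}\simeq V^{\otimes n}\otimes\mathbb{K}[t_1,\dots,t_n]$ with trivial action on the polynomial factor, so classical Schur--Weyl duality (which holds since $m\ge n$) yields
\[
\mathrm{Hom}_{\msl_V}(V^{\otimes n},V[t]^{\otimes n})\;\simeq\;\mathbb{K}[\mathfrak{S}_n]\otimes\mathbb{K}[t_1,\dots,t_n].
\]
Under this identification the composition $\mathrm{res}\circ\psi$ sends the PBW-type basis $\{(\prod_i t_i^{a_i})\sigma\}$ of $\mathbb{K}[\mathfrak{S}_n]\ltimes\mathbb{K}[t_1,\dots,t_n]$ bijectively onto the basis $\{\sigma\otimes t^a\}$, hence is a linear bijection. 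A standard diagram chase---$\psi$ and $\mathrm{res}$ both injective, their composition bijective---then forces both to be bijections and establishes the asserted algebra isomorphism. The only substantive step is the generation of $V[t]^{\otimes n}$ by $V^{\otimes n}$; everything else is either routine verification or a direct appeal to classical Schur--Weyl.
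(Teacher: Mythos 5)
Your proposal is correct in substance but follows a genuinely different route than the paper, so let me compare. The paper's proof leans on the projective decomposition of Corollary~\ref{cor::proj_gen}, namely $V[t]^{\otimes n}\simeq\bigoplus_{\lambda\vdash n}P_\lambda^{\leq n\omega_1}\otimes\mathbb{S}_\lambda$: a homomorphism from each projective cover $P_\lambda$ is determined by the image of the degree-zero copy of $V_\lambda$, so every endomorphism is encoded by a tuple $(\bar\varphi_\lambda)_\lambda\in\bigoplus_\lambda{\rm Mat}_{\dim\mathbb{S}_\lambda}(\bK[t_1,\dots,t_n])$, and the argument is closed by matching graded dimensions against $\bK[\mathfrak{S}_n]\ltimes\bK[t_1,\dots,t_n]\simeq\bigoplus_\lambda{\rm Mat}_{\dim\mathbb{S}_\lambda}(\bK[t_1,\dots,t_n])$. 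You instead introduce the restriction map ${\rm res}$ to the $q$-degree-zero subspace $V^{\otimes n}$, prove it injective via a generation statement, identify its target by classical Schur--Weyl, and show ${\rm res}\circ\psi$ is a bijection by an explicit basis match; this avoids the isotypic-component bookkeeping and the graded-dimension count entirely, which is a genuine simplification. Both proofs ultimately rest on the same nontrivial input (the Howe machinery of Section~\ref{TypeA}), and both establish the injectivity-of-evaluation-at-degree-zero step in equivalent ways — via projectivity in the paper, via generation in yours.

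The one point that needs tightening is your citation for the generation step. Lemma~\ref{zerolevelgeneratedleftmodule} asserts $\gl_V[t]$-generation of $S^n(V\otimes U\otimes\bK[t])$ by its degree-zero part, and the weight-projection argument you sketch transfers this to $\gl_V[t]$-generation of $V[t]^{\otimes n}$ by $V^{\otimes n}$. But what you need for the injectivity of ${\rm res}$ on $\mathrm{End}_{\msl_V[t]}(V[t]^{\otimes n})$ is the stronger statement of $\msl_V[t]$-generation, which is not literally what that lemma delivers (its proof explicitly uses the $\gl$-Cartan generators $h_{ii}\otimes t^k$). The statement you need is nonetheless true, and the cleanest in-text reference is Corollary~\ref{cor::proj_gen}: the decomposition into projective covers $P_\lambda^{\leq n\omega_1}$, each of which is $\msl_V[t]$-generated by its degree-zero $V_\lambda$, gives $\msl_V[t]$-generation of $V[t]^{\otimes n}$ by $V^{\otimes n}$ directly. (Alternatively one can upgrade Lemma~\ref{zerolevelgeneratedleftmodule} by observing that the action of ${\rm Id}\cdot t^k$ on $V^{\otimes n}$ lands in the $\msl_V[t]$-span, using $m>n$.) With that correction in place, your argument is complete and valid.
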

\begin{proof}	
We omit the straightforward computations showing that, first, for all monomials $m$ the correspondence
$\psi(m)$ is an automorphism of the projective module $V[t]^{\otimes n}$ and, second, that $\psi$ is a map of algebras.
The nontrivial part explained below is to show that $\psi$ is an isomorphism.

Each morphism $\varphi$ from a projective cover $P_{\lambda}$ of an irreducible $\gl_V[t]$-module $V_{\lambda}$ to a given module $M$ is uniquely defined by the $\msl_V$-equivariant morphism $\varphi:V_{\lambda}\to M$. In particular, it is defined by the image of the highest weight vector $v_\lambda\in V_{\lambda}$.
Therefore, each endomorphism $\varphi\in  {\rm End}_{\gl_V[t]}(V[t]^{\otimes n})$ has a unique decomposition
$\sum_{\lambda\vdash n} \varphi_{\lambda}$, where $\varphi_{\lambda}(v_\mu) =0$ whenever $\mu\neq\lambda$ and $\varphi_\lambda$ is
defined by the image of the highest weight
vectors of $V^{\otimes n}$ of weight $\lambda$.
Consider the decomposition
\[{\rm End}_{\GL(V)}(V^{\otimes n}) \simeq \bC[\mathfrak{S_n}] = \oplus_{\lambda\vdash n} {\rm Mat}_{\dim \mathbb{S}_{\lambda}}(\bK).\]
Let $\{v_{T}\}$ be the basis of the space of highest weight vectors in $V^{\otimes n}$ of weight $\lambda\vdash n$ which is
known to be isomorphic to the Specht module $\mathbb{S}_{\lambda}$ and is indexed by standard Young tableaux of shape $\lambda$.
Thanks to the $\msl_V$-equivariance we know that the element $\varphi_{\lambda}(v_T)$ can be presented as the sum
\[\sum_{T'\vdash \lambda} v_{T'} f_{T'}(t_1,\ldots,t_n)\]
for appropriate polynomials $f_T'(t_1,\ldots,t_n)\in\bK[t_1,\ldots,t_n]$ and hence, $\varphi_{\lambda}$ uniquely defines an element
$\bar{\varphi}_{\lambda}\in {\rm Mat}_{\dim \mathbb{S}_{\lambda}}(\bC[t_1,\ldots,t_n])$.
The isomorphism of graded vector spaces
\[
\bC[\mathfrak{S_n}]\ltimes \bK[t_1,\ldots,t_n] = \oplus_{\lambda\vdash n} {\rm Mat}_{\dim \mathbb{S}_{\lambda}}(\bC[t_1,\ldots,t_n])
\]
finishes the proof of the proposition.
\end{proof}

\subsection{Applications}
Now we are able to reprove and specify certain results from \cite{Kat2} and \cite{BKM}:
\begin{cor}
	The category of graded representations of the degenerate affine Hecke algebra $\bK[\mathfrak{S_n}]\ltimes \bK[t_1,\ldots,t_n] $ is a highest weight category in the sence of Section~\S\ref{sec::HWC}.
\end{cor}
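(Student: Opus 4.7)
The plan is simply to transport the highest weight structure along the equivalence of categories established in Theorem~\ref{thm::SW::currents}. Recall that for $m>n$, Proposition~\ref{prp::GL::HWC} identifies $\RepGLt{V}{n}$ with the Serre subcategory $\Rep(\msl_m[t])^{\leq n\omega_1}$, and by Example~\ref{ex::SL::HW} together with Corollary~\ref{cor::HW::leq} this subcategory is a highest weight category whose standard objects are the global Weyl modules $\bW_\lambda$ ($\lambda\vdash n$), with proper standard objects equal to the local Weyl modules $W_\lambda$.

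The first step would be to observe that being a highest weight category is a purely categorical property: among the equivalent conditions (S1)--(S4) recollected in Section~\ref{sec::HWC}, the most convenient is (S4), which asserts the vanishing of $\mathrm{Ext}^{2}_{\cat}(\Delta(\lambda),\bar\nabla(\mu))$ for all $\lambda,\mu$. This is formulated entirely in terms of the abelian-category structure, the partially ordered set of simples, and Ext groups, all of which are preserved by any equivalence of abelian categories.

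The second step is to identify explicitly the images of the distinguished objects under the equivalence~\eqref{eq::SW::currents}. The simples $V_\lambda$ on the $\msl_m[t]$ side correspond to the irreducible $\mathfrak{S}_n$-representations $\mathbb{S}_\lambda$ regarded as $\bK[\mathfrak{S}_n]\ltimes \bK[t_1,\ldots,t_n]$-modules on which $\bK[t_1,\ldots,t_n]$ acts by zero, indexed by the same poset of partitions $\lambda\vdash n$ (enriched by the grading shift, exactly as in Example~\ref{ex::SL::HW}). The global Weyl modules $\bW_\lambda$ correspond to the global Kato modules $\mathbb{K}_\lambda$ from Theorem~\textbf{C}, and the local Weyl modules $W_\lambda$ correspond to the local Kato modules; the endomorphism algebras $\A_\lambda$ are preserved since the equivalence is $\bK$-linear.

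Finally, I would verify that the finiteness hypothesis on the poset (as in the footnote of Section~\ref{sec::HWC}) is satisfied: since the partitions of $n$ form a finite set, each subposet $\Lambda^{\leq\lambda}$ is automatically finite, so the framework of~\cite{CPS} applies verbatim. There is no real obstacle here; the whole content of the corollary lies in the equivalence~\eqref{eq::SW::currents} already established in Theorem~\ref{thm::SW::currents}, and the BGG reciprocity (Corollary~\ref{cor::BGG}) transfers automatically to the degenerate affine Hecke side.
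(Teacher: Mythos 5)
Your proposal is correct and matches the paper's (implicit) argument: the corollary is stated immediately after Theorem~\ref{thm::SW::currents} precisely because the highest weight structure on $\Rep(\msl_m[t])^{\leq n\omega_1}$ (from Example~\ref{ex::SL::HW} and Corollary~\ref{cor::HW::leq}) transports across the Morita equivalence~\eqref{eq::SW::currents}, with the categorical conditions (S1)--(S4) being invariant under equivalence of abelian categories. Your explicit identification of simples, standards, and proper standards with the $\mathbb{S}_\lambda$'s, global Kato modules $\mathbb{K}_\lambda$, and local Kato modules $K_\lambda$ is exactly what the paper records right after the corollary.
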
	
We denote the corresponding standard modules $\Delta(\lambda)$ by \emph{global Kato modules} $\mathbb{K}_{\lambda}$ and the corresponding proper standard modules $\bar{\Delta}(\lambda)$ by \emph{local Kato modules} $K_{\lambda}$. The Schur-Weyl duality~\eqref{eq::SW::currents} maps the global/local Weyl module $\bW_\lambda$/$W_\lambda$ to the global/local Kato module $\mathbb{K}_{\lambda}$/$K_\lambda$ and we end up with the following corollary of the equivalence of categories.
\begin{cor}
$\phantom{1}$	
\begin{enumerate}
	\item
	\label{item::A_lambda}
	The algebra of endomorphisms of the global Kato module $\mathbb{K}_{\lambda}$ is isomorphic to $\A_{\lambda}$.
	\item\label{item::free}
	 $\mathbb{K}_{\lambda}$ is a free $\A_{\lambda}$-module with the graded set of generators isomorphic to $K_\lambda$.
	\item
	\label{item::Ch::K}
	The graded $\mathfrak{S}_n$-character of $K_\lambda$ is equal to the $q$-Whittaker function $p_{\lambda}(x,q)$.
\end{enumerate}
\end{cor}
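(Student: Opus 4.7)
The plan is to derive all three items as formal consequences of the equivalence of categories~\eqref{eq::SW::currents} established in Theorem~\ref{thm::SW::currents}. The key point is that under this Schur-Weyl equivalence, the global Weyl module $\bW_\lambda$ corresponds to the global Kato module $\mathbb{K}_\lambda$, and the local Weyl module $W_\lambda$ (the proper standard object in $\Rep(\msl_m[t])^{\leq n\omega_1}$) corresponds to the local Kato module $K_\lambda$. Since any equivalence of abelian categories preserves endomorphism algebras, the action of endomorphism algebras on the corresponding objects, short exact sequences, and hence freeness, most of the corollary follows essentially automatically.

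More precisely, for~\eqref{item::A_lambda} recall from Section~\ref{WeylModules} that $\A_\lambda = \mathrm{End}_{\msl_m[t]}(\bW_\lambda)$; the equivalence therefore gives an isomorphism
\[
\mathrm{End}_{\bK[\mathfrak{S}_n]\ltimes\bK[t_1,\ldots,t_n]}(\mathbb{K}_\lambda) \simeq \A_\lambda.
\]
For~\eqref{item::free}, recall that $\bW_\lambda$ is free as a (right) $\A_\lambda$-module with quotient $\bW_\lambda/\A_\lambda^+\simeq W_\lambda$. The right $\A_\lambda$-action on $\bW_\lambda$ is by definition the action of its endomorphism algebra, so the equivalence identifies it with the natural $\A_\lambda$-action on $\mathbb{K}_\lambda$ arising from~\eqref{item::A_lambda}. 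A choice of $\bC$-linear splitting of $\bW_\lambda\twoheadrightarrow W_\lambda$ gives an $\A_\lambda$-equivariant isomorphism $\bW_\lambda\simeq W_\lambda\otimes_{\bC}\A_\lambda$; applying the (exact, $\A_\lambda$-equivariant) Schur-Weyl functor yields $\mathbb{K}_\lambda\simeq K_\lambda\otimes_{\bC}\A_\lambda$, establishing both the freeness and the identification of the generators with $K_\lambda$.

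For~\eqref{item::Ch::K}, recall from the beginning of Section~\ref{TypeA} that the graded $\msl_m$-character of $W_\lambda$ equals the $q$-Whittaker function $p_\lambda(x_1,\ldots,x_m;q)$. The Schur-Weyl functor extracts the weight $(1^n,0^{m-n})$-subspace, under which each irreducible $V_\mu$ with $\mu\vdash n$ restricts to the Specht module $\mathbb{S}_\mu$; consequently the graded multiplicities agree, $[W_\lambda:V_\mu]_q = [K_\lambda:\mathbb{S}_\mu]_q$. Applying the Frobenius characteristic map to the graded $\mathfrak{S}_n$-character of $K_\lambda$ yields
\[
\sum_{\mu\vdash n} [K_\lambda:\mathbb{S}_\mu]_q \, s_\mu = \sum_{\mu\vdash n} [W_\lambda:V_\mu]_q \, s_\mu = \ch_q W_\lambda = p_\lambda(x;q),
\]
as claimed.

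The main (if minor) obstacle is the bookkeeping in~\eqref{item::free}: one must verify that the right $\A_\lambda$-action on $\mathbb{K}_\lambda$ transported across the equivalence coincides with the intrinsic action coming from its own endomorphism algebra. This is unambiguous because on both sides the action is, by construction, the action of the endomorphism algebra of the respective standard object, which is identified with $\A_\lambda$ by~\eqref{item::A_lambda}. Once this identification is in place, all three items reduce to known structural and character-theoretic properties of global and local Weyl modules recalled in Section~\ref{WeylModules} and Section~\ref{TypeA}.
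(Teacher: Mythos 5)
Your proposal is correct and follows essentially the same route as the paper: all three items are extracted from the Morita equivalence of Theorem~\ref{thm::SW::currents}, using the facts that an equivalence of abelian categories preserves endomorphism algebras and their actions (items (i) and (ii)) and that the Schur-Weyl correspondence matches graded characters via the Frobenius characteristic / extraction of the $(1^n,0^{m-n})$-weight space (item (iii)). The paper's own proof is terser but invokes the identical mechanism; your added spelling-out of the $\A_\lambda$-freeness transport via a $\bC$-linear splitting and exactness of the functor is a reasonable elaboration of what the paper leaves implicit.
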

\begin{proof}
Note that our equivalence of categories preserves the graded characters of Schur-Weyl dual modules.
Now, items~\eqref{item::A_lambda}, \eqref{item::free} follow from the fact that the algebras of endomorphism of a module and its
Morita dual module are isomorphic in Morita equivalent categories.
	Item~\eqref{item::Ch::K} follows from the fact that symmetric functions assigned to Schur-Weyl dual representations are the same and consequently the equivalence of categories we consider preserves the graded characters of Schur-Weyl dual modules.
\end{proof}	
The standard substitution $q=1$ in a $q$-Whittaker function $p_{\lambda}(x;q)|_{q=1} = e_{\lambda}$
implies the following
\begin{cor}
\begin{itemize}	
\item One has an isomorphism of $\mathfrak{S}_n$-modules  $K_\lambda$ and the induced module
${\rm Ind}_{\mathfrak{S}_{\lambda^{t}}}^{\mathfrak{S}_n} Sgn$. Here the subgroup
$\mathfrak{S}_{\lambda^{t}}=\mathfrak{S}_{\lambda_1^{t}}\times\mathfrak{S}_{\lambda_2^{t}} \times \ldots$ is the subgroup permuting the columns of the Young tableaux of shape $\lambda$.
\item The dimension of $K_{\lambda}$ is equal to the multinomial coefficient $\binom{|\lambda|}{\lambda_1^{t}, \dots, \lambda_n^{t}}$.
\end{itemize}	
\end{cor}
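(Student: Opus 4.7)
The plan is to apply the graded $\mathfrak{S}_n$-character identification from the previous corollary and specialize $q=1$, reducing the claim to a classical statement in the theory of symmetric functions. By the previous corollary the graded Frobenius characteristic of $K_\lambda$ equals the $q$-Whittaker polynomial $p_\lambda(x;q)$, so setting $q=1$ yields the ungraded Frobenius characteristic of $K_\lambda$, which by the displayed formula in the preamble equals the product of elementary symmetric functions $e_\lambda = e_{\lambda^t_1} e_{\lambda^t_2}\cdots$ indexed by the column lengths of the Young diagram $\lambda$ (this is the paper's convention for $e_\lambda$, consistent with the degeneration at $q=1$ of the local Weyl module $W_\lambda$ to the tensor product $\bigotimes_j \Lambda^{\lambda^t_j} V$ of fundamental $\mathfrak{gl}$-modules).

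Next I would invoke two standard facts: the sign representation of $\mathfrak{S}_k$ has Frobenius characteristic $e_k$, and Frobenius characteristic intertwines induction from a Young subgroup with multiplication of symmetric functions. Together these give
\[
\mathrm{ch}\!\left({\rm Ind}_{\mathfrak{S}_{\lambda^{t}}}^{\mathfrak{S}_n} \mathrm{Sgn}\right) \;=\; \prod_i e_{\lambda^t_i} \;=\; p_\lambda(x;1) \;=\; \mathrm{ch}(K_\lambda).
\]
Since the Frobenius characteristic is an isomorphism from the complexified Grothendieck group of $\mathfrak{S}_n$-modules to the space of symmetric functions of degree $n$, it separates isomorphism classes, producing the desired identification $K_\lambda \simeq {\rm Ind}_{\mathfrak{S}_{\lambda^{t}}}^{\mathfrak{S}_n} \mathrm{Sgn}$ of the first bullet.

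The second bullet then follows from the standard dimension count for an induced module:
\[
\dim K_\lambda \;=\; [\mathfrak{S}_n : \mathfrak{S}_{\lambda^{t}}] \cdot \dim \mathrm{Sgn} \;=\; \frac{n!}{\prod_i (\lambda^t_i)!} \;=\; \binom{|\lambda|}{\lambda^t_1, \lambda^t_2, \ldots, \lambda^t_n}.
\]
I do not anticipate a substantial obstacle: the hard work, namely the identification of the graded $\mathfrak{S}_n$-character of $K_\lambda$ with the $q$-Whittaker polynomial via the current Schur-Weyl equivalence, has already been carried out in the preceding corollary, and what remains is a routine application of the classical symmetric function dictionary between induced modules from Young subgroups and products of elementary symmetric functions.
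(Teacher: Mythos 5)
Your argument is exactly the one the paper intends: it specializes the preceding corollary's identification $\mathrm{ch}_q\,K_\lambda = p_\lambda(x;q)$ at $q=1$ to obtain $e_{\lambda^t_1}e_{\lambda^t_2}\cdots$, and then uses the standard symmetric-function dictionary (Frobenius characteristic sends the sign representation of $\mathfrak{S}_k$ to $e_k$ and intertwines induction from a Young subgroup with multiplication) to read this off as ${\rm Ind}_{\mathfrak{S}_{\lambda^{t}}}^{\mathfrak{S}_n}\mathrm{Sgn}$, with the dimension count following immediately. The paper states this in a single terse line before the corollary; you have simply spelled out the same chain of reasoning in full.
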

%\begin{proof}	
%	The nongraded character ($q=1$) of the local Weyl module is equal to the character of the tensor product of evaluation modules and is equal to $e_\lambda = e_{\lambda_1^t} \ldots e_{\lambda_n^t}$ (see e.g~\cite{CL}). The latter is equal to the $\mathfrak{S}_n$-character of the aformentioned induced representation.
%\end{proof}	

Let us finish this Section with the following Theorem \ref{Schur-Weylcurrent} that can be considered as a natural current version of the Schur-Weyl duality:
\begin{thm}\label{Schur-Weylcurrent}
For each $n \in \mathbb{N}$ there exists a decreasing filtration $\{F_{\lambda}| \lambda\vdash n, l(\lambda)\le n\}$ of
$\gl_V[t]\de\bC[\mathfrak{S}_n] \ltimes \bC[t_1, \dots, z_t]$-bimodule
$V[t]^{\otimes n}$ such that:
\[
{\rm gr} V[t]^{\otimes n}\simeq \bigoplus_{\lambda\vdash n, l(\lambda)\le n}\mathbb{W}_{\lambda} \otimes_{\A_{\lambda}}\mathbb{K}_{\lambda}.
%	 \begin{cases}	\mathbb{W}_{\lambda} \otimes_{\A_{\lambda}}\mathbb{K}_{\lambda}, \text{ if } l(\lambda)\leq \dim V, \\ 0, \text{ otherwise. }\end{cases}
	\]
\end{thm}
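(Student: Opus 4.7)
The plan is to deduce Theorem~\ref{Schur-Weylcurrent} from the Howe-duality filtration of Theorem~\ref{HoweCurrent} by restricting to an appropriate weight subspace, in precise analogy with the way classical Schur--Weyl duality is recovered from classical Howe duality in Example~\ref{ex::Howe::SW}. Since the associated graded of the Howe filtration is already $\bW_\lambda(V)\otimes_{\A_\lambda}\bW_\lambda^o(U)$, the desired bimodules will appear almost for free once the right weight subspace is extracted.

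Fix an auxiliary vector space $U$ with $\dim U>n$, and recall from Theorem~\ref{HoweCurrent} the decreasing filtration $F^{\mathrm{Ho}}_\lambda$ of the $\gl_V[t]\de\gl_U[t]$-bimodule $S^n(V\otimes U\otimes\bC[t])$ with successive quotients $F^{\mathrm{Ho}}_\lambda/F^{\mathrm{Ho}}_{>\lambda}\simeq \bW_\lambda(V)\otimes_{\A_\lambda}\bW_\lambda^o(U)$. Consider the weight subspace for $\gl_U\otimes 1$ of weight $(1^n,0^{\dim U-n})$. As in~\eqref{eq::Howe::SW}, this subspace is canonically identified with $V[t]^{\otimes n}$ as a left $\gl_V[t]$-module, and it is preserved by the right action of the subquotient $\mathfrak{S}_n\ltimes T_n[[t]]/T_n$ of the normalizer $N(T_{\dim U})$ that appears in the proof of Theorem~\ref{thm::SW::currents}; this action equips $V[t]^{\otimes n}$ with its right $\bC[\mathfrak{S}_n]\ltimes\bC[t_1,\dots,t_n]$-module structure. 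Define $F_\lambda$ as the intersection of $F^{\mathrm{Ho}}_\lambda$ with this weight subspace. Because passage to a $\gl_U$-weight subspace is exact and the $\A_\lambda$-action commutes with $\gl_U[t]$ and hence preserves the $\gl_U$-weight decomposition,
\[
F_\lambda/F_{>\lambda}\simeq \bW_\lambda(V)\otimes_{\A_\lambda}\bigl(\bW_\lambda^o(U)\bigr)_{(1^n,0^{\dim U-n})}.
\]

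It therefore suffices to identify $(\bW_\lambda^o(U))_{(1^n,0^{\dim U-n})}$ with the global Kato module $\mathbb{K}_\lambda$. Since $\dim U>n$, the Schur--Weyl equivalence of Theorem~\ref{thm::SW::currents} applies to the $\gl_U[t]$-action, and as an equivalence of highest weight categories it sends standard modules to standard modules; thus the standard $\bW_\lambda^o(U)$ is carried to the standard $\mathbb{K}_\lambda$. The main obstacle is the verification that this equivalence really does intertwine the $\mathfrak{S}_n\ltimes T_n[[t]]/T_n$-action on the weight subspace of $\bW_\lambda^o(U)$ with the tautological $\bC[\mathfrak{S}_n]\ltimes\bC[t_1,\dots,t_n]$-action on $\mathbb{K}_\lambda$. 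This is essentially bookkeeping: one traces through the explicit formula~\eqref{eq::morph::proj} in Proposition~\ref{prp::automoprhismalgebra} and accounts for the left-right conventions of Lemma~\ref{Wopp}. Once this compatibility is in place the identification is automatic and the theorem follows.
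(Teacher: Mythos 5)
Your proof is correct and follows essentially the same strategy as the paper: apply the Howe filtration from Theorem~\ref{HoweCurrent} to $S^n(V\otimes U\otimes\bC[t])$ for an auxiliary $U$ with $\dim U>n$, then pass through the current Schur--Weyl equivalence of Theorem~\ref{thm::SW::currents} on the $U$-side to convert $\bW_\lambda^o(U)$ into $\mathbb{K}_\lambda$. Your version is a bit more explicit than the paper's (you spell out the exactness of the $\gl_U$-weight-subspace functor and its compatibility with $\A_\lambda$), but the underlying argument is the same.
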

\begin{proof}
Consider an auxiliary vector space $U$ with  $\dim U>n$.
Generalizing the classical equivariant Schur-Weyl duality between Howe module and tensor power mentioned in~\eqref{eq::Howe::SW} we conclude that the $\gl_V[t]$-equivariant Schur-Weyl duality defines the correspondence between
$\gl_V[t]\de\bK[\mathfrak{S}_n] \ltimes \bK[t_1, \dots, t_n]$ and $\gl_V[t]\de\gl_U[t]$-bimodules: 	
\[
V[t]^{\otimes n}\leftrightarrow S^n(V\otimes U\otimes\bC[t]).
\]
Theorem~\ref{HoweCurrent} explains the existence of the desired filtration for the  $\gl_V[t]\de\gl_U[t]$-bimodule with successive
subquotients isomorphic to $\bW_\la(V)\otimes_{\A_{\lambda}}\bW_{\la}^{o}(U)$. The equivalence of categories obtained in
Theorem~\ref{Schur-Weylcurrent} implies the existence of corresponding filtration for
$\gl_V[t]\de\bK[\mathfrak{S}_n] \ltimes \bK[t_1, \dots, t_n]$-bimodule where the global Weyl module $\bW_\lambda(U)$ is replaced
by the global Kato module $\mathbb{K}_{\lambda}$.
% Note that the Specht modules $\mathbb{S}_\lambda$ are selfdual and therefore we do need to take opposite for the Kato modules as it is in the case of Howe duality.
\end{proof}

\subsubsection{Combinatorics of local Kato modules}
\label{sec::Kato::fillings}
In this subsection we derive a combinatorial formula for the characters of Kato modules using combinatorics of Weyl modules
described in \cite{FeMa2}.
\begin{cor}
\label{cor::local::Kato::fillings}
	The basis of the local Kato module $K_{\lambda}$ is indexed by the coloumn-decreasing fillings (without repetitions) of a Young diagram $\lambda$ by numbers $1,\dots,n$. The $q$-degree of a filling is defined in~\cite{FeMa2} (Definition 2.6) and is recalled below in Definition~\ref{def::snake::rule}.
\end{cor}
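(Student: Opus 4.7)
The strategy is to transport a known basis of the local Weyl module $W_{\lambda}$ to the local Kato module $K_{\lambda}$ via the Schur--Weyl equivalence of Theorem~\ref{thm::SW::currents}. Recall that the equivalence sends $W_{\lambda}(V)\in\Rep(\msl_V[t])^{\leq n\omega_1}$ to $K_{\lambda}\in\mathsf{Rep}(\bK[\mathfrak{S}_n]\ltimes\bK[t_1,\ldots,t_n])$ by extracting the weight subspace $W_{\lambda}(V)_{(1^n,0^{m-n})}$, and that this functor preserves the graded characters of Schur--Weyl dual modules.

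First, I would recall from \cite{FeMa2} the monomial basis of $W_{\lambda}(V)$ (for $\mgl_m$ with $m>n$): it is indexed by admissible fillings $T$ of the Young diagram $\lambda$ by the alphabet $\{1,\dots,m\}$, together with the $q$-degree statistic of Definition~\ref{def::snake::rule} (recalled just below this corollary in the paper). Under the weight grading, a filling $T$ contributes to the weight subspace $(1^n,0^{m-n})$ exactly when each of the numbers $1,\dots,n$ appears exactly once in $T$ and no larger numbers appear; these are precisely fillings of $\lambda$ (which, since $|\lambda|=n$, must have $n$ boxes) by $1,\dots,n$ without repetitions. The additional admissibility constraint of the basis from \cite{FeMa2} specializes in this situation to the column-decreasing condition stated in the corollary.

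Next, I would apply the functor $U\mapsto U_{(1^n,0^{m-n})}$ of~\eqref{eq::SW::currents} to this basis. Since Schur--Weyl equivalence sends $W_{\lambda}$ to $K_{\lambda}$ (by the discussion right after Theorem~\ref{thm::SW::currents}) and restricts to an isomorphism between the weight-$(1^n,0^{m-n})$ component of $W_{\lambda}(V)$ and $K_{\lambda}$ (indeed, the functor is simply this weight projection), the image of the $(1^n,0^{m-n})$-part of the chosen basis of $W_{\lambda}$ is a basis of $K_{\lambda}$, indexed by the column-decreasing repetition-free fillings. The $q$-grading is preserved because both the Schur--Weyl functor and the basis statistic are defined on the level of graded vector spaces.

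Finally, I would cross-check with characters: item~\eqref{item::Ch::K} of the previous corollary states that the graded $\mathfrak{S}_n$-character of $K_{\lambda}$ equals $p_{\lambda}(x,q)$, and this matches the generating function of the stated fillings weighted by the snake-rule $q$-degree (this is precisely the combinatorial formula from \cite{FeMa2} for $W_{\lambda}$ restricted to the weight subspace $(1^n,0^{m-n})$). The main potential obstacle is verifying that the admissibility condition from \cite{FeMa2} indeed collapses to plain column-strictness once we restrict to fillings by $\{1,\dots,n\}$ without repetitions; this is the one place where a detailed inspection of the snake rule is required rather than a formal appeal to Schur--Weyl duality.
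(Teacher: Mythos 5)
Your proposal is correct and follows essentially the same route as the paper, which condenses the argument into a single sentence: restrict the graded basis of the local Weyl module $W_{\lambda}$ from \cite{FeMa2} to the weight-$(1^n,0^{m-n})$ subspace. You spell out the intermediate steps (identifying the Schur--Weyl functor with that weight projection, observing that fillings of this weight are exactly bijective fillings by $\{1,\dots,n\}$, and cross-checking against the character identity $\ch_q K_\lambda = p_\lambda(x,q)$), and you correctly flag the only point deserving verification — that the \cite{FeMa2} admissibility condition specializes to plain column-decreasing for repetition-free fillings — but this is precisely the paper's implicit content, so the two proofs agree.
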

\begin{proof}
 The only thing one has to do is to  get a description of the $(1^{n},0^{m-n})$-weight subspace of a local Weyl module $W_{\lambda}$ whose graded basis was constructed in~\cite{FeMa2}.
\end{proof}
\begin{definition}
\label{def::snake::rule}	
The degree of a column-decreasing filling $f$ of a Young diagram $\lambda\vdash n$ by numbers $1,\dots,n$ is the following sum
\[
\sum_{1\leq i<j \leq l(\lambda^t)} k(f(\lambda_i^t),f(\lambda_j^t)),
\]
where $f(\lambda_i^t)$ is the filling of the $i$-th column and the number $k(\sigma,\tau)$ of two filled columns  $\sigma=(\sigma_1> \dots> \sigma_l)$, $\tau=(\tau_1>\dots>\tau_r)$ computes the number of sign changes in the row inequalities written in the reversed order:
\[
\sigma_l \bigvee \tau_l; \ \sigma_{l-1} \bigvee \tau_{l-1};\ldots ; \sigma_1\bigvee \tau_1.
\]
If the length of $\sigma$ is greater than the length of $\tau$ then we assume that $\sigma_l < \emptyset $.
Moreover, if $\sigma_l>\tau_l$, then we say that the sign does change at the $l$-th position.
\end{definition}
Let us provide an example of computations.
\begin{example}
	\label{snakeexample}
The numbers $k(\sigma,\tau)$ are written under the pairs of columns $\sigma$ and $\tau$ and the sign changing inequalities are circled:
\[
\begin{array}{cccccc}
&{	\begin{tikzpicture}[scale=0.7]
	\node (v00) at (-.5,2.5) { {{1}}};
	\node (v10) at  (-.5,1.5) {{2}};
	\node (v20) at (-.5,0.5) {{8}};
	\node (v30) at (-.5,-.5) {{10}};
	\node (v01) at (0.5,2.5) {$<$};
	\node (v11) at  (.5,1.5) {{$<$}};
	\node (v21) at (.5,0.5) {$<$};
	\node (v31) at (.5,-.5) {$<$};
	\draw[step=1cm] (-1,-1) grid (0,3);
	\node (v02) at (1.5,2.5) {{4}};
	\node (v12) at  (1.5,1.5) {{5}};
	\node (v22) at (1.5,0.5) {{9}};
	\node (v32) at (1.5,-.5) {};
	\draw[step=1cm] (1,0) grid (2,3);
	\end{tikzpicture}
} &	\quad;\quad &
{	\begin{tikzpicture}[scale=0.7]
	\node (v00) at (-.5,2.5) { {{1}}};
\node (v10) at  (-.5,1.5) {{2}};
\node (v20) at (-.5,0.5) {{8}};
\node (v30) at (-.5,-.5) {{10}};
	\node (v01) at (0.5,2.5) {$<$};
	\node (v11) at  (.5,1.5) {\textcircled{$<$}};
	\node (v21) at (.5,0.5) {\textcircled{$>$}};
	\node (v31) at (.5,-.5) {$<$};
	\draw[step=1cm] (-1,-1) grid (0,3);	
	\node (v02) at (1.5,2.5) {{3}};
	\node (v12) at  (1.5,1.5) {{6}};
	\node (v22) at (1.5,0.5) {{7}};
	\node (v32) at (1.5,-.5) {};
	\draw[step=1cm] (1,0) grid (2,3);
	\end{tikzpicture}
} &	\quad;\quad &
{		\begin{tikzpicture}[scale=0.7]
		\node (v00) at (-.5,2.5) { {{4}}};
		\node (v10) at  (-.5,1.5) {{5}};
		\node (v20) at (-.5,0.5) {{9}};
%		\node (v30) at (-.5,-.5) {\bf{5}};
		\node (v01) at (0.5,2.5) {\textcircled{$>$}};
		\node (v11) at  (.5,1.5) {\textcircled{$<$}};
		\node (v21) at (.5,0.5) {\textcircled{$>$}};
%		\node (v31) at (.5,-.5) {$<$};
		\draw[step=1cm] (-1,0) grid (0,3);
		\node (v02) at (1.5,2.5) {{3}};
		\node (v12) at  (1.5,1.5) {{6}};
		\node (v22) at (1.5,0.5) {{7}};
%		\node (v32) at (1.5,-.5) {};
		\draw[step=1cm] (1,0) grid (2,3);
		\end{tikzpicture}
}\\
k(\sigma,\tau) =  & 0 & & 2 & & 3
\end{array}
\]
We also provide an example of a computation of the $q$-degree of the following filling of the Young diagram $\lambda=(3,3,3,1)= (4,3,3)^t$:
\[
\begin{tikzpicture}[scale=0.5]
\draw[-] (-.3,3-1) edge[bend right=20] (-.3,-1-1);
\node (k) at (-2,1-1) {\large $\deg_q$};
	\draw[step=1cm] (0,-1) grid (3,2);
	\draw[step=1cm] (0,-2) grid (1,-1);
\node (v00) at (.5,2.5-1) { {{1}}};
\node (v10) at  (.5,1.5-1) {{2}};
\node (v20) at (.5,0.5-1) {{8}};
\node (v30) at (.5,-.5-1) {{10}};
\node (v01) at (1.5,2.5-1) { {{4}}};
\node (v11) at  (1.5,1.5-1) {{5}};
\node (v21) at (1.5,0.5-1) {{9}};
\node (v02) at (2.5,2.5-1) {{3}};
\node (v12) at  (2.5,1.5-1) {{6}};
\node (v22) at (2.5,0.5-1) {{7}};
\draw[-] (3.3,3-1) edge[bend left=20] (3.3,-1-1);
\node (p) at (4.5,0) {\Large {$=$}};
\end{tikzpicture}
\begin{tikzpicture}[scale=0.5]
\draw[step=1cm] (0,-1) grid (1,3);
\node (v00) at (.5,2.5) { {{1}}};
\node (v10) at  (.5,1.5) {{2}};
\node (v20) at (.5,0.5) {{8}};
\node (v30) at (.5,-.5) {{10}};
\draw[-] (-.3,3) edge[bend right=20] (-.3,-1);
\node (k) at (-1.2,1) {\large $k$};
\node (p) at (1.2,1) {\large{,}};
\end{tikzpicture}
\begin{tikzpicture}[scale=0.5]
\draw[step=1cm] (0,0) grid (1,3);
\node (v01) at (.5,2.5) { {{4}}};
\node (v11) at  (.5,1.5) {{5}};
\node (v21) at (.5,0.5) {{9}};
\node (v31) at (.5,-.75) {};
\draw[-] (1.3,3) edge[bend left=20] (1.3,-1);
\node (p) at (2.5,1) {\large {$+$}};
\end{tikzpicture}
\begin{tikzpicture}[scale=0.5]
\draw[-] (-.3,3) edge[bend right=20] (-.3,-1);
\node (k) at (-1.2,1) {\large $k$};
\draw[step=1cm] (0,-1) grid (1,3);
\node (v00) at (.5,2.5) { {{1}}};
\node (v10) at  (.5,1.5) {{2}};
\node (v20) at (.5,0.5) {{8}};
\node (v30) at (.5,-.5) {{10}};
\node (p) at (1.2,1) {\large{,}};
\end{tikzpicture}
\begin{tikzpicture}[scale=0.5]
\draw[step=1cm] (0,0) grid (1,3);
\node (v01) at (.5,2.5) { {{3}}};
\node (v11) at  (.5,1.5) {{6}};
\node (v21) at (.5,0.5) {{7}};
\node (v31) at (.5,-.75) {};
\draw[-] (1.3,3) edge[bend left=20] (1.3,-1);
\node (p) at (2.5,1) {\large {$+$}};
\end{tikzpicture}
\begin{tikzpicture}[scale=0.5]
\draw[-] (-.3,3) edge[bend right=20] (-.3,-1);
\node (k) at (-1.2,1) {\large $k$};
\draw[step=1cm] (0,0) grid (1,3);
\node (v01) at (.5,2.5) { {{4}}};
\node (v11) at  (.5,1.5) {{5}};
\node (v21) at (.5,0.5) {{9}};
\node (p) at (1.2,1) {\large{,}};
\end{tikzpicture}
\begin{tikzpicture}[scale=0.5]
\draw[step=1cm] (0,0) grid (1,3);
\node (v01) at (.5,2.5) { {{3}}};
\node (v11) at  (.5,1.5) {{6}};
\node (v21) at (.5,0.5) {{7}};
\draw[-] (1.3,3) edge[bend left=20] (1.3,-1);
\node (p) at (2.5,1) {\Large {$=$}};
\node (p0) at (5.5,1) {{$0+2+3 = 5$}};
\end{tikzpicture}
%= 0+2+3 = 5
\]
\end{example}
The combinatorial basis of the local Kato module described in Corollary~\ref{cor::local::Kato::fillings} can be used,
for example, in order to prove the following
\begin{lem}
\label{lem::K_la::top}	
\begin{equation}
\label{eq::deg:q:K_la}
\dim_q K_{\la} = q^{\sum_i \binom{\lambda_i}{2}}+ \text{ lower terms }. \end{equation}
\end{lem}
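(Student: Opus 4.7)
The plan is to work directly with the combinatorial basis of $K_{\lambda}$ from Corollary~\ref{cor::local::Kato::fillings}, whose $q$-degree is given by the snake rule (Definition~\ref{def::snake::rule}). I would prove the two statements separately: every filling $f$ satisfies $\deg_q f\le\sum_i\binom{\lambda_i}{2}$, and exactly one filling achieves equality.

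For the upper bound, I first bound the pairwise contributions. For two columns $\sigma,\tau$ with $l(\sigma)\ge l(\tau)$, the snake rule forces the $l(\sigma)-l(\tau)$ padded positions to carry the sign ``$<$'' by the convention $\sigma_l<\emptyset$; these contribute no sign changes among themselves and no initial ``$>$''. Among the $l(\tau)$ genuine comparisons there are at most $l(\tau)-1$ internal sign changes, plus at most one extra change at the padding boundary, so $k(\sigma,\tau)\le l(\tau)$. Summing over all pairs of columns yields $\sum_{i<j}k(\mathrm{Col}_i,\mathrm{Col}_j)\le\sum_{i<j}\lambda^t_j=\sum_j(j-1)\lambda^t_j$, and swapping the order of summation gives $\sum_j(j-1)\lambda^t_j=\sum_r\sum_{j=1}^{\lambda_r}(j-1)=\sum_r\binom{\lambda_r}{2}$.

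For uniqueness, equality $k(\sigma,\tau)=l(\tau)$ forces the comparisons at rows $r\le l(\tau)$ to alternate between ``$>$'' and ``$<$'', pinned by the sign at the bottom row $r=l(\tau)$ being ``$>$''. Translated entrywise, for every pair of columns $i<j$ and every row $r\le\lambda^t_j$ one must have $\mathrm{sign}(a_{r,i}-a_{r,j})=(-1)^{\lambda^t_j-r}$. Combined with column-strictness $a_{r,c}<a_{r+1,c}$, these relations define a partial order on the cells of $\lambda$, and a filling attains the maximum degree if and only if its entries form a linear extension. It therefore suffices to prove that this poset is a total order.

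The main obstacle is the totality claim. My plan has two steps. First, inside each row $r$ the induced pairwise order is transitive: for any triple $i<j<k$ of columns appearing in row $r$, the signs of the pairs $(i,k)$ and $(j,k)$ both equal $(-1)^{\lambda^t_k-r}$, ruling out $3$-cycles, so each row supports a total order. Second, I would show that the column $c^*_r$ carrying the maximum of row $r$ coincides with the column $c_*^{r+1}$ carrying the minimum of row $r+1$: both are characterised as the largest index $c\le\lambda_{r+1}$ with $\lambda^t_c\not\equiv r\pmod{2}$. The key observation is that the extra columns of row $r$ with $\lambda_{r+1}<c\le\lambda_r$ satisfy $\lambda^t_c=r$, so they have the ``wrong'' parity to be the maximum of row $r$ but the ``right'' parity to sit to the right of the maximum; hence they do not compete with cells in $\{1,\dots,\lambda_{r+1}\}$, and the two selections coincide. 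The column-strict inequality $a_{r,c^*_r}<a_{r+1,c^*_r}$ then forces every cell of row $r$ to precede every cell of row $r+1$ in the poset, and chaining the per-row total orders produces a single total chain through all cells of $\lambda$. The unique filling by $\{1,\dots,n\}$ realising this chain achieves the maximum, giving $\dim_q K_\lambda=q^{\sum_i\binom{\lambda_i}{2}}+\text{(lower terms)}$.
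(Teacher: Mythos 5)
Your proof is correct and arrives at the same extremal filling, but the uniqueness argument is a genuinely different route from the paper's. The two upper-bound counts are essentially the same quantity organized differently: you bound each column-pair contribution $k(\sigma,\tau)$ by the length of the shorter (right) column and sum to get $\sum_j(j-1)\lambda^t_j=\sum_i\binom{\lambda_i}{2}$; the paper attributes each sign change to a single pair of cells in a common row and counts the pairs, yielding the same number. For uniqueness the paper reads off from the three cases displayed in~\eqref{eq::cells::pairs} that, at maximal degree, every entry of row $k$ is smaller than every entry of row $k+1$, so the rows are filled by consecutive blocks, and then propagates the filling recursively from the last row upward (the ordering in row $k$ being opposite to that of row $k+1$ on shared columns). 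You instead make the alternating-sign constraint explicit, $\mathrm{sign}(a_{r,i}-a_{r,j})=(-1)^{\lambda^t_j-r}$, observe that the induced tournament within a row has no $3$-cycles because both edges into the right-most vertex of any triple carry the same orientation, and then bridge consecutive rows by identifying the argmax column of row $r$ with the argmin column of row $r+1$ so that column-strictness forces the row separation. Both arguments work; yours has the advantage of making the consistency (hence existence) of the extremal filling entirely transparent via the total-order/linear-extension formulation, whereas the paper's recursion is more compact but leaves the reader to extract what the cases in~\eqref{eq::cells::pairs} actually impose. One small point worth tightening: your stated characterization ``the largest $c\le\lambda_{r+1}$ with $\lambda^t_c\not\equiv r\pmod 2$'' may have no solutions; in that situation both the argmax of row $r$ and the argmin of row $r+1$ default to $c=1$, so the coincidence (and the conclusion) still holds, but the fallback case should be spelled out.
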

\begin{proof}
Let us find the upper bound on the $q$-degree of the Kato module. In terms of filled diagrams we have to find the upper bound
of the number of sign changes in the snake rule. Note that each pair of cells in one row may produce at most one sign change assigned to the snake rule for the corresponding columns. Therefore, the $q$-degree is bounded from above by the number of pairs of cells in one row,
which is equal  to the degree of the right-hand side of~\eqref{eq::deg:q:K_la}, denoted by $d(\lambda):=\sum_i \binom{\lambda_i}{2}$.
It remains to explain that there exists a unique filling of this degree $d(\lambda)$ of a diagram $\lambda$.

Suppose $T$ is a filling of a Young diagram $\lambda\vdash n$ by integers $\{1,\ldots,n\}$. Recall that the filling increases
downstairs in each column. Consider a pair of cells located on the intersection of the  $s$'th and $t$'th columns with
the $k$'th row ($s<t$). This pair of cells will affect the degree of the filling if and only if one has the inequality (sign) change for this pair of cells and the two corresponding cells below.
The inequality change may happen in one of the following possible cases:
\begin{equation}
\label{eq::cells::pairs}
\begin{array}{ccccc}
{
\begin{tikzpicture}[scale=0.85]
\node (v00) at (-.5,.5) {\tiny{$a_{ks}$}};
\node (v10) at  (-.5,-.5) {\tiny{$a_{k+1 s}$}};
\node (v01) at (0.5,.5) {$<$};
\node (v11) at (0.5,-.5) {$>$};
\draw[step=1cm] (-1,-1) grid (0,1);
\node (v02) at (1.5,.5) {\tiny{$a_{kt}$}};
\node (v12) at  (1.5,-.5) {\tiny{$a_{k+1 t}$}};
\draw[step=1cm] (1,-1) grid (2,1);
\end{tikzpicture}
} &  &
{
	\begin{tikzpicture}[scale=0.85]
	\node (v00) at (-.5,.5) {\tiny{$a_{ks}$}};
	\node (v10) at  (-.5,-.5) {\tiny{$a_{k+1 s}$}};
	\node (v01) at (0.5,.5) {$>$};
	\node (v11) at (0.5,-.5) {$<$};
	\draw[step=1cm] (-1,-1) grid (0,1);
	\node (v02) at (1.5,.5) {\tiny{$a_{kt}$}};
	\node (v12) at  (1.5,-.5) {\tiny{$a_{k+1 t}$}};
	\draw[step=1cm] (1,-1) grid (2,1);
	\end{tikzpicture}
} &  &
{
	\begin{tikzpicture}[scale=0.85]
	\node (v00) at (-.5,.5) {\tiny{$a_{ks}$}};
	\node (v10) at  (-.5,-.5) {\tiny{$a_{k+1 s}$}};
	\node (v01) at (0.5,.5) {$>$};
%	\node (v11) at (0.5,-.5) {$<$};
	\draw[step=1cm] (-1,-1) grid (0,1);
	\node (v02) at (1.5,.5) {\tiny{$a_{kt}$}};
%	\node (v12) at  (1.5,-.5) {\tiny{$a_{k+1 t}$}};
	\draw[step=1cm] (1,0) grid (2,1);
	\end{tikzpicture}
} \\
a_{ks}<a_{kt}<a_{k+1 t} <a_{k+1 t} & , &
a_{kt}<a_{ks}<a_{k+1 s} <a_{k+1 t} & , &
a_{kt}<a_{ks}<a_{k+1 s}.
\end{array}
\end{equation}
The rightmost case corresponds to the inequality $\lambda_{k+1}<t$ when there is no cell below $a_{kt}$.
In particular, one can see that all integers in the $k$'th row of $T$ are strictly less than integers that appear in the $k+1$'st row. Consequently, one has to fill the first row with integers $\{1,\ldots,\lambda_1\}$, integers
$\{\lambda_1+1,\ldots,\lambda_1+\lambda_2\}$ have to be placed in the second row, and so on. The filling in the last row has
to be the reverse lexicographical, and, moreover, the restriction~\eqref{eq::cells::pairs} uniquely defines the ordering of the filling of the $k$'th row whenever the filling of the $k+1$'st row is given. Thus, there exists a unique filling of the Young diagram of the top degree.
\end{proof}	

\begin{lem}
\label{lem::Sgn::Kato}
The multiplicity of the Sign $\mathfrak{S}_n$-representation in the local Kato module $K_\lambda$%, considered as $\mathfrak{S}_n$-representation
 is equal to $1$.
Moreover, the Sign representation appears in $K_\la$ in the top graded component.
\end{lem}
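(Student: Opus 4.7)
My plan is to separate the statement into the multiplicity assertion and the location assertion, and to prove each using the tools already developed.

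For the multiplicity, I invoke the corollary just above the lemma, which identifies $K_\lambda$ as an ungraded $\mathfrak{S}_n$-module with $\mathrm{Ind}_{\mathfrak{S}_{\lambda^t}}^{\mathfrak{S}_n}\mathrm{Sgn}$. The restriction of the sign character of $\mathfrak{S}_n$ to the Young subgroup $\mathfrak{S}_{\lambda^t} = \mathfrak{S}_{\lambda_1^t}\times\mathfrak{S}_{\lambda_2^t}\times\cdots$ coincides with the sign character of that subgroup, since the sign of a permutation that preserves the blocks factors as the product of the signs on each block. Frobenius reciprocity therefore yields
\[
\dim\mathrm{Hom}_{\mathfrak{S}_n}(\mathrm{Sgn}, K_\lambda) = \dim\mathrm{Hom}_{\mathfrak{S}_{\lambda^t}}(\mathrm{Sgn}, \mathrm{Sgn}) = 1.
\]
An identical reciprocity computation shows that the trivial $\mathfrak{S}_n$-representation appears in $K_\lambda$ with multiplicity $\dim\mathrm{Hom}_{\mathfrak{S}_{\lambda^t}}(\mathrm{Triv}, \mathrm{Sgn})$, which equals $1$ when $\mathfrak{S}_{\lambda^t}$ is trivial (i.e.\ $\lambda = (n)$) and $0$ otherwise.

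For the location of this unique copy of the sign representation, I combine Lemma \ref{lem::K_la::top}, which guarantees that the top graded component of $K_\lambda$ is one-dimensional, with the multiplicities computed above. A one-dimensional $\mathfrak{S}_n$-module is either trivial or sign; when $\lambda \ne (n)$ the trivial representation does not appear in $K_\lambda$ at all, so the top piece must be the sign, and by the multiplicity bound this is the only graded piece carrying the sign. It remains to handle the case $\lambda = (n)$, where the trivial representation is present exactly once. Here I argue that the unique trivial copy sits in the bottom graded component: the local Weyl module $W_{(n)}$ contains $V_{(n)} \simeq S^n V$ in $q$-degree zero (generated by the highest-weight vector $w_{(n)}$ under $\mathfrak{sl}_n\otimes 1$), whose $(1^n)$-weight line is spanned by the fully symmetrised tensor $\sum_{\sigma\in\mathfrak{S}_n} v_{\sigma(1)}\otimes\cdots\otimes v_{\sigma(n)}$. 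Tracing through the Schur--Weyl identification \eqref{eq::SW::currents} and the explicit permutation action \eqref{eq::morph::proj}, $\mathfrak{S}_n$ fixes this vector, so the unique trivial copy lies in $q$-degree zero and the one-dimensional top component has no choice but to carry the sign representation.

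The only delicate step is the $\lambda = (n)$ case: confirming that the bottom graded component of $W_{(n)}$ is exactly $S^n V$ and that the tensor-factor permutation action is trivial on its symmetric $(1^n)$-weight vector. Both are standard features of the local Weyl module and the Schur--Weyl equivalence, so no serious obstacle is expected; for $\lambda \ne (n)$ the argument reduces to a Frobenius-reciprocity count combined with the one-dimensionality of the top established in Lemma \ref{lem::K_la::top}.
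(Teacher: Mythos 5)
Your proof is correct and follows essentially the same route as the paper: Frobenius reciprocity for the sign and trivial multiplicities, combined with Lemma \ref{lem::K_la::top} to identify the one-dimensional top component, and a separate check that the unique trivial copy in $K_{(n)}$ sits in $q$-degree zero. The only difference is cosmetic: you spell out the degree-zero placement of the trivial via the symmetrised tensor in $S^n V$ under the Schur--Weyl identification, which the paper simply asserts (note a small slip: $v_{(n)}$ is a highest-weight vector for $\mathfrak{sl}_m\otimes 1$, i.e.\ $\mathfrak{gl}_V$, not $\mathfrak{sl}_n$).
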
	
\begin{proof}
The Frobenius reciprocity implies
\[\dim \Hom_{\mathfrak{S}_n}(K_{\la},Sgn_n) =
\dim \Hom_{\mathfrak{S}_n}(Ind_{\mathfrak{S}_{\la^t}}^{\mathfrak{S}_n} Sgn, Sgn_n) = \dim \Hom_{\mathfrak{S}_{\la^t}}(Sgn,Sgn) = 1.
\]	
Similarly, the multiplicity of the trivial representation in $K_{\la}$ is equal to $0$ if $\lambda\neq n\varepsilon_1$ and is equal
to $1$ if $\lambda$ is a partition with exactly one row. But the trivial representation corresponds to the maximal partition
with respect to the partial ordering. In the latter case the trivial representation corresponds to the generator of the Kato module
$K_{(n)}$ and $q$-degree is equal to $0$.
Therefore, there exists almost unique one-dimensional $\mathfrak{S}_n$-subrepresentation of $K_{\la}$ that appears in a nonzero $q$-degree.
 On the other hand Lemma~\ref{lem::K_la::top} explains that the top-dimensional component of $K_{\la}$ is one-dimensional, finalizing
the proof of Lemma~\ref{lem::Sgn::Kato}.
\end{proof}	

\begin{thm}
\label{thm::Lambda::SW}
The $\gl_V[t]$ module $\Lambda^n(V[t])$ admits a filtration by global Weyl modules with the following graded multiplicities
 $[\Lambda^n(V[t]):\bW_\la] = q^{\sum_j\binom{\lambda_j}{2}}.$
\end{thm}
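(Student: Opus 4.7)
The plan is to deduce this from Theorem~\ref{Schur-Weylcurrent} by passing to the sign-isotypic component of the $\mathfrak{S}_n$-action on $V[t]^{\otimes n}$, together with Lemmas~\ref{lem::K_la::top} and~\ref{lem::Sgn::Kato}. Recall that $\Lambda^n(V[t])$ is precisely the Sgn-isotypic component of $V[t]^{\otimes n}$ with respect to the $\mathfrak{S}_n \subset \bK[\mathfrak{S}_n] \ltimes \bK[t_1,\ldots,t_n]$ action (place permutation on the tensor factors). Since taking the isotypic component under a finite group in characteristic zero is an exact functor and commutes with graded direct sums, applying it to the filtration $\{F_\lambda\}$ from Theorem~\ref{Schur-Weylcurrent} produces a filtration of $\Lambda^n(V[t])$ whose successive subquotients are the Sgn-isotypic components of $\bW_\lambda \otimes_{\A_\lambda} \mathbb{K}_\lambda$.

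Next, I would identify the Sgn-isotypic component of each subquotient. Because $\A_\lambda$ acts on the left factor and commutes with the $\mathfrak{S}_n$-action on the right factor, and because $\mathfrak{S}_n$ acts only through $\mathbb{K}_\lambda$, we have
\[
\bigl(\bW_\lambda \otimes_{\A_\lambda} \mathbb{K}_\lambda\bigr)^{\mathrm{Sgn}} \;\simeq\; \bW_\lambda \otimes_{\A_\lambda} (\mathbb{K}_\lambda)^{\mathrm{Sgn}}.
\]
By item~\eqref{item::free} of the corollary following Theorem~\ref{thm::SW::currents}, $\mathbb{K}_\lambda$ is free over $\A_\lambda$ with a graded set of generators isomorphic to $K_\lambda$; therefore $(\mathbb{K}_\lambda)^{\mathrm{Sgn}} \simeq \A_\lambda \otimes (K_\lambda)^{\mathrm{Sgn}}$ as graded $\A_\lambda$-modules. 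By Lemma~\ref{lem::Sgn::Kato}, $(K_\lambda)^{\mathrm{Sgn}}$ is one-dimensional, and by Lemma~\ref{lem::K_la::top} this one-dimensional space lives in $q$-degree $\sum_j \binom{\lambda_j}{2}$ (which is the unique top graded component of $K_\lambda$ by the computation of its graded dimension). Hence $(\mathbb{K}_\lambda)^{\mathrm{Sgn}} \simeq \A_\lambda[\sum_j \binom{\lambda_j}{2}]$ as graded $\A_\lambda$-modules.

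Substituting back,
\[
\bigl(\bW_\lambda \otimes_{\A_\lambda} \mathbb{K}_\lambda\bigr)^{\mathrm{Sgn}} \;\simeq\; \bW_\lambda \otimes_{\A_\lambda} \A_\lambda\bigl[\textstyle\sum_j \binom{\lambda_j}{2}\bigr] \;\simeq\; \bW_\lambda\bigl[\textstyle\sum_j \binom{\lambda_j}{2}\bigr].
\]
This produces the desired filtration of $\Lambda^n(V[t])$ by global Weyl modules with the claimed graded multiplicities $[\Lambda^n(V[t]):\bW_\lambda] = q^{\sum_j \binom{\lambda_j}{2}}$.

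The main subtle point is the identification of $(\mathbb{K}_\lambda)^{\mathrm{Sgn}}$ as a graded free $\A_\lambda$-module of rank one concentrated in the correct degree — this requires that the Sgn-copy in $K_\lambda$ sits precisely in the top graded piece (Lemmas~\ref{lem::K_la::top},~\ref{lem::Sgn::Kato}) and that the $\A_\lambda$-module structure is compatible with this identification, which is guaranteed because $\A_\lambda$ acts by $\mathfrak{S}_n$-equivariant endomorphisms of $\mathbb{K}_\lambda$, so it preserves each isotypic component freely. As a sanity check, one may verify the character identity by comparing $\ch_q \Lambda^n(V[t])$ with $\sum_\lambda q^{\sum_j \binom{\lambda_j}{2}} \ch_q \bW_\lambda$, which is the Sgn-isotypic specialization of the Cauchy-type identity~\eqref{CI}.
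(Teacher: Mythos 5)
Your proof is correct and follows essentially the same route as the paper's: realize $\Lambda^n(V[t])$ as the $\mathrm{Sgn}$-isotypic component of $V[t]^{\otimes n}$, intersect with the filtration from Theorem~\ref{Schur-Weylcurrent}, and identify the subquotients using the fact that $\mathrm{Sgn}$ appears in $K_\lambda$ with multiplicity one, concentrated in degree $\sum_j\binom{\lambda_j}{2}$ (Lemmas~\ref{lem::K_la::top} and~\ref{lem::Sgn::Kato}). The paper leaves the step from $(\mathbb{K}_\lambda)^{\mathrm{Sgn}}$ to a free rank-one graded $\A_\lambda$-module implicit; you spell it out via $\A_\lambda$-freeness of $\mathbb{K}_\lambda$ and $\mathfrak{S}_n$-equivariance of the $\A_\lambda$-action, which is exactly the right justification.
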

\begin{proof}
The $\gl_V[t]$-module $\Lambda^n(V[t])\simeq \Hom_{\mathfrak{S}_n}\left( Sgn_n,V[t]^{\otimes n}\right)$ is a direct summand of
$V[t]^{\otimes n}$. Consequently, it inherits the filtration by global Weyl modules discovered in Theorem~\ref{Schur-Weylcurrent}. The multiplicity of the global Weyl module $\bW_\la$  in $\Lambda^n(V[t])$ is equal to the multiplicity of the sign representation in the local Kato module $K_\lambda$. The latter was computed in Lemma~\ref{lem::Sgn::Kato} using combinatorics of the basis of $K_\lambda$.
\end{proof}	
The graded characters of modules considered in Theorem~\ref{thm::Lambda::SW} gives the following equality, which is the
specialization of the identity VI.5.4 of~\cite{M}.
\begin{cor}
The following identity for $q$-Whittaker functions holds:
\[
\prod_{\substack{k \geq 0\\ i=1,\dots,n}}(1+x_iq^k)=
\sum_{\lambda}\frac{q^{d(\lambda)}}{(q)_{\lambda}}p_{\lambda}(x,q), \text{ where } d(\la) = \sum_j\frac{\lambda_j(\lambda_j-1)}{2}.
\]
\end{cor}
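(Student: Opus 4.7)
The plan is to read both sides as graded characters and derive the identity directly from Theorem~\ref{thm::Lambda::SW}.

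First, I would interpret the left-hand side $\prod_{k\geq 0,\, i=1,\dots,n}(1+x_i q^k)$ as the bigraded character of the total exterior algebra $\Lambda^{\bullet}(V[t])$ with $V=\mathbb{C}^n$, where the variables $x_i$ record the $\fh$-weight and $q$ records the loop-rotation degree. Since the $m$-th exterior power $\Lambda^m(V[t])$ contributes precisely the terms of total $x$-degree $m$, these contributions are disjoint in the $x$-grading, and the product expands as $\sum_{m\geq 0}\ch_q \Lambda^m(V[t])$ as a formal series in the $x_i$ and $q$.

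Next, for each fixed $m$ I would invoke Theorem~\ref{thm::Lambda::SW} to rewrite
\[
\ch_q \Lambda^m(V[t]) \;=\; \sum_{\lambda\vdash m} q^{d(\lambda)} \ch_q \bW_\lambda.
\]
Substituting the graded character formula for global Weyl modules in type $A$, namely
\[
\ch_q \bW_\lambda \;=\; \ch_q W_\lambda \cdot \prod_i (q)^{-1}_{\lambda_i-\lambda_{i+1}} \;=\; p_\lambda(x,q)/(q)_\lambda,
\]
which combines the known identification $\ch_q W_\lambda = p_\lambda(x,q)$ with the fact that $\A_\lambda$ acts freely on $\bW_\lambda$ with quotient $W_\lambda$, and then summing over all $m$, the right-hand side of the claimed identity appears immediately.

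The only point requiring minor bookkeeping is that Theorem~\ref{thm::Lambda::SW} is applied with $V$ of dimension strictly greater than the given exterior power. I would handle this by first working over a $V$ of sufficiently large dimension $N\gg m$ for each relevant $m$, obtaining the identity in $N$ variables, and then specializing $x_{n+1}=\cdots=x_N=0$; since $p_\lambda(x_1,\dots,x_n;q)$ vanishes whenever $l(\lambda)>n$, this specialization collapses the sum to partitions with at most $n$ parts and produces the $n$-variable identity as stated. No step should present a genuine obstacle beyond this packaging, as the main representation-theoretic content has already been extracted in Theorem~\ref{thm::Lambda::SW}.
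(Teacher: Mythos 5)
Your proof is correct and follows the same route as the paper: take graded characters in Theorem~\ref{thm::Lambda::SW}, substitute $\ch_q\bW_\lambda = p_\lambda(x,q)/(q)_\lambda$, and sum over exterior degrees. The one thing you add that the paper glosses over is the stabilization argument (working with $\dim V = N \gg m$ and then setting $x_{n+1}=\cdots=x_N=0$) to reconcile the hypothesis $\dim V > n$ in Theorem~\ref{thm::Lambda::SW} with the fact that the exterior degree $m$ ranges over all nonnegative integers; this bookkeeping is genuinely needed and you handle it correctly.
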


\section*{Acknowledgments}
We are grateful to Alexei Borodin, Alexander Braverman, Boris Feigin, Michael Finkelberg, Syu Kato and Shrawan Kumar
for useful discussions and correspondence.
The research of E.F. was supported by the grant RSF 19-11-00056.
A.Kh. research has been funded by the Russian Academic Excellence Project '5-100',
%Results of  Section~\S\ref{sec::Rational::MONR::all}, (in particular, Theorem~\ref{thm::HICG}) have been obtained under support of the RSF grant No.19-11-00275.
A.Kh. is a Young Russian Mathematics award winner and would like to thank its sponsors and jury.
The work of Ie.M. was supported by Japan Society for the Promotion of Science.


\begin{thebibliography}{99}
\bibitem[B]{B}
A. Borel, {\it Linear algebraic groups}, Springer, 1969.

\bibitem[BBCKL]{BBCKL}
M.~Bennett, A.~Berenstein, V.~Chari, A.~Khoroshkin, S.~Loktev,
{\it Macdonald polynomials and BGG reciprocity for current algebras}, Selecta Math. (N.S.) 20 (2014), no. 2, 585--607.

\bibitem[BC]{BC}
A.~Borodin, I.~Corwin. {\it Macdonald processes}, Probability Theory and Related Fields,
158(1--2): 225--400, 2014.

\bibitem[BF1]{BF1}
A.Braverman, M.Finkelberg, {\it Weyl modules and $q$-Whittaker functions}, Math. Ann., vol. 359 (1),
2014, pp. 45--59.

\bibitem[BF2]{BF2}
A.Braverman, M.Finkelberg, {\it Twisted zastava and $q$-Whittaker functions},
J. Lond. Math. Soc. (2) 96 (2017), no. 2, 309--325.

\bibitem[BF3]{BF3} A.Braverman, M.Finkelberg, {\it Semi-infinite Schubert varieties and quantum K-theory of flag manifolds},
J. Amer. Math. Soc. 27(2014), no. 4, 1147--1168.

\bibitem[BG]{BG}
A. Braverman and D. Gaitsgory, {\it Geometric Eisenstein series}, Invent.  Math. 150 (2002), 287--384.

\bibitem[BKMcN]{BKM}
J.~Brundan, A.~Kleshchev, P.\,J.~McNamara, {\em Homological properties of finite-type Khovanov–Lauda–Rouquier algebras},
Duke Mathematical Journal 163.7 (2014): 1353--1404.


\bibitem[BP]{BP}
A.~Borodin, L.~Petrov, {\em Integrable probability: From representation theory to Macdonald processes},
Probability Surveys, 11:1--58, 2014.

\bibitem[BW]{BW}
A.~Borodin, M.~Wheeler, {\em Spin q-Whittaker polynomials}, arXiv:1701.06292.

\bibitem[C]{C}
I.~Cherednik, {\it Whittaker limits of difference spherical  functions}, Int.Math.Res. Notices, no. 20 (2009), 3793--3842.

\bibitem[CFK]{CFK}
V.~Chari, G.~Fourier, and T.~Khandai, {\it A categorical approach to Weyl modules},
Transform. Groups, 15(3):517--549, 2010.

\bibitem[CI]{CI}
V.Chari, B.Ion, {\it BGG reciprocity for current algebras}, Compos. Math. 151 (2015), pp. 1265--1287.

\bibitem [CL]{CL}
{V.~Chari}, {S.~Loktev},
{\it Weyl, Demazure and fusion modules for
the current algebra of $\msl_{r+1}$}, Adv. Math. 207 (2006), 928--960.

\bibitem [CP]{CP}
{V.~Chari}, {A.~Pressley},
{\it Weyl Modules for Classical and Quantum Affine Algebras}, Represent. Theory 5 (2001),
191–223.

\bibitem[CPS]{CPS}
E.~Cline, B.~Parshall, L.~Scott, {\it Finite dimensional algebras and highest weight categories},
J. reine angew. Math, 391(85--99), 3 (1988).

\bibitem[E]{E}
P.~Etingof, {\em Whittaker functions on quantum groups and $q$-deformed Toda operators},
Translations of the American Mathematical Society, Series 2, 194: 9--26, 1999.

\bibitem[FL1]{FL1} G.~Fourier, P.~Littelmann,
{\it Tensor product structure of affine Demazure modules and limit constructions},
Nagoya Math. J. 182 (2006), 171--198.


\bibitem[FL2]{FL2} G.~Fourier, P.~Littelmann,
{\it Weyl modules, Demazure modules, KR-modules, crystals, fusion products and limit constructions},
Adv. Math. 211 (2007), no. 2, 566--593.

\bibitem[FNO]{FMO}
E.~Feigin, I.~Makedonskyi, D.~Orr, {\it Generalized Weyl modules and nonsymmetric q-Whittaker functions},
Adv. Math. 330 (2018), 997--1033.

\bibitem[FiMi]{FiMi}
M.Finkelberg, I.Mirkovii\'c, {\it Semi-infinite flags I. Case of global curve $\bP^1$}.
In Differential topology, infinite-dimensional Lie algebras, and applications, volume 194 of
Amer. Math. Soc. Transl. Ser. 2, pages 81--112. Amer. Math. Soc., Providence, RI, 1999.

\bibitem[FeMa1]{FeMa1}
E. Feigin, I. Makedonskyi, {\it Vertex algebras and coordinate rings of semi-infinite flags},
Commun. Math. Phys.  (2019) 369: 221.

\bibitem[FeMa2]{FeMa2}
E. Feigin, I. Makedonskyi, {\it Semi-infinite Plücker relations and Weyl modules},
International Mathematics Research Notices, rny121, 2019.

\bibitem[F]{F}
W.~Fulton, {\it Young Tableaux, with Applications to Representation Theory and Geometry.}
Cambridge University Press, 1997.


\bibitem[GLO1]{GLO1}
A.~Gerasimov, D.~Lebedev, S.~Oblezin, {\em On $q$-deformed ${\mathfrak{gl}}_{l+1}$-Whittaker functions I,II,III},
Comm. Math. Phys., 294:97--119, 121--143, 2010.

\bibitem[GLO2]{GLO2}
A.~Gerasimov, D.~Lebedev, S.~Oblezin, {\em On a classical limit of $q$-deformed Whittaker functions},
Lett. Math. Phys., 100: 279--290, 2012.


\bibitem[GW]{GW} R.~Goodman, N.~Wallach, {\it Symmetry, Representations, and Invariants}.

\bibitem[H]{Howe} R.~Howe, {\em Perspectives on invariant theory: Schur duality, multiplicity-free actions and beyond.}
The Schur lectures (1992) (Tel Aviv), 1–182, Israel Math. Conf. Proc., 8, Bar-Ilan Univ., Ramat Gan, 1995

\bibitem[I]{I} B.~Ion, {\it Nonsymmetric  Macdonald  polynomials  and  Demazure  characters},
Duke Math. J.116(2003), no. 2, 299--318.


\bibitem[Kat1]{Kat}
S.~Kato, {\it Demazure character formula for semi-infinite flag manifolds}, Math. Ann., 2018, 1-33.

\bibitem[Kat2]{Kat2}
S.~Kato, {\it An algebraic study of extension algebras},
American Journal of Mathematics 139(3).


\bibitem[KNS]{KNS}
S.~Kato, S.~Naito, D.~Sagaki, {\it Pieri-Chevalley type formula for equivariant $K$-theory of semi-infinite flag manifolds},
arXiv:1702.02408

\bibitem[Kum1]{Kum1}
S.~Kumar, {\it Kac-Moody groups, their flag varieties and representation theory},
Progr. Math., 204. Birkh{\" a}user Boston, Inc., Boston, MA, 2002.


\bibitem[Kum2]{Kum2}
S.~Kumar, {\it Conformal Blocks, Generalized Theta Functions and Verlinde Formula}, Chapter 1, in preparation.

\bibitem[Kh]{Kh}
A.~Khoroshkin, {\em Highest weight categories and Macdonald polynomials}, arXiv:1312.7053.

\bibitem[Kn]{Kn}
A.~Knapp, {\em Lie Groups Beyond an Introduction}, 2002, Progress in Mathematics, 140 (2nd ed.), Boston: Birkh\"auser.

\bibitem[M]{M}
I.~Macdonald, {\em Symmetric functions and Hall polynomials}, 2nd ed. 1995.

\bibitem[Mo]{Mo}
K.Morita, {\em Duality for modules and its applications to the theory of rings with minimum condition},
Science reports of the Tokyo Kyoiku Daigaku. Section A. 6 (150)(1958): 83--142.


\bibitem[N]{N}
K.Naoi, {\it Weyl modules, Demazure modules and finite crystals for non-simply laced type},
Adv. Math. 229 (2012), no. 2, 875--934.

\bibitem[Ok]{Ok}
A.~Okounkov, {\it Infinite wedge and random partitions}, Selecta Math. (N.S.), 7(1):57--81, 2001.

\bibitem[Oo]{Oo}
F.~Oort, {\it Algebraic group schemes in characteristic zero are reduced}, Invent. Math. 2 (1966), 79–80.

\bibitem[OR]{OR}
A.~Okounkov, N.~Reshetikhin, {\it Correlation function of Schur process with application tolocal geometry of a random
3-dimensional Young diagram}, J. Amer. Math. Soc., 16(3):581--603 (electronic), 2003.

\bibitem[PW]{PW}
F.~Peter, H.~Weyl, {\em Die Vollständigkeit der primitiven Darstellungen einer geschlossenen kontinuierlichen Gruppe},
Math. Ann., 97: 737–755, 1927.

\bibitem[R]{R}
S.~Ruijsenaars, {\em Relativistic Toda systems}, Communications in Mathematical Physics,
133(2):217--247, 1990.

\bibitem[S]{S}
Y.~Sanderson, {\em On the Connection Between Macdonald Polynomials and Demazure Characters},
J. of Algebraic Combinatorics, {11} (2000), 269--275.

\bibitem[Sch]{Schur}
I.~Schur {\em \"Uber die rationalen Darstellungen der allgemeinen linearen Gruppe},
Sitzungsberichte Akad. Berlin 1927, 58--75 (1927).


\bibitem[TY]{TY}
P.Tauvel, R.W.T.Yu, {\em Lie algebras and algebraic groups}, Springer, 2009.

\bibitem[W]{HW_invariants}
H.~Weyl, {\em The Classical Groups. Their Invariants and Representations}, Princeton University Press, Princeton, N.J., 1939


\end{thebibliography}
\end{document}